\documentclass[11pt]{amsart}
\usepackage[utf8]{inputenc}
\usepackage[margin=2.7cm]{geometry}
\usepackage{graphicx,xcolor}
\usepackage{amscd,amsmath,amsfonts,amssymb,amsthm}

\newcommand{\N}{{\mathbb N}}
\newcommand{\R}{{\mathbb R}}
\newcommand{\W}{H_0^1(\Omega)}

\newtheorem{theorem}{Theorem}
\newtheorem{lemma}[theorem]{Lemma}
\newtheorem{proposition}[theorem]{Proposition}

\numberwithin{equation}{section}

\begin{document}

\title[]{Existence of solutions to four-dimensional Kirchhoff problems with critical-concave nonlinearities}

\author{Giovanni Anello}
\address[G. Anello]{Department of Mathematical and Computer Sciences, Physical Sciences and Earth Sciences\\
University of Messina\\
Viale F. Stagno d’Alcontres, 31 - 98166 Messina, Italy}
\email{\tt ganello@unime.it}

\author{Luca Vilasi$^\dagger$}
\address[L. Vilasi]{Department of Mathematical and Computer Sciences, Physical Sciences and Earth Sciences\\
University of Messina\\
Viale F. Stagno d’Alcontres, 31 - 98166 Messina, Italy}
\email{\tt lvilasi@unime.it}

\keywords{Kirchhoff problem; Brezis-Nirenberg problem; dimension four; existence; critical-concave nonlinearity\\
\phantom{aa} 2020 AMS Subject Classification: 35J20, 35J25, 35J61\\
\phantom{aa} $^\dagger$Corresponding author
}

\begin{abstract}
We deal with a Kirchhoff problem on a smooth bounded domain of $\R^4$ with competing critical and concave terms. By using new approximation techniques and the Nehari manifold analysis,
we derive several existence results, complementing earlier ones obtained in the critical-convex case. Compared to similar results in the literature, we provide explicit bounds of the range of the parameters leading to the existence of solutions.
\end{abstract}

\maketitle

%%%%%%%%%%%%%%%%%%%%%%%%%%%%%%%%%%%%%%%%
\section{Introduction}
In this paper we analyze the following nonlocal elliptic problem,
\begin{equation}\tag{$P_{\lambda,q}$}\label{problem}
        \left\{\begin{array}{ll}
            \displaystyle{-\left( a+b\left(\int_{\Omega}|\nabla u|^2\, dx\right)^{\frac{q-2}{2}}\right)\Delta u= u^{q-1}+\lambda u^{p-1}} & \text{ in } \Omega, \smallskip\\
            u> 0 & \text{ in } \Omega, \smallskip\\
            u=0 & \text{ on } \partial\Omega,
        \end{array}\right.
\end{equation}
where $\Omega\subset \mathbb{R}^4$ is a smooth bounded domain, $a,b,\lambda$ are positive parameters, $p\in(1,2)$, $q\in (2,4]$. This type of problems, in which the leading operator is coupled with an integral coefficient, historically dates back to the work of G. Kirchhoff \cite{kir1883vorlesungen}, who first proposed the equation
\begin{equation}\label{kirchhoff}
    \varrho \frac{\partial^2 u}{\partial t^2} - \left( \frac{P_0}{h}+\frac{E}{2L}\int_0^L \left| \frac{\partial u}{\partial x}\right| ^2 dx\right) \frac{\partial^2 u}{\partial x^2} = 0
\end{equation}
as a nonlinear extension of d'Alembert's wave equation for free vibrations of elastic strings (in \eqref{kirchhoff}, $\varrho$ is the mass density, $u=u(x,t)$ the transverse string displacement at the space coordinate $x$ and time $t$, $P_0,h,E,L$ constants whose physical meaning is illustrated in \cite{av}).

Among all the directions of study of such problems, that of the critical growth in dimension four is particularly intriguing, due to the existence of an interaction between the higher order Kirchhoff term and the critical nonlinearity  (when $N=4$, the Sobolev critical exponent $2^*=2N/(N-2)$ equals 4). This obviously makes their variational treatment more difficult and motivates the growing interest in this case. Some interesting contributions along this direction come from \cite{f,lizhe2025existence,nai2014the,nai2014positive,naishi2020existence}, where the existence of solutions has been obtained by different variational techniques. Other papers related to \eqref{problem}, where, in the spirit of \cite{ambbrecer1994combined}, the effect of competing (weighted) concave-convex terms in producing solutions has been studied, are \cite{benmatlit2019multiplicity,chekuowu2011the}. 

In the recent \cite{av}, problem \eqref{problem} has been considered in the case $q=4$ and $p\in(2,4)$. The aim of this paper is to carry out the analysis in the presence of a concave perturbation $u^{p-1}$, $p\in(1,2)$, by means of the same methods as \cite{av}, i.e.,  a specific approximation procedure in combination with the Nehari manifold scheme. The strength of our approach, besides providing existence results via novel techniques, relies on the explicit computation of the threshold of the parameters for which solutions are obtained, unlike earlier results addressing the same topic (cf. \cite{f,naishi2020existence,yaomu2016multiplicity}). This fact is of great relevance also in relation to the (delicate) linear perturbation case $p=2$, which we have in mind  to consider as a limit case of the results obtained here and treat in a subsequent paper.

%Among the contributions mentioned above, we recall \cite{naishi2020existence}, based on the Nehari manifold method and the concentration-compactness analysis for Palais–Smale sequences, and \cite{yaomu2016multiplicity} where the Krasnoselskii genus theory is used to derive the existence of infinitely many solutions in the critical-concave

Before stating our results, we need to introduce first some notations.

Let $\left\|\cdot\right\|_m$,  $m\in[1,4]$, be the canonical $L^m$-norm on $\Omega$, $\left\|\cdot \right\|:=\left\| \nabla \cdot\right\|_2$ and
\begin{equation}\label{costanteimm}
c_m:=\sup_{u\in\W,\, \|u\|=1}\left\|u\right\|_m, \quad m\in[1,4].
\end{equation}
We also set $S:=c_4^{-2}$ and throughout the paper we assume that $$
0<b<S^{-2}.
$$
A simple argument shows that $c_q^q>b$ for every $q$ in a left neighborhood of $4$ (cf. Proposition \ref{ineqq} below). For these $q$'s and for every $p\in(1,2)$, let us define the following constants,
\begin{equation}\label{lambdapq}
    \Lambda_{p,q}:=\sup_{t>0}\left[\frac{at^2}{\sup_{\|u\|=t}\left(\|u\|_p^p+\|u\|_q^q\right)-bt^q}\right]^{\frac{q-p}{q-2}},
\end{equation}
\begin{equation}
    \Lambda_p:=\frac{2 a^\frac{4-p}{2}}{c_p^p(4-p)(1-bS^2)}\left(\frac{pS^2}{2-p}\right)^{\frac{2-p}{2}},
\end{equation}
\begin{equation}\label{lambdahatpq}
    \widehat{\Lambda}_{p,q}:=\frac{p}{2}\cdot\frac{d_{p,q}}{\widetilde{c}_{p,q}^p(c_q^q-b)^\frac{2-p}{q-2}},
\end{equation}
where %$d_{p,q}$ is defined in \eqref{dpq} and
\begin{equation}\label{dpq}
    d_{p,q}:= \displaystyle\left(\frac{a}{q-p}\right)^\frac{q-p}{q-2}(q-2)\left(2-p\right)^\frac{2-p}{q-2},
\end{equation}
\begin{equation*}
    \widetilde{c}_{p,q}:=\sup_{u\in\mathcal{B}_q}\|u\|_p,
\end{equation*}
%$\mathcal{B}_q$ being defined in \eqref{bq}.
\begin{equation}\label{bq}
    \mathcal{B}_q:=\{u\in \W: \|u\|=1 \ \text{ and }\ \|u\|_q^q>b\}
\end{equation}
(the consistency of these definitions is justified in the following section). We denote by
\begin{equation}
    I_{\lambda,q}(u):=\frac{a}{2}\left\| u\right\|^2 + \frac{b}{q}\left\| u\right\|^q - \frac{1}{q}\left\| u\right\|^q_q -\frac{\lambda}{p}\left\| u\right\|^p_p, \quad u\in\W,
\end{equation}
the $C^1$--energy functional corresponding to \eqref{problem}. By a weak solution to \eqref{problem} we mean any $u\in\W$, positive in $\Omega$, satisfying
$$
\left(a+b\left( \int_\Omega |\nabla u|^2 dx\right)^\frac{q-2}{2} %\left\| u\right\|^{q-2}
\right)\int_\Omega \nabla u\nabla v dx -\int_\Omega u^{q-1} v dx -\lambda\int_\Omega u^{p-1} vdx =0,
$$
for every $v\in\W$. Finally, letting $J_{\lambda,q}:\W\rightarrow \R$ be the functional
\begin{equation}
    J_{\lambda,q}(u):=I_{\lambda,q}'(u)(u)=a\left\| u\right\|^2 +b\left\| u\right\|^q - \left\| u\right\|^q_q -\lambda \left\| u\right\|^p_p, \quad u\in\W,
\end{equation}
we define the Nehari manifold $\mathcal{N}_{\lambda,q}$ corresponding to $I_{\lambda,q}$ and its subsets $\mathcal{N}_{\lambda,q}^+$, $\mathcal{N}_{\lambda,q}^-$ by
\begin{equation}
    \begin{split}
        \mathcal{N}_{\lambda,q} &:=\{u\in \W\setminus\{0\}: J_{\lambda,q}(u)=0\},\smallskip\\
        \mathcal{N}_{\lambda,q}^+ &:=\{u\in \mathcal{N}_{\lambda,q}:  J'_{\lambda,q}(u)(u)>0\},\smallskip\\
        \mathcal{N}_{\lambda,q}^- &:=\{u\in \mathcal{N}_{\lambda,q}: J'_{\lambda,q}(u)(u)<0\}.
    \end{split}
\end{equation}

\medskip

Our main results read as follows.

\begin{theorem}\label{subcrit}
    Assume $q\in(2,4)$ and $\lambda\in(0,\Lambda_{p,q})$. Then, problem \eqref{problem} has a solution $u_{\lambda,q}\in \mathcal{N}_{\lambda,q}^+$ such that
    \begin{equation}\label{infI}
        I_{\lambda,q}(u_{\lambda,q})=\inf_{\mathcal{N}_{\lambda,q}^+}I_{\lambda,q}.%=\tilde{I}_{\lambda,q}(u_{\lambda,q})=\inf_{\mathcal{A}_{\lambda,q}^*}\tilde{I}_{\lambda,q}<0.
    \end{equation}
\end{theorem}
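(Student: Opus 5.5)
We argue by minimizing $I_{\lambda,q}$ on $\mathcal{N}_{\lambda,q}^+$, using the fiber maps of Nehari theory. For $u\in\W$ with $\|u\|=1$ and $t>0$ set $\varphi_u(t):=I_{\lambda,q}(tu)$. Then
$$\varphi_u'(t)=t^{p-1}\Big(at^{2-p}-(\|u\|_q^q-b)t^{q-p}-\lambda\|u\|_p^p\Big),$$
and $tu\in\mathcal N_{\lambda,q}$ exactly when $\psi_u(t):=at^{2-p}-(\|u\|_q^q-b)t^{q-p}=\lambda\|u\|_p^p$.

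\emph{Step 1: fiber structure.} If $\|u\|_q^q\le b$, then $\psi_u$ is increasing. It has a single root $t^+(u)$, and $t^+(u)u\in\mathcal N^+_{\lambda,q}$. If $\|u\|_q^q>b$, then $\psi_u$ increases up to a maximum at $t_{\max}=\big(\frac{a(2-p)}{(q-p)(\|u\|_q^q-b)}\big)^{1/(q-2)}$ and decreases afterwards. Two roots $t^+<t_{\max}<t^-$ exist as long as $\lambda\|u\|_p^p<\psi_u(t_{\max})$.

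The role of $\Lambda_{p,q}$ is to guarantee that $\mathcal N^0_{\lambda,q}:=\{J'(u)(u)=0\}\cap\mathcal N_{\lambda,q}$ is empty. I would show the following. If $tu\in\mathcal N_{\lambda,q}$ with $J'(tu)(tu)=0$, then eliminating $\|u\|_q^q$ from the two equations yields a lower bound on $\lambda$ of the form
$$\lambda\ \ge\ \Big[\frac{at^2}{\sup_{\|v\|=t}(\|v\|_p^p+\|v\|_q^q)-bt^q}\Big]^{\frac{q-p}{q-2}}$$
at the appropriate $t$. The definition of $\Lambda_{p,q}$ as a supremum over $t$ is tailored to give this. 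Concretely, I would compare the system with a one-variable inequality: for each fixed $t$, either $J(v)>0$ for all $\|v\|=t$, which separates $\mathcal N^+$ from $\mathcal N^-$ by a sphere, or else $\lambda$ is large. So for $\lambda<\Lambda_{p,q}$ there is a radius $t_0$ with $J_{\lambda,q}>0$ on $\{\|v\|=t_0\}$. Hence $\mathcal N^+\subset\{\|u\|<t_0\}$, $\mathcal N^-\subset\{\|u\|>t_0\}$, and $\mathcal N^0=\emptyset$. Getting the exact constant $\Lambda_{p,q}$ out of this is the main bookkeeping obstacle. I would first try to verify directly that $\lambda<\Lambda_{p,q}$ gives some $t$ with $at^2-bt^q... >\lambda\sup(\dots)$ after scaling, matching the exponent $\frac{q-p}{q-2}$.

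\emph{Step 2: minimization.} Let $m^+:=\inf_{\mathcal N^+}I_{\lambda,q}$. On $\mathcal N^+$ we have $I_{\lambda,q}(tu)<0$, since $\varphi_u$ is decreasing near $0$ and $t^+$ is a local minimum. So $m^+<0$. Moreover $\mathcal N^+$ is bounded by Step 1, so $m^+>-\infty$ because $I$ is bounded below on bounded sets. Take a minimizing sequence $(u_n)$ with $u_n\ge0$; this is allowed because replacing $u$ by $|u|$ preserves everything. Since $q<4$, the embedding $\W\hookrightarrow L^q$ is compact, so up to a subsequence $u_n\rightharpoonup u$ weakly in $\W$, and $u_n\to u$ in $L^q$ and in $L^p$.

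Then $u\ne0$. Indeed, $I(u_n)\to m^+<0$, while the terms $\frac a2\|u_n\|^2+\frac bq\|u_n\|^q$ are nonnegative, so $\frac1q\|u\|_q^q+\frac\lambda p\|u\|_p^p>0$. Let $s:=t^+(u)$. If $u_n\not\to u$ strongly, then by weak lower semicontinuity $J(u)<\liminf J(u_n)=0$. Since $J(u)<0$ places $1$ strictly between the roots, this forces $t^+(u)<1$. Because $\varphi_{u_n}$ decreases on $(t^+(u_n)u_n\ldots)$, a comparison argument then gives
$$I(su)<\liminf I(su_n)\le\liminf I(u_n)=m^+,$$
which is a contradiction. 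Hence $u_n\to u$ strongly, $u\in\mathcal N^+$ since $\mathcal N^0=\emptyset$, and $I(u)=m^+$.

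\emph{Step 3: solution.} Since $\mathcal N^0=\emptyset$, $\mathcal N^+$ is a $C^1$ manifold, relatively open in $\mathcal N$. Lagrange multipliers give $I'(u)=\mu J'(u)$. Testing with $u$ yields $0=\mu J'(u)(u)$ with $J'(u)(u)>0$, so $\mu=0$. The function $u\ge0$ is then a weak solution; note that $u^{p-1}$ is not $C^1$, but the functional remains $C^1$ for $p>1$. Finally, $u>0$ follows from the strong maximum principle, since $-\Delta u\ge0$ with a positive coefficient.
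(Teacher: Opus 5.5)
\paragraph{Review.} Your route differs from the paper's. You minimize $I_{\lambda,q}$ directly on $\mathcal N^+_{\lambda,q}$ via fibering maps, whereas the paper minimizes the sequentially weakly continuous functional $\widetilde I_{\lambda,q}$ over $\mathcal A^*_{\lambda,q}$. Your route is viable, and Step~1 is correct in substance: the sphere separation you want is exactly Lemma~\ref{Jlambda}, which follows in one line from the scaling $v\mapsto\lambda^{1/(q-p)}v$ on $\{\|v\|=t\}$.

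\paragraph{The gap in Step~2.} The compactness step fails as written. For $w\neq0$ put $\varphi_w(t)=I_{\lambda,q}(tw)$, so that $t\varphi_w'(t)=J_{\lambda,q}(tw)$. This quantity is negative on $(0,t^+(w))$, positive between $t^+(w)$ and $t^-(w)$, and negative beyond $t^-(w)$. Hence $J_{\lambda,q}(u)<0$ places $1$ \emph{outside} the roots, not between them.

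More seriously, the inequality $\liminf I_{\lambda,q}(su_n)\le\liminf I_{\lambda,q}(u_n)$ with $s<1$ goes the wrong way. Because $u_n\in\mathcal N^+_{\lambda,q}$, $t=1$ is the first critical point of $\varphi_{u_n}$. So $\varphi_{u_n}$ is strictly decreasing on $(0,1)$ and increasing on $(1,t^-(u_n))$, which gives $I_{\lambda,q}(su_n)\ge I_{\lambda,q}(u_n)$ for every $s\in(0,t^-(u_n)]$. No contradiction follows, so you have not shown that $m^+$ is attained. In fact the case $t^+(u)<1$ you arrive at never occurs here.

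\paragraph{How to repair it.} What is true, and what you need, is $t^+(u)>1$.
\begin{itemize}
\item Along a subsequence, $\|u_n\|\to l>\|u\|$ while the $L^q$ and $L^p$ norms converge. Hence $\lim_n\varphi'_{u_n}(t)>\varphi'_u(t)$ for every $t>0$, since both the $a$-term and the $b$-term strictly increase.
\item At $t=t^+(u)$ this gives $\varphi'_{u_n}(t^+(u))>0$ for large $n$. Since $\varphi'_{u_n}<0$ on $(0,1)$ and $\varphi'_{u_n}(1)=0$, it follows that $t^+(u)>1$.
\item Alternatively, use your Step~1. We have $\|u\|\le\liminf\|u_n\|\le t_0<\|t^-(u)u\|$ whenever $t^-(u)$ exists. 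So $1$ cannot lie beyond $t^-(u)$, and $J_{\lambda,q}(u)<0$ forces $1<t^+(u)$.
\item Now $\varphi_u$ is strictly decreasing on $(0,t^+(u))$, an interval containing $1$. Therefore $I_{\lambda,q}(t^+(u)u)<I_{\lambda,q}(u)<\lim_n I_{\lambda,q}(u_n)=m^+$, where the last strict inequality comes from $\|u\|<l$. This contradicts $t^+(u)u\in\mathcal N^+_{\lambda,q}$.
\end{itemize}
With this repair the rest of your argument goes through: strong convergence, $u\in\mathcal N^+_{\lambda,q}$ via $\mathcal N^0_{\lambda,q}=\emptyset$, $\mu=0$ in the multiplier rule, and the strong maximum principle.

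\paragraph{Comparison with the paper.} Once repaired, your proof is genuinely different from the paper's. You get boundedness of $\mathcal N^+_{\lambda,q}$ from the sphere separation and recover compactness through the fibering maps. The paper instead minimizes a sequentially weakly continuous functional on a weakly closed set where it is coercive, so no strong-convergence argument is needed. The price it pays is Lemma~\ref{KI} and the multiplier computation (forcing $p=2$) to show the minimizer lies on $\mathcal N_{\lambda,q}$.
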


\begin{theorem}\label{crit}
Assume $\lambda\in\left(0,\min\left\lbrace \Lambda_{p,4}, \Lambda_p\right\rbrace\right)$. Then, problem $(P_{\lambda,4})$ has a solution $u_{\lambda,4}\in \mathcal{N}_{\lambda,4}^+$ such that
    \begin{equation*}
        I_{\lambda,4}(u_{\lambda,4})=\inf_{\mathcal{N}_{\lambda,4}^+}I_{\lambda,4}<0.
    \end{equation*}
\end{theorem}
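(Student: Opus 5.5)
Theorem \ref{crit} is the critical case $q=4$, and the plan is to obtain $u_{\lambda,4}$ as a limit of the subcritical solutions given by Theorem \ref{subcrit}. Fix $\lambda<\min\{\Lambda_{p,4},\Lambda_p\}$ and take a sequence $q_n\uparrow 4$. The first task is to check that $\lambda<\Lambda_{p,q_n}$ for $n$ large, i.e. that $q\mapsto\Lambda_{p,q}$ is lower semicontinuous at $q=4$. For each fixed $t$, the quantity $\sup_{\|u\|=t}(\|u\|_p^p+\|u\|_q^q)$ converges to its value at $q=4$ as $q\to4$. This follows from $\|u\|_q^q\le |\Omega|^{1-q/4}\|u\|_4^q$ together with the reverse estimate via interpolation, uniformly on spheres. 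The supremum over $t$ of a family of continuous functions is then lower semicontinuous, which gives the claim.

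Theorem \ref{subcrit} then supplies $u_n:=u_{\lambda,q_n}\in\mathcal N^+_{\lambda,q_n}$ minimizing $I_{\lambda,q_n}$ on $\mathcal N^+_{\lambda,q_n}$. Next I would establish three estimates.
\begin{itemize}
\item[(i)] \emph{Uniform negative upper bound.} Fix a nonnegative $\varphi\neq0$. The fibering map $t\mapsto I_{\lambda,q_n}(t\varphi)$ has a local minimum at some $t_n\varphi\in\mathcal N^+_{\lambda,q_n}$, because the concave term dominates near $0$. This gives $I_{\lambda,q_n}(u_n)\le I_{\lambda,q_n}(t_n\varphi)\le -\delta<0$ with $\delta$ independent of $n$, by continuity in $q$.
\item[(ii)] \emph{Boundedness.} Using $J_{\lambda,q_n}(u_n)=0$ we can write
\[
I_{\lambda,q_n}(u_n)=a\Big(\tfrac12-\tfrac1{q_n}\Big)\|u_n\|^2-\lambda\Big(\tfrac1p-\tfrac1{q_n}\Big)\|u_n\|_p^p .
\]
Since $\|u_n\|_p^p\le c_p^p\|u_n\|^p$ and $p<2$, this shows $\|u_n\|$ is bounded.
\item[(iii)] \emph{Convergence of the energies.} Pass to $u_n\rightharpoonup u$ weakly in $\W$. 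Since $\|u_n\|_p^p\to\|u\|_p^p$ and the energy stays $\le-\delta$, we get $u\neq0$.
\end{itemize}

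The main obstacle is passing to the limit in the critical term. Set $v_n=u_n-u$ and use Brezis--Lieb: $\|u_n\|_{q_n}^{q_n}$ behaves like $\|u\|_4^4+\|v_n\|_4^4+o(1)$, which requires some care since the exponent is moving, and one can handle it by Vitali-type arguments. Then
\[
\|v_n\|_4^4\le S^{-2}\|v_n\|^4 .
\]
Combined with the Nehari identity, the condition $b<S^{-2}$ makes the quartic parts roughly balance: $b\|v_n\|^4-\|v_n\|_4^4\ge (bS^2-1)S^{-2}\|v_n\|^4$. So one must show that any nonzero bubble forces the energy above a level $c^*>0$. This level is explicitly computed from minimizing $\frac a4 s-\lambda\big(\frac1p-\frac14\big)c_p^p s^{p/2}$ plus the $(1-bS^2)$ factor. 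The condition $\lambda<\Lambda_p$ is exactly what makes the level $\inf I$ attainable only if $v_n\to0$. In practice I would compare $I_{\lambda,q_n}(u_n)<0$ with the lower bound $I(u)+\frac{a}{4}\|v_n\|^2+\frac{1}{4}(bS^2-1)S^{-2}\|v_n\|^4+o(1)$. The idea is to show that $I(u)$ is bounded below by $-\frac{2-p}{4p}\cdots$ in terms of $\Lambda_p$, so that a nontrivial $\lim\|v_n\|$ leads to a contradiction with the Nehari constraint of the limit. I expect the bookkeeping of this constant to be the delicate point.

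Once $u_n\to u$ strongly, the remaining steps are routine. The limit $u$ solves the $q=4$ equation and satisfies $J_{\lambda,4}(u)=0$ and $I_{\lambda,4}(u)\le-\delta$. Nonnegativity and positivity come from the $u_n$ being positive and from the strong maximum principle; positivity may be weakly degenerate, so I would use $|u|$ and Harnack. The condition $\lambda<\Lambda_{p,4}$ guarantees $\mathcal N^0_{\lambda,4}=\emptyset$, so the negative energy places $u$ in $\mathcal N^+_{\lambda,4}$. Minimality follows because for any $w\in\mathcal N^+_{\lambda,4}$ the fibering minimum point of $w$ for $q_n$ tends to $w$. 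This gives $I_{\lambda,q_n}(u_n)\le I_{\lambda,q_n}(s_nw)\to I_{\lambda,4}(w)$.
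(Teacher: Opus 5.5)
\textbf{Verdict: genuine gap.} Your overall architecture is the paper's. You approximate by $q_n\to4^-$ via \eqref{ineq3}, take the minimizers from Theorem~\ref{subcrit}, bound them and pass to a weak limit. You then exclude concentration using $\lambda<\Lambda_p$ and prove minimality via fibering points $t_nw$ with $t_n\to1$. The problem is that the step carrying the whole theorem, excluding loss of compactness, is not carried out, and the mechanism you sketch cannot do it.

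\textbf{Why your sketch fails.} Since $b<S^{-2}$, your inequality $b\|v_n\|^4-\|v_n\|_4^4\ge(bS^2-1)S^{-2}\|v_n\|^4$ has a \emph{negative} right-hand side. So your lower bound $I(u)+\frac a4\|v_n\|^2+\frac14(bS^2-1)S^{-2}\|v_n\|^4+o(1)$ tends to $-\infty$ as $\|v_n\|$ grows, and it cannot contradict $I_{\lambda,q_n}(u_n)<0$. The energy alone does not produce a positive ``bubble level'' here. Moreover, the weak limit $u$ is not on $\mathcal N_{\lambda,4}$ a priori. Passing to the limit in $I'_{\lambda,q_n}(u_n)=0$ gives the equation with the frozen Kirchhoff coefficient $a+bl^2$, where $l=\lim\|u_n\|$, not $a+b\|u\|^2$. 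So there is no ``Nehari constraint of the limit'' to contradict.

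\textbf{What is missing.} You need a quantized lower bound on the lost mass, and it comes from the equation, not from the energy. Put $\ell^2=\lim\|v_n\|^2=l^2-\|u\|^2$.
\begin{itemize}
\item Test the limit equation with $u$ to get $(a+bl^2)\|u\|^2=\|u\|_4^4+\lambda\|u\|_p^p$.
\item Subtract this from the limit of $J_{\lambda,q_n}(u_n)=0$, using your Brezis--Lieb upper bound $\limsup\|u_n\|_{q_n}^{q_n}\le\|u\|_4^4+\limsup\|v_n\|_4^4$. This gives
\[
(a+bl^2)\,\ell^2\le\limsup_{n}\|v_n\|_4^4\le S^{-2}\ell^4 .
\]
\item Hence $\ell>0$ forces $\ell^2\ge S^2(a+bl^2)$, i.e.\ $(1-bS^2)l^2\ge\|u\|^2+aS^2$. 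The paper gets the same inequality, $\mu_k\ge S^2(a+bl^2)$, from Lions' lemma by testing with $u_n\psi_\rho$.
\end{itemize}
Only now does the energy enter. Passing to the limit in $I_{\lambda,q_n}(u_n)=a(\frac12-\frac1{q_n})\|u_n\|^2-\lambda(\frac1p-\frac1{q_n})\|u_n\|_p^p<0$ gives the upper bound $l^2\le\lambda\frac{4-p}{ap}c_p^p\|u\|^p$. Combining the two bounds gives
\[
\frac{\|u\|^2+aS^2}{1-bS^2}\le\lambda\frac{4-p}{ap}c_p^p\|u\|^p .
\]
Minimizing $\tau^{2-p}+aS^2\tau^{-p}$ over $\tau=\|u\|$ then yields exactly $\lambda\ge\Lambda_p$, a contradiction; hence $\ell=0$ and $u_n\to u$ strongly.

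The bound must keep the coupling through $b\|u\|^2$. If you minimize $\frac a4 s-\lambda(\frac1p-\frac14)c_p^ps^{p/2}$ separately and add only the decoupled bubble level $\frac{a^2}{4(S^{-2}-b)}$, you get a threshold strictly below $\Lambda_p$.

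\textbf{A smaller point.} $\mathcal N^0_{\lambda,4}=\emptyset$ together with negative energy does not place $u$ in $\mathcal N^+_{\lambda,4}$. Positivity of $I_{\lambda,q}$ on $\mathcal N^-_{\lambda,q}$ is only established in Lemma~\ref{nehariinfmeno}, and there under the extra condition $\lambda<\widehat\Lambda_{p,q}$, which is not assumed here. Instead, after strong convergence, pass to the limit in $J'_{\lambda,q_n}(u_n)(u_n)>0$ to get $J'_{\lambda,4}(u)(u)\ge0$. Then use $\mathcal N^0_{\lambda,4}=\emptyset$ from Lemma~\ref{nehari} to make it strict, as the paper does.
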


\begin{theorem}\label{solutionN-subcrit}
    Assume $q\in(2,4)$ and $\lambda\in\left(0,\min\{\Lambda_{p,q},\widehat{\Lambda}_{p,q}\}\right)$. Then, there exists $q_\lambda\in(2,4)$ such that, for every $q\in [q_\lambda,4)$,
    %, let $q_\lambda$ be as in Lemma \ref{stimasuperioreinf}, and assume $q\in [q_\lambda,4)$.
    problem \eqref{problem} has a solution $u_{\lambda,q}\in\mathcal{N}_{\lambda,q}^-$ such that
    $$
    I_{\lambda,q}(u_{\lambda,q})=\inf_{\mathcal{N}_{\lambda,q}^-}I_{\lambda,q}>0.
    $$
\end{theorem}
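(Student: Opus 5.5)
Fix $\lambda\in(0,\min\{\Lambda_{p,q},\widehat\Lambda_{p,q}\})$ and work with the fibering maps $\varphi_u(t):=I_{\lambda,q}(tu)$, $t>0$. For $\|u\|=1$ we have
\[
\varphi_u(t)=\frac a2t^2-\frac{\|u\|_q^q-b}{q}t^q-\frac\lambda p\|u\|_p^pt^p .
\]
If $u\in\mathcal B_q$, then $\|u\|_q^q>b$. A standard analysis of $t\mapsto a t^{2-p}-(\|u\|_q^q-b)t^{q-p}$ shows the following: as soon as
\[
\lambda\|u\|_p^p<d_{p,q}(\|u\|_q^q-b)^{-\frac{2-p}{q-2}},
\]
the fiber has exactly two critical points $t^+(u)<t^-(u)$, with $t^+(u)u\in\mathcal N^+_{\lambda,q}$ and $t^-(u)u\in\mathcal N^-_{\lambda,q}$. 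The constant $d_{p,q}$ in \eqref{dpq} is exactly the maximum of that auxiliary function. Since $\|u\|_q^q\le c_q^q$ and $\|u\|_p\le\widetilde c_{p,q}$, the assumption $\lambda<\widehat\Lambda_{p,q}$ (which carries an extra factor $p/2$) gives this uniformly on $\mathcal B_q$. It gives more: at $t^-(u)$ the energy is bounded below by a positive quantity. To see this, evaluate $\varphi_u$ at the maximizer $t_0$ of $\frac a2t^2-\frac{c_q^q-b}{q}t^q$ type expressions. The factor $p/2$ is what makes $\varphi_u(t^-)\ge \varphi_u(t_{\max})>0$, with a bound $\delta_{p,q}>0$ independent of $u$. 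Conversely, if $\|u\|_q^q\le b$, then $\varphi_u$ has only a minimum and no $\mathcal N^-$ point. Hence $\mathcal N^-_{\lambda,q}=\{t^-(u)u:u\in\mathcal B_q\}$ and $\inf_{\mathcal N^-}I_{\lambda,q}\ge\delta>0$. I also expect $\mathcal N^0_{\lambda,q}=\emptyset$ for $\lambda<\Lambda_{p,q}$, as in Theorem \ref{subcrit}. Then $\mathcal N^-$ is a closed $C^1$ manifold and a natural constraint, so constrained minimizers (replaced by $|u|$, then positive by the strong maximum principle) are weak solutions.

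Next, take a minimizing sequence $(u_n)\subset\mathcal N^-_{\lambda,q}$. By Ekeland's principle on $\mathcal N^-$ we may assume it is a Palais--Smale sequence for $I_{\lambda,q}$ at level $m_q^-:=\inf_{\mathcal N^-}I_{\lambda,q}$. It is bounded, since $I=\frac{q-2}{2q}a\|u\|^2-\lambda(\frac1p-\frac1q)\|u\|_p^p$ on $\mathcal N$. For $q<4$ the embedding into $L^q$ and $L^p$ is compact, so $u_n\rightharpoonup u$ strongly in $L^q,L^p$. The only obstruction is the nonlocal Kirchhoff coefficient: we know $\|u_n\|\to\ell\ge\|u\|$, and $u$ solves the equation with coefficient $a+b\ell^{q-2}$.

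To get $\ell=\|u\|$, I would compare the limit equation tested with $u$ against $J(u_n)=0$. This gives
\[
(a+b\ell^{q-2})\ell^2=\|u\|_q^q+\lambda\|u\|_p^p=(a+b\ell^{q-2})\|u\|^2 ,
\]
hence $\ell=\|u\|$, provided $u\neq0$. So the real issue is nontriviality: $u=0$ would force $\|u_n\|_q\to0$, and then $J(u_n)=0$ gives $\|u_n\|\to0$. That contradicts the positive lower bound on $\|u\|$ for elements of $\mathcal N^-$, which comes from $t^-(u)\ge t_{\max}>0$. Then strong convergence holds and $u\in\mathcal N^-$ attains $m_q^-$.

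The main obstacle, and where $q_\lambda$ enters, is showing $\mathcal N^-_{\lambda,q}\neq\emptyset$ and, more to the point, that it is well behaved. The key fact is that $\mathcal B_q\neq\emptyset$ requires $c_q^q>b$, which holds only for $q$ near 4 by Proposition \ref{ineqq}. I would also use $q$ near 4 to control the constants $\widehat\Lambda_{p,q}$ and $\Lambda_{p,q}$ uniformly, and to keep $\mathcal N^0=\emptyset$. This means choosing $q_\lambda$ so that $\lambda<\min\{\Lambda_{p,q},\widehat\Lambda_{p,q}\}$ persists and $c_q^q>b$ for all $q\in[q_\lambda,4)$. For this I would use continuity in $q$ of $c_q$, $\widetilde c_{p,q}$ and $\Lambda_{p,q}$ as $q\to4^-$, presumably through an upper-semicontinuity estimate on $\sup_{\|u\|=t}\|u\|_q^q$.
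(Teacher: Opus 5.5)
Your argument is correct, but it takes a genuinely different route from the paper's.

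\textbf{How the paper argues.} The paper never builds a Palais--Smale sequence. It minimizes the auxiliary functional $\widetilde I_{\lambda,q}$ directly over $\widehat{\mathcal A}_{\lambda,q}\cap C_{q',R}$; this functional depends only on $\|u\|_q,\|u\|_p$ and is sequentially weakly continuous. Since $\widetilde I_{\lambda,q}$ is not coercive on $\widehat{\mathcal A}_{\lambda,q}$, the artificial $L^{q'}$-bound is what gives weak compactness. A Lagrange-multiplier argument then places the minimizer on $\mathcal N_{\lambda,q}$, hence on $\mathcal N^-_{\lambda,q}$. Lemma~\ref{R} shows the constraint does not change the infimum. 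It relies on the truncated-instanton estimate of Lemma~\ref{stimasuperioreinf}, which is the only place $q_\lambda$ comes from.

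\textbf{How you argue.} You apply Ekeland's principle on the closed $C^1$ manifold $\mathcal N^-_{\lambda,q}$ and use compactness of the subcritical embeddings. You handle the Kirchhoff coefficient with the identity $(a+b\ell^{q-2})\ell^2=\|u\|_q^q+\lambda\|u\|_p^p=(a+b\ell^{q-2})\|u\|^2$.

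\textbf{What each route buys.} Your route is more standard and gives slightly more. It works for every $q\in[q_0,4)$ with $\lambda<\min\{\Lambda_{p,q},\widehat\Lambda_{p,q}\}$, so $q_\lambda=q_0$ suffices and the instanton estimate is not needed for this statement. The paper's route avoids the Ekeland and natural-constraint machinery and stays within the direct-minimization scheme it uses throughout. The estimate behind $q_\lambda$ is needed anyway in Theorem~\ref{soluzcriticN-}, to exclude concentration as $q\to4$.

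\textbf{Unverified step.} Passing from the constrained Ekeland sequence to $I_{\lambda,q}'(u_n)\to0$ requires $J_{\lambda,q}'(u)(u)$ to be bounded away from $0$ on $\mathcal N^-_{\lambda,q}$, and you do not check this. It does hold. On $\mathcal N_{\lambda,q}$ one has $J_{\lambda,q}'(u)(u)=(2-q)a\|u\|^2+(q-p)\lambda\|u\|_p^p$. Using $\lambda<\widehat\Lambda_{p,q}$, $u/\|u\|\in\mathcal B_q$ and $\|u\|^{q-2}>\frac{a(2-p)}{(q-p)(c_q^q-b)}$, this quantity is $<-(1-\frac p2)(q-2)a\|u\|^2$.

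\textbf{Your last paragraph.} It misreads the role of $q_\lambda$. The hypothesis on $\lambda$ is imposed at each $q$ under consideration, so nothing needs to persist as $q$ varies. In any case, the continuity in $q$ of $\widetilde c_{p,q}$ and $\Lambda_{p,q}$ that you propose to use is not available. The paper only proves one-sided semicontinuity at $q=4$: in Lemma~\ref{Lambdabound}, and in Lemma~\ref{L*lim} under continuity of $\psi_p$ at $b$. That paragraph should simply be dropped.
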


\begin{theorem}\label{soluzcriticN-}
    There exists $\sigma>0$ such that, if $b\in \left(S^{-2}-\sigma,S^{-2}\right)$ and $\lambda\in\left(0,\min\{\Lambda_{p,4},\widehat{\Lambda}_{p,4} \}\right)$, problem $(P_{\lambda,4})$ has a
    solution $v_{\lambda,4}\in\mathcal{N}_{\lambda,4}^-$, such that
    $$
    I_{\lambda,4}(v_{\lambda,4})=\inf_{\mathcal{N}_{\lambda,4}^-}I_{\lambda,4}>0.
    $$
\end{theorem}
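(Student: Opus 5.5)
We obtain $v_{\lambda,4}$ as a limit of the solutions $u_{\lambda,q}\in\mathcal{N}_{\lambda,q}^-$ given by Theorem \ref{solutionN-subcrit}, letting $q\to4^-$. This is the approximation scheme of \cite{av}. Fix $\lambda\in(0,\min\{\Lambda_{p,4},\widehat\Lambda_{p,4}\})$. First we check that the thresholds are continuous, i.e. $\Lambda_{p,q}\to\Lambda_{p,4}$ and $\widehat\Lambda_{p,q}\to\widehat\Lambda_{p,4}$ as $q\to4^-$. For this we use that $c_q\to c_4$, that $\widetilde c_{p,q}\to\widetilde c_{p,4}$, and that $d_{p,q}$ is continuous in $q$. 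Hence $\lambda<\min\{\Lambda_{p,q},\widehat\Lambda_{p,q}\}$ for all $q$ close to $4$. Taking $q$ also $\ge q_\lambda$, Theorem \ref{solutionN-subcrit} gives solutions $u_q:=u_{\lambda,q}\in\mathcal N^-_{\lambda,q}$ with $m_q:=I_{\lambda,q}(u_q)=\inf_{\mathcal N^-_{\lambda,q}}I_{\lambda,q}>0$.

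\textbf{Uniform estimates.} From $J_{\lambda,q}(u_q)=0$ we get
$$I_{\lambda,q}(u_q)=a\left(\tfrac12-\tfrac1q\right)\|u_q\|^2-\lambda\left(\tfrac1p-\tfrac1q\right)\|u_q\|_p^p\ge \alpha\|u_q\|^2-C\lambda\|u_q\|^p .$$
Together with an upper bound $\limsup_{q\to4}m_q<\infty$, this shows that $(u_q)$ is bounded in $\W$. For that upper bound we fix $\varphi\in\mathcal B_4$ and take its projection $t_q\varphi$ on $\mathcal N^-_{\lambda,q}$. The scalings $t_q$ stay bounded because $\|\varphi\|_q^q-b\to\|\varphi\|_4^4-b>0$, and this gives $m_q\le I_{\lambda,q}(t_q\varphi)\le C$. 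For a lower bound away from $0$: on $\mathcal N^-$ the condition $J'(u)(u)<0$ gives $(2-p)a\|u\|^2<(q-p)\|u\|_q^q-b(q-p)\|u\|^q$. Hence $\|u_q\|\ge\rho>0$ uniformly, and also $m_q\ge\delta>0$ uniformly; the latter is exactly where the constant $\widehat\Lambda_{p,q}$ and its limit enter. Up to a subsequence, $u_q\rightharpoonup v$ in $\W$, $\|u_q\|\to \ell$, $u_q\to v$ in $L^m$ for $m<4$, and $u_q\to v$ a.e.

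\textbf{Passing to the limit and nontriviality.} Set $A:=a+b\ell^2$. Each $u_q$ is a weak solution, and $u_q^{q-1}\rightharpoonup v^3$ weakly in $L^{4/3}$. Therefore $v\ge0$ solves $-A\Delta v=v^3+\lambda v^{p-1}$. Next we apply Brezis--Lieb to $w_q:=u_q-v$. Here $\|u_q\|_q^q$ is compared with $\|u_q\|_4^4$ via Hölder, with errors $|\Omega|^{(4-q)/4}\to1$. This yields $A\lim\|w_q\|^2=\lim\|w_q\|_4^4\le S^{-2}\lim\|w_q\|^4$. So either $w_q\to0$, or $\lim\|w_q\|^2\ge AS^2\ge(a+b\lim\|w_q\|^2)S^2$, i.e. $\lim\|w_q\|^2\ge aS^2/(1-bS^2)$. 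The main obstacle is to rule out the second alternative, and this is where $b$ close to $S^{-2}$ is used. The idea is that $m_q$ stays bounded by a constant $C$ independent of $b$ near $S^{-2}$: choose the test function $\varphi$ uniformly, using $\sup_{\mathcal B_4}\|u\|_4^4$ and $c_4^4=S^{-2}>b$. By contrast, a nonvanishing bubble forces an energy at least $a\left(\frac12-\frac14\right)\frac{aS^2}{1-bS^2}-o(1)$, which tends to $+\infty$ as $b\to S^{-2}$. More precisely, we split the energy with Brezis--Lieb into the energy of $v$ (nonnegative up to the concave term, which is controlled) plus the energy of the bubble. So for $b\in(S^{-2}-\sigma,S^{-2})$ we get $w_q\to0$, i.e. $u_q\to v$ strongly with $\|v\|\ge\rho$. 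Then $v\neq0$, and $v>0$ by the strong maximum principle.

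\textbf{Identifying the minimum.} Strong convergence gives $v\in\mathcal N_{\lambda,4}$ and $I_{\lambda,4}(v)=\lim m_q$. Passing to the limit in the $\mathcal N^-$ inequality gives $J'_{\lambda,4}(v)(v)\le0$. Equality is excluded because $\lambda<\Lambda_{p,4}$ implies $\mathcal N^0_{\lambda,4}=\emptyset$, the same fact used for Theorem \ref{crit}. Hence $v\in\mathcal N^-_{\lambda,4}$. For minimality, take any $u\in\mathcal N^-_{\lambda,4}$ and let $t_qu$ be its projection on $\mathcal N^-_{\lambda,q}$. The fiber maps converge in $C^1$ locally, and the maximum is nondegenerate, so $t_q\to1$. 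Then $m_q\le I_{\lambda,q}(t_qu)\to I_{\lambda,4}(u)$. Thus $I_{\lambda,4}(v)=\lim m_q\le\inf_{\mathcal N^-_{\lambda,4}}I_{\lambda,4}$, which gives equality, and the value is positive since $m_q\ge\delta$.
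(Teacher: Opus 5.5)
Your overall scheme matches the paper's: you take limits of the subcritical $\mathcal{N}^-$-minimizers of Theorem \ref{solutionN-subcrit}, split into cases for the weak limit, and identify the minimum through projections $t_qu$ with $t_q\to1$. However, the step that excludes concentration, which is the heart of the theorem, does not work. You claim $m_q$ stays bounded by a constant independent of $b$ near $S^{-2}$, while a bubble costs $\frac{a}{4}\cdot\frac{aS^2}{1-bS^2}=\frac{a^2}{4(S^{-2}-b)}\to+\infty$. This claim is false. By Lemma \ref{nehariinfmeno},
$m_q\ge a\left(\frac12-\frac1q\right)\left(1-\lambda\widehat{\Lambda}_{p,q}^{-1}\right)\left[\frac{a(2-p)}{(q-p)(c_q^q-b)}\right]^{\frac{2}{q-2}}$.
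Since $c_q^q\to S^{-2}$, and $\widetilde c_{p,q}\le c_p$ keeps $\lambda\widehat\Lambda_{p,q}^{-1}$ small for fixed $\lambda$ and $b$ near $S^{-2}$, the right-hand side is comparable to $\frac{a^2(2-p)}{4(4-p)(S^{-2}-b)}$ for $q$ near $4$. So the $\mathcal{N}^-$ level blows up at the same rate as the bubble energy. The $b$-independent test function you invoke does not exist. $S^{-2}=\sup_{\|u\|=1}\|u\|_4^4$ is not attained on a bounded domain, so any fixed $\varphi$ with $\|\varphi\|=1$ leaves $\mathcal{B}_4$ once $b\ge\|\varphi\|_4^4$. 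Hence the level and the compactness threshold agree to leading order, and the comparison has to be won at the next order.

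What is missing is the Brezis--Nirenberg-type strict estimate of Lemma \ref{stimasuperioreinf}. Testing with the truncated instantons $u_{\varepsilon_\lambda}$, $\varepsilon_\lambda=\varepsilon\lambda^{2/(2-p)}$, one gets, uniformly for $q\in[q_\lambda,4)$,
\[
\inf_{\mathcal{N}^-_{\lambda,q}}I_{\lambda,q}<\frac{a^2}{4(S^{-2}-b)}-\gamma_\lambda,\qquad \gamma_\lambda>0 .
\]
This holds because the concave gain, of order $\lambda\|u_{\varepsilon_\lambda}\|_p^p\ge C_1\lambda\varepsilon_\lambda^{p/2}$, beats the loss $\frac{a^2}{4}\bigl(\frac{1}{\|u_{\varepsilon_\lambda}\|_4^4-b}-\frac{1}{S^{-2}-b}\bigr)=O(\varepsilon_\lambda)$; this is where $p<2$ matters. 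If a bubble formed, then $\ell^2\ge\frac{\|v\|^2+aS^2}{1-bS^2}$, and comparing with this upper bound leaves
\[
\frac{a\|v\|^2}{4(1-bS^2)}-\lambda\frac{4-p}{4p}\|v\|_p^p\le-\gamma_\lambda .
\]
The left-hand side is your ``energy of $v$'', and it can be negative. It is bounded below by $-K\lambda^{2/(2-p)}(1-bS^2)^{p/(2-p)}$: use $\|v\|_p^p\le c_p^p\|v\|^p$ and minimize in $\|v\|$. By contrast, $\gamma_\lambda$ is of order $\lambda^{2/(2-p)}\varepsilon^{p/2}(S^{-2}-b)^{-p/2}$. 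The restriction $b>S^{-2}-\sigma$ is exactly what makes $\gamma_\lambda$ exceed this bound and yields the contradiction. So closeness of $b$ to $S^{-2}$ is needed not to make the level small relative to the bubble, but to make the gap $\gamma_\lambda$ dominate the possibly negative contribution of the weak limit. Without a strict sub-threshold estimate there is nothing to beat that contribution with.

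Separately, you assert $\widetilde c_{p,q}\to\widetilde c_{p,4}$ without proof. Only $\liminf_{q\to4^-}\widetilde c_{p,q}\ge\widetilde c_{p,4}$ is automatic, whereas getting $\lambda<\widehat\Lambda_{p,q}$ for $q$ near $4$ needs the reverse inequality. The paper proves that only when $\psi_p$ is continuous at $b$ (Lemma \ref{L*lim}). It reaches a general $b$ by approximating with continuity points $b_n\downarrow b$, using that $\Lambda_{p,4}$ and $\widehat\Lambda_{p,4}$ are non-decreasing in $b$.
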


We point out that the number $\sigma$ in Theorem \ref{soluzcriticN-}, which determines the size of the range of $b$, is not only deduced, but can be computed as a function of other quantities appearing in auxiliary lemmas (see \eqref{sigma} of Section \ref{minNmeno}).

As anticipated before, the proof of Theorems \ref{subcrit} and \ref{solutionN-subcrit}, dealing with the subcritical regime,  relies upon the approximation procedure employed in \cite{av}. More specifically, the underlying idea is to introduce suitable functionals $\widetilde{I}_{\lambda,q}$, $K_{\lambda,q}$, approximating $I_{\lambda,q}$ and its second derivative and depending only on $L^p$ and $L^q$-norms, as well as related subsets of the Nehari manifold. Variational arguments based upon direct minimization allow us to find critical points of $\widetilde{I}_{\lambda,q}$, and hence of $I_{\lambda,q}$, since the former agrees with the latter on $\mathcal{N}_{\lambda,q}$.
We emphasize that, while working on $\mathcal{N}^+_{\lambda,q}$ we benefit from the coercivity of $\widetilde{I}_{\lambda,q}$ %on a set containing $\mathcal{N}^+_{\lambda,q}$
(see the proof of Theorem \ref{subcrit}), this fails for the minimization on $\mathcal{N}^-_{\lambda,q}$, making this case more delicate. We circumvent the obstacle by introducing an $L^q$-constraint, which through careful estimates of the parameters involved is finally "reabsorbed", allowing us to conclude as in the previous case. A main role in this case is played by the fibering map associated with the truncated instanton \eqref{truncinstanton}.

The solutions which stem from Theorems \ref{crit} and \ref{soluzcriticN-}  are obtained instead as a limit of a sequence of solutions of the corresponding subcritical problems, making also use of Lions' concentration-compactness principle. We stress out that, while the solution on $\mathcal{N}_{\lambda,4}^+$ is obtained for $b$ ranging all over $(0,S^{-2})$, the minimization on $\mathcal{N}_{\lambda,4}^-$ requires $b$ close enough to $S^{-2}$. In this sense, our result complements Theorem 1.4 of \cite{naishi2020existence}, in which the existence of a solution for $(P_{\lambda,4})$ was established for $b$ tending to 0 and for every $\lambda>0$, and for any $b\in(0,S^{-2})$ and sufficiently small $\lambda=\lambda(b)$ (but with no information on the parameter bounds). %Compared to this work, based on the Nehari manifold method and the concentration-compactness analysis for Palais–Smale sequences, our results, as already said we make explicit the interval of the parameters leading to the existence results.
Similarly to \cite{naishi2020existence}, instead, we are not able to say if the range of $b$ in Theorem \ref{soluzcriticN-}  can be extended to the whole $(0,S^{-2})$, it remains an open problem.

The following sections are structured as follows. Section \ref{preliminary} contains the preparatory results and the analysis of the Nehari manifold useful for our purposes. In Section \ref{minNpiu} we address the minimization on $\mathcal{N}^+_{\lambda,q}$, both in the subcritical and in the critical case; in Section \ref{minNmeno} we do the same thing on
$\mathcal{N}^-_{\lambda,q}$.

%%%%%%%%%%%%%%%%%%%%%%%%%%%%%%%%%%%%%
\section{Preliminary results and Nehari manifold analysis}\label{preliminary}

\begin{proposition}\label{ineqq}
There exist $q_0\in (2,4)$ such that $c_q^q>b$ for each $q\in [q_0,4]$.
\end{proposition}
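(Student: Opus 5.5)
The plan is to compare $c_q^q$ with $c_4^4=S^{-2}$, using the standing assumption $b<S^{-2}$, and then pass to the limit $q\to4^-$. The natural route is lower semicontinuity of $q\mapsto c_q^q$ at $q=4$, obtained by testing the supremum in \eqref{costanteimm} at a fixed function. Since $c_4^4=S^{-2}>b$, we may fix $\varepsilon>0$ with $S^{-2}-\varepsilon>b$. By the definition of $c_4$ there is then some $u\in\W$ with $\|u\|=1$ and $\|u\|_4^4>S^{-2}-\varepsilon>b$. By density we may take $u\in C_c^\infty(\Omega)$; normalizing $\|u\|=1$ keeps $\|u\|_4^4$ close to its value, so the strict inequality survives.

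The next step is to show that $\|u\|_q^q\to\|u\|_4^4$ as $q\to4^-$. The function $u$ is bounded and $\Omega$ is bounded, so $|u|^q\le \max\{1,\|u\|_\infty^4\}$ for all $q\in(2,4]$. Moreover $|u(x)|^q\to|u(x)|^4$ pointwise. Dominated convergence therefore gives the convergence. Consequently there is $q_0\in(2,4)$ with $\|u\|_q^q>b$ for all $q\in[q_0,4]$. This also holds at $q=4$, where the claim is just $S^{-2}>b$.

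Finally, $c_q^q\ge\|u\|_q^q$ because $\|u\|=1$, so $c_q^q>b$ on $[q_0,4]$.

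The only point needing care is that the supremum $c_4$ is not attained. This is why we work with a near-maximizer rather than an extremal. Choosing that near-maximizer smooth and bounded makes the dominated convergence step trivial. Alternatively, one could use $u\in\W\subset L^4$ directly, with the domination $|u|^q\le 1+|u|^4$.
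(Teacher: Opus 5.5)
Your proof is correct, but it takes a different route from the paper's. The paper invokes Theorem 9 of \cite{afi}, which gives continuity of $q\mapsto \sup_{\|u\|=1}\|u\|_q^q=c_q^q$ on $(2,4]$, and combines it with $c_4^4=S^{-2}>b$.

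You prove only the half that is actually needed, namely lower semicontinuity at $q=4$. You fix a single $u\in\W$ with $\|u\|=1$ and $\|u\|_4^4>b$, which exists because the supremum exceeds $b$. Dominated convergence with the bound $|u|^q\le 1+|u|^4$ (integrable since $|\Omega|<\infty$) gives $\|u\|_q^q\to\|u\|_4^4$, and then $c_q^q\ge\|u\|_q^q>b$ for $q$ near $4$. This works because $c_q^q$ is a supremum of functions that are each continuous in $q$, hence automatically lower semicontinuous, and that is all the statement requires. Your argument is self-contained and avoids the external theorem. The paper's citation, by contrast, costs nothing extra there: full continuity of $m\mapsto c_m^m$ is used again in the proofs of Lemma \ref{Lambdabound} and Lemma \ref{L*lim}.

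Two minor remarks. First, the passage to a smooth near-maximizer is superfluous, as you note yourself. Second, the non-attainment of $c_4$ plays no role; the argument would be identical if an extremal existed.
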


\begin{proof}
By Theorem 9 of \cite{afi}, we know that  the function $q\in (2,4]\mapsto \sup_{\|u\|=1}\|u\|^q_q$ is continuous.  Since
$S^{-2}=\sup_{\|u\|=1}\|u\|_4^4>b$, the conclusion easily follows.
\end{proof}

From now on, even if not explicitly stated, we always assume $q\in [q_0,4]$.
\begin{proposition}\label{ineq}
For each $t>0$, one has
$$
\sup_{\|u\|=t}\left(\|u\|_p^p+\|u\|_q^q\right)-b t^q\geq t^q(c_q^q-b)>0.
$$
\end{proposition}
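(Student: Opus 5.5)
The plan is to drop the $L^p$ term and use homogeneity to compute the supremum of the $L^q$ term exactly. Both quantities are scale-invariant in the right way, so the estimate reduces to the definition of $c_q$ and to Proposition \ref{ineqq}.

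Fix $t>0$. Since $\|u\|_p^p\geq 0$ for every $u$, we have
$$
\sup_{\|u\|=t}\left(\|u\|_p^p+\|u\|_q^q\right)\geq \sup_{\|u\|=t}\|u\|_q^q .
$$
Next use homogeneity. The map $v\mapsto tv$ is a bijection from the unit sphere $\{\|v\|=1\}$ of $\W$ onto the sphere $\{\|u\|=t\}$, and $\|tv\|_q^q=t^q\|v\|_q^q$. Therefore
$$
\sup_{\|u\|=t}\|u\|_q^q=t^q\sup_{\|v\|=1}\|v\|_q^q=t^q c_q^q,
$$
by the definition \eqref{costanteimm} of $c_q$. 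The value $c_q$ is finite because $\W\hookrightarrow L^q(\Omega)$ continuously for $q\le 4$.

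Subtracting $bt^q$ from both sides gives
$$
\sup_{\|u\|=t}\left(\|u\|_p^p+\|u\|_q^q\right)-bt^q\geq t^q(c_q^q-b).
$$
This quantity is strictly positive because $t>0$ and, by the standing assumption $q\in[q_0,4]$, Proposition \ref{ineqq} gives $c_q^q>b$.

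There is no real obstacle here. The only point needing care is that the supremum on the left is finite, so that the subtraction is meaningful; this follows from the Sobolev embeddings into $L^p$ and $L^q$. In particular the denominators in \eqref{lambdapq} are positive, so $\Lambda_{p,q}$ is well defined.
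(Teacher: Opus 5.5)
Your proof is correct and follows the paper's approach. The paper just says the inequality is a direct consequence of Proposition \ref{ineqq}; you have written out the implicit steps, namely dropping the nonnegative $L^p$ term, using homogeneity to get $\sup_{\|u\|=t}\|u\|_q^q=t^q c_q^q$, and invoking $c_q^q>b$ for $q\in[q_0,4]$.
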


\begin{proof}
It is a direct consequence of Proposition \ref{ineqq}.
\end{proof}

%Let us introduce the non-empty set
%\begin{equation}\label{bq}
%\mathcal{B}_q:=\{u\in \W: \|u\|=1 \ \text{ and }\ \|u\|_q^q>b\}.
%\end{equation}

%By Proposition \ref{ineq}, it is well-defined the following number:
%\begin{equation}\label{lambdapq}
%\Lambda_{p,q}:=\sup_{t>0}\left[\frac{at^2}{\sup_{\|u\|=t}\left(\|u\|_p^p+\|u\|_q^q\right)-bt^q}\right]^{\frac{q-p}{q-2}}.
%\end{equation}

\begin{lemma}\label{Lambdabound}
If $p\in(1,2]$ one has
\begin{equation}\label{ineq2}
\frac{d_{p,q}}{ c_p^p \left( c_q^q -b \right)^\frac{2-p}{q-2}} < \Lambda_{p,q} \leq \frac{d_{p,q}}{\sup_{u\in\mathcal{B}_q} \left\| u\right\|_p^p
\left( \left\|u\right\|_q^q -b \right)^\frac{2-p}{q-2}}\leq \Lambda_*,
\end{equation}
where $c_m$, $\Lambda_{p,q}$, $d_{p,q}$, $\mathcal{B}_q$ are as in  \eqref{costanteimm}, \eqref{lambdapq}, \eqref{dpq} and \eqref{bq}, respectively,
%\begin{equation}\label{dpq}
%d_{p,q}:=
%\left\{
%\begin{array}{ll}
%\displaystyle\left(\frac{a}{q-p}\right)^\frac{q-p}{q-2}(q-2)\left(2-p\right)^\frac{2-p}{q-2}, & \text{ if }\ 1<p<2,\smallskip\\
%a & \text{ if } \ p=2,
%\end{array}\right.
%\end{equation}
and
\begin{equation}\label{L*}
\Lambda_*:=\sup_{(m,r)\in [1,2]\times [q_0,4])}\frac{d_{m,r}}{\sup_{u\in\mathcal{B}_r} \left\| u\right\|_m^m \left( \left\|u\right\|_r^r -b
\right)^\frac{2-m}{r-2}} < +\infty.
\end{equation}
Moreover, one has
\begin{eqnarray}\label{ineq3}
\Lambda_{p,4}\leq \liminf_{q\rightarrow 4^-} \Lambda_{p,q}.
\end{eqnarray}
\end{lemma}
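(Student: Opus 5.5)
The whole lemma rests on one rescaling. For $t>0$ write $u=tv$ with $\|v\|=1$, and set $A_v:=\|v\|_p^p$, $B_v^{(q)}:=\|v\|_q^q-b$. Then
$$D_q(t):=\sup_{\|u\|=t}\left(\|u\|_p^p+\|u\|_q^q\right)-bt^q=\sup_{\|v\|=1}\left(t^pA_v+t^qB_v^{(q)}\right).$$
Each bound in \eqref{ineq2} then reduces to an elementary one-variable computation for functions of the form $\phi_{A,B}(t):=at^2/(At^p+Bt^q)$ with $A,B>0$. Minimizing $At^{p-2}+Bt^{q-2}$ gives the critical point $t_*^{q-p}=A(2-p)/(B(q-2))$. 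Substituting and raising to the power $\frac{q-p}{q-2}$ should give exactly
$$\sup_{t>0}\phi_{A,B}(t)^{\frac{q-p}{q-2}}=\frac{d_{p,q}}{A\,B^{\frac{2-p}{q-2}}},$$
with $d_{p,q}$ as in \eqref{dpq}. I will check this formula first, since every estimate uses it.

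\emph{Lower bound.} Since $A_v\le c_p^p$ and $B_v^{(q)}\le c_q^q-b$, we have $D_q(t)\le t^pc_p^p+t^q(c_q^q-b)$. Taking the supremum over $t$ with $A=c_p^p$, $B=c_q^q-b>0$ (Proposition \ref{ineqq}) gives the non-strict lower bound. For strictness it suffices to show $D_q(t_*)<t_*^pc_p^p+t_*^q(c_q^q-b)$ at the single optimal $t_*$.

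For $q<4$, the map $v\mapsto t_*^pA_v+t_*^qB_v^{(q)}$ is weakly continuous. By compactness its supremum over the sphere is attained, after normalizing the norm of a weak limit upward, which only increases the positive quantity. Equality would then force one $v$ to be simultaneously extremal for $c_p$ and $c_q$. That is impossible: the Euler--Lagrange equations $-\Delta v=\mu v^{p-1}$ and $-\Delta v=\nu v^{q-1}$ with $p\neq q$ cannot share a nontrivial solution.

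For $q=4$, I would take a maximizing sequence $(v_n)$ and argue by contradiction. If $t_*^pA_{v_n}+t_*^4B_{v_n}^{(4)}$ tended to the upper bound, then both $\|v_n\|_p^p\to c_p^p$ and $\|v_n\|_4^4\to c_4^4$. By compactness the first forces $v_n\to v$ strongly in $\W$, with $v$ a $c_p$-extremal. But then $v$ would attain $c_4$, which is impossible on a bounded domain. \textbf{I expect this strictness step, especially at $q=4$, to be the main obstacle.}

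\emph{Upper bounds.} For each fixed $v\in\mathcal{B}_q$ we have $D_q(t)\ge t^pA_v+t^qB_v^{(q)}$, so $\Lambda_{p,q}\le d_{p,q}/\bigl(A_v(B_v^{(q)})^{\frac{2-p}{q-2}}\bigr)$. Taking the infimum over $v\in\mathcal{B}_q$ gives the middle inequality; $\mathcal{B}_q\neq\emptyset$ by Proposition \ref{ineqq}. The inequality $\le\Lambda_*$ is then trivial with $m=p$, $r=q$.

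\emph{Finiteness of $\Lambda_*$.} I will use a finite cover. For each $r_0\in[q_0,4]$, pick $v$ with $\|v\|=1$ and $\|v\|_{r_0}^{r_0}>b$. By continuity in $r$, this $v$ stays in $\mathcal{B}_r$ with $\|v\|_r^r-b\ge\delta>0$ on a neighborhood of $r_0$. Compactness of $[q_0,4]$ then yields finitely many such $v$'s. On each piece the denominator is bounded below, because $\|v\|_m^m$ is continuous and positive in $m\in[1,2]$ and $(2-m)/(r-2)\le 1/(q_0-2)$. Moreover $d_{m,r}$ is bounded on the compact parameter set.

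\emph{Proof of \eqref{ineq3}.} Fix $t$. By Hölder, $\|v\|_q^q\le|\Omega|^{\frac{4-q}{4}}\|v\|_4^q$, and $\|v\|_4\le c_4$ on the unit sphere. Hence $\|v\|_q^q\le\|v\|_4^4+o(1)$ uniformly in $v$ as $q\to4^-$, and also $t^q\to t^4$. This gives $\limsup_{q\to4^-}D_q(t)\le D_4(t)$. Therefore $\liminf_{q\to4^-}\Lambda_{p,q}\ge[at^2/D_4(t)]^{\frac{4-p}{2}}$ for every $t$, and taking the supremum in $t$ gives \eqref{ineq3}.
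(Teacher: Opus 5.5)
Your proposal is correct for $p\in(1,2)$ and follows the paper's skeleton. It uses the same one-variable optimization giving $d_{p,q}/\bigl(AB^{\frac{2-p}{q-2}}\bigr)$. It gets the upper bound by freezing one $v\in\mathcal{B}_q$, which is exactly the paper's $\sup_t\inf_u\le\inf_u\sup_t$ chain. It gets strictness from the fact that no function is simultaneously extremal for $c_p$ and $c_q$.

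The sub-arguments differ in four places.

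(i) You reduce strictness to the optimal $t_*$ of the comparison function. The paper passes from the pointwise strict inequality \eqref{sineq} to a strict inequality between suprema without comment. Your observation that the comparison supremum is attained is exactly what justifies that step.

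(ii) The paper treats all $q\in[q_0,4]$ uniformly. Along a maximizing sequence, $\|u_n\|_p^p\to c_p^p$ forces strong convergence in $\W$, so the limit is a joint extremal, and the two Euler--Lagrange equations force it to be constant. Your split into $q<4$ (attainment by weak continuity and rescaling) and $q=4$ (non-attainment of $S$ on bounded domains) is valid. However, $q=4$ is not really an obstacle: once strong convergence is known, the Euler--Lagrange contradiction works verbatim there too.

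(iii) The paper calls $\Lambda_*<+\infty$ trivial. Your finite-cover argument actually supplies the uniform lower bound on the denominator.

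(iv) For \eqref{ineq3} you use the uniform estimate $\sup_{\|v\|=1}\bigl(|\Omega|^{\frac{4-q}{4}}\|v\|_4^q-\|v\|_4^4\bigr)\to0$. The paper instead constructs maximizers $u_q$ of $\|u\|_p^p+\|u\|_q^q$ on $\{\|u\|=t\}$, proves $K_1\le\|u_q\|_q\le K_2$, deduces $\|u_q\|_q^{4-q}\to1$, and then estimates. Your route is shorter and needs neither maximizers nor the two-sided bounds. You should just state explicitly that $s^q-s^4\to0$ uniformly on $[0,c_4]$.

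One caveat, shared with the paper: the reduction to $t_*$ needs $p<2$, and at $p=2$ the strict inequality in \eqref{ineq2} can genuinely fail. There the comparison function $a/\bigl(c_2^2+(c_q^q-b)t^{q-2}\bigr)$ only approaches $a/c_2^2=d_{2,q}/c_2^2$ as $t\to0^+$. Let $e_1$ be the first Dirichlet eigenfunction with $\|e_1\|=1$ and suppose $b\le\|e_1\|_q^q$. Then $D_q(t)\ge c_2^2t^2$ for every $t$, so $\Lambda_{2,q}=a/c_2^2$, which is equality rather than strict inequality. The non-strict lower bound, the upper bounds and \eqref{ineq3} in your argument are unaffected. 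Only $p\in(1,2)$ is used later in the paper.
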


\begin{proof}
If $p\in(1,2)$ and $u\in \mathcal{B}_q$, the function
$$
\eta(t):= t^{p-2}\|u\|_p^p+t^{q-2}(\|u\|_q^q-b), \quad t\in(0,+\infty),
$$
is decreasing in $(0,t_*)$ and increasing in $(t_*,+\infty)$, where
$$
t_*:=\left( \frac{(2-p)\left\| u\right\|_p^p}{(q-2)\left( \left\|u\right\|_q^q -b \right)}\right)^\frac{1}{q-p},
$$
and
$$
\eta(t_*)=\left( \frac{q-p}{q-2}\right)\left(\frac{2-p}{q-2}\right)^\frac{p-2}{q-p}\left\| u\right\|_p^\frac{p(q-2)}{q-p} \left( \left\|u\right\|_q^q
-b \right)^\frac{2-p}{q-p},
$$
while, for $p=2$, $\eta$ is increasing in $(0,+\infty)$. Therefore, one has
\begin{align*}
\Lambda_{p,q} &=\sup_{t>0}\left[\frac{a}{\sup_{\|u\|=1}\left(t^{p-2}\|u\|_p^p+t^{q-2}(\|u\|_q^q-b)\right)}\right]^{\frac{q-p}{q-2}}\\
& \leq \sup_{t>0}\inf_{u\in\mathcal{B}_q}\left[\frac{a}{\left(t^{p-2}\|u\|_p^p+t^{q-2}(\|u\|_q^q-b)\right)}\right]^{\frac{q-p}{q-2}}\\
& \leq \inf_{u\in\mathcal{B}_q}\sup_{t>0}\left[\frac{a}{\left(t^{p-2}\|u\|_p^p+t^{q-2}(\|u\|_q^q-b)\right)}\right]^{\frac{q-p}{q-2}}\\
& = \frac{d_{p,q}}{\displaystyle{\sup_{u\in\mathcal{B}_q}}\left( \left\| u\right\|_p^p \left( \left\|u\right\|_q^q -b
\right)^\frac{2-p}{q-2}\right)},
\end{align*}
and thus, the second inequality in \eqref{ineq2}. The third inequality and the fact that $\Lambda_*<+\infty$ are trivial.  Now, let us prove the first inequality in \eqref{ineq2}. To this end, we first claim that
\begin{equation}\label{sineq}
\sup_{\|u\|=t}(\|u\|_p^p+\|u\|_q^q)<\sup_{\|u\|=t}\|u\|_p^p+\sup_{\|u\|=t}\|u\|_q^q.
\end{equation}
Indeed, assuming
$$
\sup_{\|u\|=t}(\|u\|_p^p+\|u\|_q^q)=\sup_{\|u\|=t}\|u\|_p^p+\sup_{\|u\|=t}\|u\|_q^q,
$$
one would have
$$
t^pc_p^p+t^qc_q^q=\sup_{\|u\|=t}\|u\|_p^p+\sup_{\|u\|=t}\|u\|_q^q=\sup_{\|u\|=t}(\|u\|_p^p+\|u\|_q^q).
$$
Hence, we can fix a sequence $\{u_n\}$ in $\W$ such that $\|u_n\|=1$ and
\begin{equation*}
t^p\|u_n\|_p^p+t^q\|u_n\|_q^q\geq t^pc_p^p+t^qc_q^q-\frac{1}{n},
\end{equation*}
which implies
\begin{equation}\label{1}
\|u_n\|_p^p\geq c_p^p -\frac{1}{nt^p} \quad \text{and} \quad \|u_n\|_q^q\geq c_q^q -\frac{1}{nt^q},
\end{equation}
for each $n\in \N$. Without loss of generality, we may assume $u_n$ non-negative. Since $\|u_n\|=1$ for each $n\in \N$, there exists some non-negative function $u_*\in \W$ such that $u_n\rightarrow u_*$ weakly in $\W$, and strongly in $L^m(\Omega)$, for each $m\in[1,4)$. In particular, from
\eqref{1} one has
$$
\|u_*\|_p^p =\lim_{n\rightarrow +\infty}\|u_n\|_p^p=c_p^p,
$$
and thus (it is well known that) it must be $\|u_*\|=1$. This implies that $\{u_n\}$ strongly converges to $u_*$  in $\W$.  Hence, again from \eqref{1}, it follows (also for $q=4$) that
$$
\|u_*\|_q^q =\lim_{n\rightarrow +\infty}\|u_n\|_q^q=c_q^q.
$$
Consequently, by applying the Lagrange Multipliers Theorem, we infer that $u_*$ satisfies
$$
\int_{\Omega}(\nabla u_*\nabla \varphi - c_p^{-p}u_*^{p-1}\varphi)dx=\int_{\Omega}(\nabla u_*\nabla \varphi - c_q^{-q}u_*^{q-1}\varphi)dx=0,
$$
and in particular
$$
\int_{\Omega}(c_q^{-q}u_*^{q-1}- c_p^{-p}u_*^{p-1})\varphi dx=0,
$$
for each $\varphi\in \W$. This implies $u_*(x)=\left(\frac{c_q^q}{c_p^p}\right)^{\frac{1}{q-p}}$ in $\Omega$, which is a contradiction.

Thus, the strict inequality \eqref{sineq} holds. Now, arguing as before we obtain
\begin{align*}
\Lambda_{p,q} & >\sup_{t>0}\left[\frac{at^2}{\sup_{\|u\|=t}\|u\|_p^p+\sup_{\|u\|=t}\|u\|_q^q-bt^q}\right]^{\frac{q-p}{q-2}}\\
&=\sup_{t>0}\left[\frac{a}{c_p^p t^{p-2}+(c_q^q-b)t^{q-2}}\right]^{\frac{q-p}{q-2}}\\
& =\frac{d_{p,q}}{ c_p^p \left( c_q^q -b \right)^\frac{2-p}{q-2}},
%=(ac_p^p)^{\frac{q-p}{2}}.
\end{align*}
and therefore \eqref{ineq2} is completely proved.

To conclude, let us show the validity of \eqref{ineq3}. At first, let us prove that, for each $t>0$,
\begin{equation}\label{limsupq}
\limsup_{q\rightarrow 4^-}\sup_{\|u\|=t}(\|u\|_p^p+\|u\|_q^q-bt^q)\leq \sup_{\|u\|=t}(\|u\|_p^p+\|u\|_4^4)-bt^4.
\end{equation}
Indeed, let $t\in (0,+\infty)$ and let  $q\in [q_0,4)$. Since $u\in \W \mapsto \|u\|_p^p+\|u\|_q^q$ is a sequentially weakly continuous and a $C^1$ functional with no non-zero critical point, there exists $u_q\in \W$, with $\|u_q\|=t$, such that
\begin{equation*}
\|u_q\|_p^p+\|u_q\|_q^q=\sup_{\|u\|\leq t}(\|u\|_p^p+\|u\|_q^q)=\sup_{\|u\|=t}(\|u\|_p^p+\|u\|_q^q).
\end{equation*}
In particular, taking the continuity of the function $m\in [1,4]\mapsto c_m^m$  into account (see \cite[Theorem 9]{afi}), one has
\begin{align*}
t^pc_p^p &\leq \sup_{\|u\|=t}(\|u\|_p^p+\|u\|_q^q)=\|u_q\|_p^p+\|u_q\|_q^q \\
&\leq |\Omega|^{\frac{q-p}{q}}\|u_q\|_q^p + \|u_q\|_q^q\\
&\leq t^p|\Omega|^{\frac{q-p}{q}}c_q^p+t^qc_q^q \leq \sup_{m\in [1,4]}\left(t^p|\Omega|^{\frac{m-p}{m}}c_m^p + t^m c_m^m\right)<+\infty.
\end{align*}
As a result, we can find two positive constants $K_1,K_2>0$ such that
\begin{equation*}
K_1\leq \|u_q\|_q\leq K_2, \quad \text{for each } q\in [q_0,4),
\end{equation*}
and hence,
\begin{equation}\label{limitq4}
\lim_{q\rightarrow 4^-}\|u_q\|_q^{4-q}=1.
\end{equation}
Since for each $q\in[q_0,4)$ one has
\begin{align*}
\sup_{\|u\|=t}(\|u\|_p^p+\|u\|_q^q)-bt^q &=\|u_q\|_p^p+\|u_q\|_q^q-bt^q\\
&\leq \|u_q\|_p^p + |\Omega|^{\frac{4-q}{4}}\|u_q\|_4^q-bt^q\\
&=\|u_q\|_p^p+\|u_q\|_4^4 - \|u_q\|_4^q\|u_q\|_4^{4-q}+|\Omega|^{\frac{4-q}{4}}\|u_q\|_4^q -bt^q\\
&\leq \sup_{\|u\|=t}(\|u\|_p^p+\|u\|_4^4) + \left\|u_q \right\|_4^q\left(  -|\Omega|^{-\frac{(4-q)^2}{4q}} \left\|u_q \right\|_q^{4-q}
+|\Omega|^{\frac{4-q}{4}}\right)  -bt^q,
\end{align*}
then, in view of $(\ref{limitq4})$, the limit \eqref{limsupq} easily follows. Thanks to the positivity in $[q_0,4)$ of the function
\begin{equation*}
h_t(q):=\frac{a}{\sup_{\|u\|=t}\left(\|u\|_p^p+\|u\|_q^q\right)-bt^q},
\end{equation*}
the limit $(\ref{limsupq})$  implies
\begin{equation*}
h_t(4)\leq \liminf_{q\rightarrow 4^-} h_t(q),
\end{equation*}
from which
\begin{equation*}
\Lambda_{p,4}=\sup_{t>0}(h_t(4))^{\frac{4-p}{2}}\leq \liminf_{q\rightarrow 4^-} \sup_{t>0}(h_t(q))^{\frac{q-p}{2}}=\liminf_{q\rightarrow
4^-}\Lambda_{p,q}.
\end{equation*}
\end{proof}

\begin{lemma}\label{Jlambda}
One has $\lambda<\Lambda_{p,q}$ if and only if  there exists $r>0$ such that
\begin{equation}\label{condizioniMP}
\inf_{\partial B(0,r)}{J_{\lambda,q}} >0,
\end{equation}
where $\partial B(0,r):=\{u\in\W: \left\| u\right\|=r\}$.
\end{lemma}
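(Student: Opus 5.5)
We argue by a direct computation, reducing both directions to the scalar function
$$
g(t):=\frac{at^2}{\sup_{\|u\|=t}\left(\|u\|_p^p+\|u\|_q^q\right)-bt^q},\qquad t>0,
$$
which is well defined and positive by Proposition \ref{ineq}. By definition $\Lambda_{p,q}=\sup_{t>0} g(t)^{\frac{q-p}{q-2}}$.

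First we rewrite the infimum of $J_{\lambda,q}$ on a sphere. For $\|u\|=r$,
$$
J_{\lambda,q}(u)=ar^2+br^q-\|u\|_q^q-\lambda\|u\|_p^p .
$$
Hence
$$
\inf_{\partial B(0,r)}J_{\lambda,q}=ar^2+br^q-\sup_{\|u\|=r}\left(\|u\|_q^q+\lambda\|u\|_p^p\right).
$$
Since $\lambda$ multiplies only the $L^p$ term, while $\Lambda_{p,q}$ involves the unweighted sum $\|u\|_p^p+\|u\|_q^q$, we use the scaling $u=sv$. The homogeneities give
$$
\|sv\|_q^q+\lambda\|sv\|_p^p=s^q\|v\|_q^q+\lambda s^p\|v\|_p^p .
$$
Choose $s$ so that $\lambda s^p=s^q$, i.e. $s=\lambda^{\frac{1}{q-p}}$. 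Then
$$
\|u\|_q^q+\lambda\|u\|_p^p=\lambda^{\frac{q}{q-p}}\left(\|v\|_q^q+\|v\|_p^p\right),\qquad \|v\|=t:=r\lambda^{-\frac{1}{q-p}} .
$$
Substituting $r=\lambda^{\frac1{q-p}}t$ yields
$$
\inf_{\partial B(0,r)}J_{\lambda,q}
=\lambda^{\frac{q}{q-p}}\left[a\lambda^{\frac{2-q}{q-p}}t^2+bt^q-\sup_{\|v\|=t}\left(\|v\|_p^p+\|v\|_q^q\right)\right].
$$
This is positive if and only if
$$
\lambda^{\frac{q-2}{q-p}}<g(t),\quad\text{i.e.}\quad \lambda< g(t)^{\frac{q-p}{q-2}} .
$$
Here we used $q>2$, $q>p$, and that the denominator of $g$ is positive.

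Since $r\mapsto t$ is a bijection of $(0,+\infty)$, the existence of $r>0$ with $\inf_{\partial B(0,r)}J_{\lambda,q}>0$ is equivalent to the existence of $t>0$ with $\lambda<g(t)^{\frac{q-p}{q-2}}$. This in turn is equivalent to $\lambda<\sup_{t>0}g(t)^{\frac{q-p}{q-2}}=\Lambda_{p,q}$, by the definition of supremum; no attainment of the sup is needed, since strict inequality with the sup gives some $t$ beating $\lambda$.

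The only delicate point is bookkeeping of exponents in the scaling. We should also note that the suprema over spheres are finite by the Sobolev embedding, including the case $q=4$, so all quantities are meaningful. No compactness is required, so the critical case $q=4$ is covered verbatim.
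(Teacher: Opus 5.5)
Your proof is correct and follows the paper's route: the rescaling $u=\lambda^{\frac{1}{q-p}}v$ turns $\|u\|_q^q+\lambda\|u\|_p^p$ into $\lambda^{\frac{q}{q-p}}(\|v\|_q^q+\|v\|_p^p)$ and maps $\partial B(0,r)$ onto $\partial B(0,t)$ with $t=r\lambda^{-\frac{1}{q-p}}$, reducing everything to the definition of $\Lambda_{p,q}$. The only difference is presentational: you compute $\inf_{\partial B(0,r)}J_{\lambda,q}$ exactly, which gives both implications at once, whereas the paper proves each direction with a separate one-sided estimate (an auxiliary $\eta$ for sufficiency, a maximizing sequence $u_n$ for necessity).
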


\begin{proof}
Suppose $\lambda<\Lambda_{p,q}$. Then, there exist $\eta,t>0$ such that
$$
\lambda+\eta <\left[\frac{at^2}{\sup_{\|u\|=t}\left(\|u\|_p^p+\|u\|_q^q\right)-bt^q}\right]^{\frac{q-p}{q-2}}.
$$
For every $u\in\W$, with $\|u\|=t$, one has
\begin{equation}\label{stimaIlambda}
    \begin{split}
        J_{\lambda,q}(\lambda^{\frac{1}{q-p}} u) & = a\lambda^{\frac{2}{q-p}}\| u\|^2 + b\lambda^{\frac{q}{q-p}}\|u\|^q
        - \lambda^{\frac{q}{q-p}}\| u\|^q_q -\lambda^{1+\frac{p}{q-p}}\| u\|^p_p\\
        &=\lambda^{\frac{2}{q-p}}\left[a\|u\|^2+ \lambda^{\frac{  q-2}{q-p}}\left(b\|u\|^q- \| u\|_q^q-\|u\|_p^p\right)\right]\\
        &\geq \lambda^{\frac{2}{q-p}}at^2\left[1-\lambda^{\frac{q-2}{q-p}}\frac{\sup_{\|v\|=t}(\| v\|_q^q+\|v\|_p^p)-bt^q}{at^2}\right]\\
        &\geq\lambda^{\frac{2}{q-p}}at^2\left[1-\left(\frac{\lambda}{\lambda+\eta}\right)^{\frac{q-2}{q-p}}\right]>0.
      \end{split}
    \end{equation}
Since $\lambda^{\frac{1}{q-p}}u$ is an arbitrary function of $\partial B(0,r)$, with $r=t\lambda^{\frac{1}{q-p}}$, it follows that $\inf_{\partial B(0,r)}J_{\lambda,q} >0$.

Conversely, assume that for some $r>0$, one has $\sigma:=\inf_{\partial B(0,r)}{J_{\lambda,q}} >0$. Put $t=r\lambda^{-\frac{1}{q-p}}$. For each $n\in
\N$, with $n\geq 2$, we can choose $u_n\in \partial B(0,t)$ such that
$$
0<\left(1-\frac{1}{n}\right)\left[\sup_{\|u\|=t}\left(\|u\|_p^p+\|u\|_q^q\right)-bt^q\right]<\|u_n\|_p^p+\|u_n\|_q^q-bt^q.
$$
Then,
\begin{equation*}
    \begin{split}
        \sigma\leq J_{\lambda,q}(\lambda^{\frac{1}{q-p}} u_n) & =\lambda^{\frac{2}{q-p}}\left[at^2- \lambda^{\frac{q-2}{q-p}}\left(\| u_n\|_q^q+\|u_n\|_p^p-bt^q\right)\right]\\
        &\leq \lambda^{\frac{2}{q-p}}\left[at^2-\lambda^{\frac{q-2}{q-p}}\left(1-\frac{1}{n}\right)\left(\sup_{\|u\|=t}\left(\|
        u\|_q^q+\|u\|_p^p\right)-bt^q\right)\right].
     \end{split}
\end{equation*}
It follows that
\begin{equation*}
\begin{split}
\lambda^{\frac{q-2}{q-p}} & \leq \frac{n}{n-1}\cdot \frac{1}{\sup_{\|u\|=t}\left(\|
        u\|_q^q+\|u\|_p^p\right)bt^q}\left(at^2 -\sigma\lambda^{-\frac{2}{q-p}}\right) \\
        &\leq \frac{n}{n-1}\left(  \sup_{t>0}\frac{at^2}{\sup_{\|u\|=t}\left(\|
        u\|_q^q+\|u\|_p^p\right)-bt^q} - \frac{\sigma\lambda^{-\frac{2}{q-p}}}{\sup_{\|u\|=t}\left(\|u\|_q^q+\|u\|_p^p\right)-bt^q}\right)
 \end{split}
\end{equation*}
for each $n\in \N$, with $n\geq 2$. Passing to the limit as $n\rightarrow +\infty$, we get
$$
\lambda^\frac{q-2}{q-p} \leq \Lambda_{p,q}^\frac{q-2}{q-p} - \frac{\sigma\lambda^{-\frac{2}{q-p}}}{\sup_{\|u\|=t}\left(\|u\|_q^q+\|u\|_p^p\right)-bt^q},
$$
from which we deduce that $\lambda<\Lambda_{p,q}$.
\end{proof}

\begin{lemma}\label{nehari}
If $p\in (1,2)$ and $\lambda<\Lambda_{p,q}$, then $\mathcal{N}_{\lambda,q}^+\neq \emptyset$,  $\mathcal{N}_{\lambda,q}^-\neq \emptyset$ and
$\mathcal{N}_{\lambda,q}=\mathcal{N}_{\lambda,q}^-\cup \mathcal{N}_{\lambda,q}^+$. In particular, if  $u\in \W\setminus\{0\}$, the equation
$J_{\lambda,q}(tu)=0$ admits:
\begin{itemize}
    \item[$(i)$]  a unique positive solution $t_1=t_1(p,q,u)$ if $b\|u\|\geq\|u\|_q^q$, and one has $t_1u\in \mathcal{N}_{\lambda,q}^+$;
    \item[$(ii)$] exactly two positive solutions $t_1=t_1(p,q,u)$ and $t_2=t_2(p,q,u)$, if $b\|u\|^q<\|u\|_q^q$, and one has
    $t_1<t_2$, $t_1u\in \mathcal{N}_{\lambda,q}^+$ and $t_2u\in \mathcal{N}_{\lambda,q}^-$.
\end{itemize}
\end{lemma}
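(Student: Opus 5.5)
We reduce everything to the fibering function along a ray. Fix $u\in\W\setminus\{0\}$ and write, for $t>0$,
$$
J_{\lambda,q}(tu)=t^p\left[ a t^{2-p}\|u\|^2 - t^{q-p}\left(\|u\|_q^q-b\|u\|^q\right)-\lambda\|u\|_p^p\right]=:t^p\,\psi_u(t).
$$
Since $\frac{d}{dt}J_{\lambda,q}(tu)=J'_{\lambda,q}(tu)(u)$, if $J_{\lambda,q}(tu)=0$ then
$$
J'_{\lambda,q}(tu)(tu)=t\frac{d}{dt}J_{\lambda,q}(tu)=t^{p+1}\psi_u'(t).
$$
So $tu\in\mathcal{N}_{\lambda,q}^{\pm}$ exactly when $\psi_u(t)=0$ and $\pm\psi_u'(t)>0$. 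Here $\psi_u(0^+)=-\lambda\|u\|_p^p<0$, and we split into cases according to the sign of $\|u\|_q^q-b\|u\|^q$. The statement of case $(i)$ reads $b\|u\|\ge \|u\|_q^q$; I will treat it as $b\|u\|^q\ge\|u\|_q^q$, the complement of $(ii)$.

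\textbf{Case $(i)$.} If $\|u\|_q^q\le b\|u\|^q$, then $\psi_u$ is strictly increasing, because $2-p>0$ and $q-p>0$ and the second coefficient is nonpositive. It also tends to $+\infty$. Hence it has exactly one zero $t_1$, and $\psi_u'(t_1)>0$ there, so $t_1u\in\mathcal{N}^+_{\lambda,q}$. This case needs no smallness of $\lambda$.

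\textbf{Case $(ii)$.} If $\|u\|_q^q>b\|u\|^q$, the function $\psi_u$ has the form $At^{2-p}-Bt^{q-p}-C$ with $A,B,C>0$ and $2-p<q-p$. Its derivative vanishes at exactly one point $t_{\max}$, so $\psi_u$ increases on $(0,t_{\max})$, decreases afterwards, and tends to $-\infty$. It therefore has zero, one or two zeros, and exactly two (with $\psi_u'(t_1)>0>\psi_u'(t_2)$) if and only if $\max_{t>0}\psi_u>0$. This is the main step, and it is where $\lambda<\Lambda_{p,q}$ enters through Lemma \ref{Jlambda}: there is $r>0$ with $\inf_{\partial B(0,r)}J_{\lambda,q}>0$. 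Choosing $t=r/\|u\|$ gives $J_{\lambda,q}(tu)>0$, hence $\psi_u(t)>0$. Thus the maximum is positive and two distinct zeros $t_1<t_2$ exist. The tangency $\psi_u(t)=\psi_u'(t)=0$ is also excluded, since it would require $\max\psi_u=0$.

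\textbf{Global conclusions.} Every $w\in\mathcal{N}_{\lambda,q}$ is $1\cdot w$ with $\psi_w(1)=0$. By the analysis above every zero of $\psi_w$ is simple, so $J'_{\lambda,q}(w)(w)\neq0$. This gives $\mathcal{N}_{\lambda,q}=\mathcal{N}^+_{\lambda,q}\cup\mathcal{N}^-_{\lambda,q}$.

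For nonemptiness, case $(i)$, or $t_1$ in case $(ii)$, applied to any $u\ne0$ yields $\mathcal{N}^+_{\lambda,q}\neq\emptyset$. For $\mathcal{N}^-_{\lambda,q}$ we need some $u$ with $\|u\|_q^q>b\|u\|^q$. Proposition \ref{ineqq} gives $c_q^q>b$, so some $u$ with $\|u\|=1$ has $\|u\|_q^q>b$, that is, $\mathcal{B}_q\ne\emptyset$. Case $(ii)$ then produces $t_2u\in\mathcal{N}^-_{\lambda,q}$.
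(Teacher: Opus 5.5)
Your proof is correct and follows the paper's fibering-map argument, with Lemma \ref{Jlambda} supplying a point where $J_{\lambda,q}(tu)>0$. The only difference is that you work with $\psi_u(t)=t^{-p}J_{\lambda,q}(tu)$, which is directly unimodal, whereas the paper reads off the sign pattern of $g_u'$ from the unimodal bracket $t^{1-p}g_u'(t)$. Your reading of case $(i)$ as $b\|u\|^q\ge\|u\|_q^q$ is what the paper's proof actually uses; the literal condition $b\|u\|\ge\|u\|_q^q$ would overlap with case $(ii)$.
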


\begin{proof}
Let $u\in \W\setminus\{0\}$ be such that $b \|u\|^q<\|u\|_q^q$ (such a $u$ exists because $b<c_q^q$) and consider the $C^1$-function
$g_u:(0,+\infty)\rightarrow \R$ defined by
\begin{equation}\label{fibermap}
g_u(t)=J_{\lambda,q}(tu), \quad \text{for all } t>0.
\end{equation}
Since $1<p<2<q$ and $b \|u\|^q<\|u\|_q^q$, it is an easy matter to see that $g_u(t)<0$, for $t$ small enough and also for $t$ large enough. Moreover,
since $\lambda <\Lambda_{p,q}$, by Lemma \ref{Jlambda} we know that for some $r>0$ one has $g_u\left(\frac{r}{\|u\|}\right)>0$. In particular, $g_u$
must be increasing in some interval. Consequently, since
$$
g_u'(t)=t^{p-1}\left(2at^{2-p}\|u\|^2+qt^{q-p}(b\|u\|^q-\|u\|_q^q)-\lambda p \|u\|_p^p)\right),
$$
and the function
$$
t\in (0,\infty) \mapsto 2at^{2-p}\|u\|^2+qt^{q-p}(b\|u\|^q-\|u\|_q^q)-\lambda p \|u\|_p^p
$$
is negative near $0$ and near $+\infty$, increasing in $(0,t_0)$ and decreasing in $(t_0,+\infty)$, where
$$
t_0=\left(\frac{2(2-p)}{q(q-p)}\cdot \frac{a\|u\|^2}{\|u\|_q^q-b\|u\|^q}\right)^{\frac{1}{q-2}},
$$
we infer that there exist $\tau_u,t_u>0$, with $\tau_u<t_0<t_u$, such that $g_u'$ is negative in $(0,\tau_u)$, positive in $(\tau_u,t_u)$ and
negative in $(t_u,+\infty)$. Therefore, $g_u$ admits a unique global maximum point at $t_u$, with $g_u(t_u)>0$, and, consequently, $g_u$ admits
exactly two zeros $t_1\in (\tau_u,t_u)$ and $t_2\in (t_u,+\infty)$, such that $g'(t_1)>0$ and $g'(t_2)<0$. In particular, $t_1u\in
\mathcal{N}_{\lambda,q}^+$ and $t_2u\in\mathcal{N}_{\lambda,q}^-$.

If $b\|u\|^q\geq \|u\|_q^q$, there exists $s_u$ such that $g'_u$ is negative in $(0,s_u)$ and positive in $(s_u,+\infty)$. Hence, $g_u$ admits a unique global minimum at $s_u$, and since $g_u(t)$ is negative for $t>0$ small and $g_u(t)>0$ for $t>0$ large, we infer that $g_u$ admits a unique zero $t_1\in (s_u,+\infty)$ such that  $g_u'(t_1)>0$. In particular, $t_1u\in \mathcal{N}_{\lambda,q}^+$.

Finally, let $u\in \mathcal{N}_{\lambda,q}$. Since $g_u(1)=0$, then, by what shown above, it follows that either $t_1=1$ or $t_2=1$ and so, in any case, $u\in \mathcal{N}_{\lambda,q}^-\cup \mathcal{N}_{\lambda,q}^+$.
\end{proof}

%%%%%%%%%%%%%%%%%%%%%%%%%%%%%%%%%%%%%%%%%%
\section{Minimization on $\mathcal{N}_{\lambda,q}^+$}\label{minNpiu}

\begin{lemma}\label{nehariinf}
Let $p\in (1,2)$ and $\lambda<\Lambda_{p,q}$.  Then,
$$
\inf_{\mathcal{N}_{\lambda,q}^+}I_{\lambda,q}\leq -a\frac{(q-2)(2-p)}{2pq}\sup_{u\in \mathcal{N}_{\lambda,q}^+}\|u\|^2<0,
$$
for each $q\in [q_0,4]$.
\end{lemma}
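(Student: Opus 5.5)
The plan is to bound $I_{\lambda,q}$ pointwise on $\mathcal{N}_{\lambda,q}^+$ by a negative multiple of $\|u\|^2$, using the two identities that define $\mathcal{N}_{\lambda,q}^+$: $J_{\lambda,q}(u)=0$ and $J'_{\lambda,q}(u)(u)>0$. Then take the infimum over $\mathcal{N}_{\lambda,q}^+$.

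For $u\in\mathcal{N}_{\lambda,q}$, the relation $J_{\lambda,q}(u)=0$ gives
$$\lambda\|u\|_p^p=a\|u\|^2+b\|u\|^q-\|u\|_q^q.$$
Substituting this into $I_{\lambda,q}$ yields
$$I_{\lambda,q}(u)=a\Big(\frac12-\frac1p\Big)\|u\|^2+\Big(\frac1q-\frac1p\Big)\big(b\|u\|^q-\|u\|_q^q\big).$$
Next, $J'_{\lambda,q}(u)(u)=2a\|u\|^2+qb\|u\|^q-q\|u\|_q^q-p\lambda\|u\|_p^p$. Subtracting $p\,J_{\lambda,q}(u)=0$, the condition $J'_{\lambda,q}(u)(u)>0$ becomes
$$(2-p)a\|u\|^2+(q-p)\big(b\|u\|^q-\|u\|_q^q\big)>0,$$
that is,
$$b\|u\|^q-\|u\|_q^q>-\frac{(2-p)a}{q-p}\|u\|^2.$$
Since $\frac1q-\frac1p<0$, multiplying this inequality reverses it. We get
$$I_{\lambda,q}(u)<a\|u\|^2\Big[\frac{p-2}{2p}+\frac{(q-p)(2-p)}{pq(q-p)}\Big]=a\|u\|^2\,\frac{(2-p)(2-q)}{2pq}=-a\frac{(q-2)(2-p)}{2pq}\|u\|^2.$$

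Now take any $u\in\mathcal{N}_{\lambda,q}^+$. Then $\inf_{\mathcal{N}_{\lambda,q}^+}I_{\lambda,q}\le I_{\lambda,q}(u)\le -c\|u\|^2$ with $c=a\frac{(q-2)(2-p)}{2pq}>0$. Taking the infimum of $-c\|u\|^2$ over $u\in\mathcal{N}_{\lambda,q}^+$ gives the claimed bound $-c\sup_{\mathcal{N}_{\lambda,q}^+}\|u\|^2$.

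The strict negativity follows because $\mathcal{N}_{\lambda,q}^+\neq\emptyset$ by Lemma \ref{nehari}, since $\lambda<\Lambda_{p,q}$. Its elements are nonzero, so the supremum is positive, possibly $+\infty$, in which case the infimum is $-\infty$ and the inequality still holds. The constants used are valid for all $q\in[q_0,4]$, including $q=4$.

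The only delicate point is the sign bookkeeping when combining the two Nehari relations. The argument needs nothing about existence of minimizers, only that $\mathcal{N}_{\lambda,q}^+$ is nonempty.
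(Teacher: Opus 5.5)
Your proof is correct and essentially the paper's: both combine $J_{\lambda,q}(u)=0$ with $J'_{\lambda,q}(u)(u)>0$ to get $I_{\lambda,q}(u)<-a\frac{(q-2)(2-p)}{2pq}\|u\|^2$ on $\mathcal{N}_{\lambda,q}^+$. The paper eliminates the $q$-terms (obtaining $a(q-2)\|u\|^2<\lambda(q-p)\|u\|_p^p$) where you eliminate the $p$-term, which is an equivalent linear combination; you also make explicit the use of $\mathcal{N}_{\lambda,q}^+\neq\emptyset$ (Lemma~\ref{nehari}) for strict negativity, which the paper leaves implicit.
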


\begin{proof}
Let $q\in [q_0,4]$ and let $u\in \mathcal{N}_{\lambda,q}^+$. Then, the following holds
\begin{align*}
& a\|u\|^2+b \|u\|^q -\|u\|_q^q-\lambda \|u\|_p^p=0, \quad \text{and} \\
& 2a\|u\|^2+q(b\|u\|^q-\|u\|_q^q)-p\lambda \|u\|_p^p >0.
\end{align*}
Equivalently,
\begin{align*}
&a\|u\|^2+b\|u\|^q-\|u\|_q^q-\lambda \|u\|_p^p=0, \quad  \text{and} \\
&a(q-2)\|u\|^2 < \lambda (q-p)\|u\|_p^p.
\end{align*}
Thus,
\begin{align*}
I_{\lambda,q}(u)=a\left(\frac{1}{2}-\frac{1}{q}\right)\|u\|^2-\lambda \left(\frac{1}{p}-\frac{1}{q}\right)\|u\|_p^p <
-a\frac{(q-2)(2-p)}{2pq}\|u\|^2,
\end{align*}
from which the conclusion follows.
\end{proof}

For each $q\in [q_0,4]$, we denote by
$$
f_q:[0,+\infty)\rightarrow [0,+\infty)
$$ 
the inverse of  the (strictly increasing) function
$$
t\in
[0+\infty) \mapsto at^2+bt^q.
$$
Moreover, we also denote by $\Psi_{\lambda,q},K_{\lambda,q},\widetilde{I}_{\lambda,q}:\W\rightarrow \R$ the functionals defined by
\begin{align}
\Psi_{\lambda,q}(u) &:= \|u\|_q^q+\lambda \|u\|_p^p, \nonumber\\
K_{\lambda,q}(u) &:= 2a(f_q(\Psi_{\lambda,q}(u))^2+qb(f_q(\Psi_{\lambda,q}(u))^q-q\|u\|_q^q-\lambda p\|u\|_p^p, \label{K}\\
\widetilde{I}_{\lambda,q}(u)&:=\frac{a}{2}(f_q(\Psi_{\lambda,q}(u))^2+\frac{b}{q}(f_q(\Psi_{\lambda,q}(u))^{q}-\frac{1}{q}\|u\|_q^q-\frac{\lambda}{p}\|u\|_p^p,
\label{Itilde}
\end{align}
for  each $u\in \W$. Notice that $f_q$ is $C^\infty$ in $(0,+\infty)$ and $(f_q)^\sigma$ is $C^1$ in $[0,+\infty)$ for each $\sigma\in [2,+\infty)$, with
$$
f_q'(y)=\frac{1}{2af_q(y)+qb(f_q(y))^{q-1}},
$$
for all $y\in (0,+\infty$), and, in particular,
\begin{equation*}
[(f_q)^2]'(y)=\frac{2}{2a+qb(f_q(y))^{q-2}}, \quad \text{and} \quad [(f_q)^q]'(y)=\frac{q(f_q(y))^{q-2}}{2a+qb(f_q(y))^{q-2}},
\end{equation*}
for all $y\in [0,+\infty)$. Since $u \mapsto \|u\|_m^m$ is $C^1$ in $\W$  for each $m\in (1,4]$, and sequentially weakly continuous in $\W$ for each $m\in [1,4)$, so are the functionals $K_{\lambda,q},\widetilde{I}_{\lambda,q}$. Moreover, by a direct calculation, one has (see page 6 of \cite{av})
\begin{equation}\label{Iprime}
\widetilde{I}_{\lambda,q}'(u)(\varphi)=\int_\Omega \left\{\left[q g_1(\Psi_{\lambda,q}(u))-1\right]u^{q-1}+\lambda
\left[pg_1(\Psi_{\lambda,q}(u))-1\right]u^{p-1}\right\}\varphi dx
\end{equation}
and
\begin{equation}\label{Kprime}
K_{\lambda,q}'(u)(\varphi)=\int_\Omega \left\{\left[q g_2\left(\Psi_{\lambda,q}(u)\right)-q^2\right]u^{q-1}+\lambda
\left[pg_2\left(\Psi_{\lambda,q}(u)\right)-p^2\right]u^{p-1}\right\}\varphi dx
\end{equation}
for each $u,\varphi\in\W$, where
\begin{equation}\label{g12}
g_1(y):=\frac{a+bf_q(y)^{q-2}}{2a+qbf_q(y)^{q-2}}, \quad \text{and} \quad g_2(y):=\frac{4a+q^2bf_q(y)^{q-2}}{2a+qbf_q(y)^{q-2}},
\end{equation}
for each $y\in [0,+\infty)$. The proof of the next lemma is along the same lines as the one of Lemma 8 of \cite{av}.

\begin{lemma}\label{KI}
For $q<4$, the functionals $\widetilde{I}_{\lambda,q},K_{\lambda,q}$ have no non-zero critical points.
\end{lemma}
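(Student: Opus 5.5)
The plan is to read the Euler--Lagrange equations of $\widetilde{I}_{\lambda,q}$ and $K_{\lambda,q}$ directly from \eqref{Iprime} and \eqref{Kprime}. The key observation is that the nonlocal factor depends on $u$ only through the number $\Psi_{\lambda,q}(u)$, so at a fixed critical point it is a constant. A critical point therefore satisfies a \emph{pointwise algebraic} relation between $u^{q-1}$ and $u^{p-1}$, rather than a differential equation. Throughout, $u^{m-1}$ is read as $|u|^{m-2}u$, which is the natural meaning for sign-changing $u$.

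Let $u\neq 0$ be a critical point of $\widetilde{I}_{\lambda,q}$. Then $\Psi_{\lambda,q}(u)>0$, and we set
\[
\alpha:=q\,g_1(\Psi_{\lambda,q}(u))-1,\qquad \beta:=\lambda\bigl(p\,g_1(\Psi_{\lambda,q}(u))-1\bigr).
\]
Since $\widetilde{I}_{\lambda,q}'(u)(\varphi)=0$ for every $\varphi\in\W$, in particular for all $\varphi\in C_c^\infty(\Omega)$, we get
\[
\alpha|u|^{q-2}u+\beta|u|^{p-2}u=0 \quad\text{a.e. in } \Omega .
\]
The coefficients $\alpha$ and $\beta$ cannot both vanish. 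Indeed, that would force $g_1=1/q$ and $g_1=1/p$ at the same point, which is impossible because $p<q$.

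There are two cases. If $\alpha=0$, then $\beta\neq 0$, so $u=0$ a.e., a contradiction. If $\alpha\neq 0$, then on the set $\{u\neq 0\}$ we have $|u|^{q-p}=-\beta/\alpha$. This quantity must be positive, since otherwise again $u=0$. Consequently $|u|$ takes only the two values $0$ and $c:=(-\beta/\alpha)^{1/(q-p)}$ a.e. Now $|u|\in\W$, and by Stampacchia's theorem $\nabla|u|=0$ a.e. on each level set $\{|u|=0\}$ and $\{|u|=c\}$. Hence $\nabla |u|=0$ a.e. in $\Omega$, so $\||u|\|=0$ and thus $u=0$, a contradiction.

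The argument for $K_{\lambda,q}$ is identical, with
\[
\alpha=q\,g_2-q^2,\qquad \beta=\lambda\bigl(p\,g_2-p^2\bigr).
\]
Simultaneous vanishing would require $g_2=q$ and $g_2=p$ at once, which is again impossible.

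The only delicate points are, first, the legitimacy of passing from the weak identity to the a.e. identity, which is immediate because the integrand multiplying $\varphi$ is an $L^{4/3}$ function; and second, the two-valued-function step, which is the heart of the matter and is handled by Stampacchia's theorem as above. The hypothesis $q<4$ is used only to ensure that the functionals have the regularity and representation stated before the lemma, as in Lemma 8 of \cite{av}. The algebraic contradiction itself does not depend on it.
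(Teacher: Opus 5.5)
Your proposal is correct and takes essentially the same route as the paper. The paper defers to Lemma 8 of \cite{av} and uses the identical reasoning inside the proof of Theorem \ref{subcrit}: the derivative formulas \eqref{Iprime}--\eqref{Kprime} reduce criticality to a pointwise relation $\alpha|u|^{q-2}u+\beta|u|^{p-2}u=0$, whose coefficients cannot both vanish since $p\neq q$, and this forces $u=0$. Your Stampacchia argument for the two-valued function $|u|\in\{0,c\}$ simply makes rigorous the step the paper states as ``$u$ is constant in $\Omega$, hence $u=0$''.
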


Let us now introduce the following sets:
\begin{equation}\label{Alambda}
\begin{split}
\mathcal{A}_{\lambda,q}&:=\{u\in \W: J_{\lambda,q}(u)\leq 0\},\smallskip\\
\mathcal{A}_{\lambda,q}^*&:=\left\{u\in \mathcal{A}_{\lambda,q}: K_{\lambda,q}(u)\geq 0 \right\}.
\end{split}
\end{equation}

\begin{lemma}\label{astar}
Assume $q<4$, $p\in (1,2)$ and $\lambda <\Lambda_{p,q}$. The sets $\mathcal{A}_{\lambda,q}$ and $\mathcal{A}_{\lambda,q}^*$  are sequentially weakly closed and
\begin{equation}\label{coer}
\lim_{u\in \mathcal{A}_{\lambda,q}^*,\atop \|u\|\rightarrow +\infty }\widetilde{I}_{\lambda,q}(u)=+\infty.
\end{equation}
\end{lemma}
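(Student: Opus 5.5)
The plan is to treat the two claims separately: first the weak closedness, using only the continuity properties of $J_{\lambda,q}$ and $K_{\lambda,q}$ recorded above; then the coercivity, by eliminating the $L^q$-term of $\widetilde{I}_{\lambda,q}$ through the constraint $K_{\lambda,q}\ge 0$ and comparing $\|u\|$ with $f_q(\Psi_{\lambda,q}(u))$.

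\emph{Weak closedness.} Write $J_{\lambda,q}(u)=a\|u\|^2+b\|u\|^q-\Psi_{\lambda,q}(u)$.
\begin{itemize}
\item Since $q<4$ and $p<2$, both $u\mapsto\|u\|_q^q$ and $u\mapsto\|u\|_p^p$ are sequentially weakly continuous on $\W$ by the compact embeddings. Hence $\Psi_{\lambda,q}$ is sequentially weakly continuous.
\item The map $u\mapsto a\|u\|^2+b\|u\|^q$ is sequentially weakly lower semicontinuous, because the norm is.
\item Therefore $J_{\lambda,q}$ is sequentially weakly lower semicontinuous, and its sublevel set $\mathcal{A}_{\lambda,q}=\{J_{\lambda,q}\le 0\}$ is sequentially weakly closed.
\item As observed before Lemma \ref{KI}, $K_{\lambda,q}$ is sequentially weakly continuous, so $\{K_{\lambda,q}\ge 0\}$ is sequentially weakly closed.
\item $\mathcal{A}^*_{\lambda,q}$ is the intersection of these two sets, hence also sequentially weakly closed.
\end{itemize}

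\emph{Coercivity.} For $u\in\mathcal{A}^*_{\lambda,q}$ set $s:=f_q(\Psi_{\lambda,q}(u))$, so that $as^2+bs^q=\Psi_{\lambda,q}(u)$.

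First compare $s$ with $\|u\|$. The condition $J_{\lambda,q}(u)\le 0$ reads $a\|u\|^2+b\|u\|^q\le \Psi_{\lambda,q}(u)=as^2+bs^q$. Since $t\mapsto at^2+bt^q$ is strictly increasing, this gives $\|u\|\le s$. Thus $s\to+\infty$ whenever $\|u\|\to+\infty$ in $\mathcal{A}^*_{\lambda,q}$.

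Next use $K_{\lambda,q}(u)\ge 0$ to eliminate the $L^q$-term:
$$
\|u\|_q^q\le \frac{2as^2+qbs^q-\lambda p\|u\|_p^p}{q}.
$$
Substituting into \eqref{Itilde}, the $s^q$-terms cancel exactly:
\begin{equation*}
\widetilde{I}_{\lambda,q}(u)\ge a\Big(\frac12-\frac{2}{q^2}\Big)s^2-\lambda\Big(\frac1p-\frac{p}{q^2}\Big)\|u\|_p^p .
\end{equation*}
The coefficient $\frac12-\frac2{q^2}$ is positive because $q>2$.

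Finally, the definition \eqref{costanteimm} gives $\|u\|_p^p\le c_p^p\|u\|^p\le c_p^p s^p$. Hence
$$
\widetilde{I}_{\lambda,q}(u)\ge a\Big(\frac12-\frac{2}{q^2}\Big)s^2-C s^p .
$$
Since $p<2$, the right-hand side tends to $+\infty$ as $s\to+\infty$, which proves \eqref{coer}.

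\emph{Main obstacle.} The only delicate point is combining the two constraints correctly. Without $K_{\lambda,q}\ge 0$, the negative term $-\frac1q\|u\|_q^q$ is not controlled by the $s^q$-term, and coercivity would fail. The cancellation of $b s^q$ in the substitution above is what makes the argument work. The assumption $\lambda<\Lambda_{p,q}$ is not needed for these two properties.
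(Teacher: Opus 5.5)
Your proof is correct and follows the paper's argument essentially step for step. Weak closedness comes from the weak lower semicontinuity of $J_{\lambda,q}$ and the weak continuity of $K_{\lambda,q}$; coercivity comes from using $J_{\lambda,q}(u)\le 0$ to get $f_q(\Psi_{\lambda,q}(u))\ge\|u\|$ and $K_{\lambda,q}(u)\ge 0$ to eliminate $\|u\|_q^q$. The only cosmetic difference is that you keep the final lower bound in the variable $s=f_q(\Psi_{\lambda,q}(u))$, whereas the paper first replaces $s^2$ by $\|u\|^2$ and then uses $\|u\|_p^p\le c_p^p\|u\|^p$; both yield the same conclusion.
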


\begin{proof}
Since $J_{\lambda,q}$ is sequentially weakly lower semicontinuous  and $K_{\lambda,q}$ is sequentially weakly continuous, then $\mathcal{A}_{\lambda,q}$ and $\mathcal{A}_{\lambda,q}^*$ are sequentially weakly closed.

To finish the proof, it remains to show \eqref{coer}. Indeed, if $u\in \mathcal{A}_{\lambda,q}^*$, one has
\begin{equation}\label{ineq1}
a\|u\|^2+b\|u\|^q\leq \|u\|_q^q+\lambda \|u\|_p^p,
\end{equation}
which implies
\begin{equation}\label{ineqfq}
f_q(\Psi_{\lambda,q}(u))\geq \|u\|.
\end{equation}
In addition, one has
$$
2a(f_q(\Psi_{\lambda,q}(u))^2+qb(f_q(\Psi_{\lambda,q}(u))^q-q\|u\|_q^q-\lambda p\|u\|_p^p\geq 0,
$$
or, equivalently,
$$
\|u\|_q^q\leq \frac{2a}{q}(f_q(\Psi_{\lambda,q}(u))^2+b(f_q(\Psi_{\lambda,q}(u))^q-\lambda \frac{p}{q}\|u\|_p^p.
$$
Taking \eqref{ineqfq} and the fact that $\frac{1}{2}-\frac{2}{q^2}>0$ into account, the previous inequality implies
\begin{align*}
\widetilde{I}_{\lambda,q}(u) &\geq
a\left(\frac{1}{2}-\frac{2}{q^2}\right)(f_q(\Psi_{\lambda,q}(u))^2-\lambda\left(\frac{1}{p}-\frac{p}{q^2}\right)\|u\|_p^p\\
&\geq a\left(\frac{1}{2}-\frac{2}{q^2}\right)\|u\|^2-\lambda\left(\frac{1}{p}-\frac{p}{q^2}\right)\|u\|_p^p\\
& \geq a\left(\frac{1}{2}-\frac{2}{q^2}\right)\|u\|^2-\lambda c_p^p\left(\frac{1}{p}-\frac{p}{q^2}\right)\|u\|^p,
\end{align*}
from which the conclusion follows.
\end{proof}

\begin{proof}[Proof of Theorem \ref{subcrit}]
Since $\lambda <\Lambda_{p,q}$, by Lemma \ref{astar}, $\mathcal{A}_{\lambda,q}^*$ is weakly closed and $\widetilde{I}_{\lambda,q}$ is coercive on $\mathcal{A}_{\lambda,q}^*$. Moreover, by Lemma \ref{nehari} one has also that $\mathcal{N}_{\lambda,q}^+\neq \emptyset$. We notice also that
\begin{equation}\label{eqN}
\mathcal{N}_{\lambda,q}^+ = \mathcal{N}_{\lambda,q}\cap \mathcal{A}_{\lambda,q}^*.
\end{equation}
%\red{[Maybe ? To have also the second inclusion we need the strict inequality $>$ that we do not have, no?]}
Indeed,  $u\in \mathcal{N}_{\lambda,q}^+$ if and only if
\begin{align*}
&a\|u\|^2+b\|u\|^q = \|u\|_q^q+\lambda \|u\|_p^p = \Psi_{\lambda,q}(u), \quad \text{and}\\
&2a\|u\|^2+qb\|u\|^q-q\|u\|_q^q-\lambda p\|u\|_p^p> 0,
\end{align*}
and the above relations are, in turn, equivalent to
\begin{align*}
&f_q(\Psi_{\lambda,q}(u))=\|u\|, \quad\text{and}\\
&2a(f_q(\Psi_{\lambda,q}(u))^2+qb(f_q(\Psi_{\lambda,q}(u))^q-q\|u\|_q^q-\lambda p\|u\|_p^p>0,
\end{align*}
i.e., $u\in  \mathcal{N}_{\lambda,q}\cap \mathcal{A}_{\lambda,q}^*$.
%Now, it is easy to see that
%\red{\begin{equation*}
%I_{\lambda,q}(u)\leq \lambda \frac{q-p}{q}\left[\frac{1}{2}-\frac{1}{p}\right]\|u\|_p^p<0, \quad \text{for each }u\in \mathcal{N}_{\lambda,q}^+.
%\end{equation*}
%[già stimato nel Lemma \ref{nehariinf} con norma $\left\|u \right\|^2$, no?]}
Considering that
\begin{equation}\label{II}
\widetilde{I}_{\lambda,q}=I_{\lambda,q} \quad\text{on } \mathcal{N}_{\lambda,q}^+,
\end{equation}
and taking also Lemma \ref{nehariinf} into account, we derive the existence of $u_{\lambda,q}\in  \mathcal{A}_{\lambda,q}^*$ such that
\begin{equation}\label{II1}
\widetilde{I}_{\lambda,q}(u_{\lambda,q})=\inf_{\mathcal{A}_{\lambda,q}^*}\widetilde{I}_{\lambda,q}\leq
\inf_{\mathcal{N}_{\lambda,q}^+}\widetilde{I}_{\lambda,q}=\inf_{\mathcal{N}_{\lambda,q}^+}I_{\lambda,q}<0.
\end{equation}
In particular, one has $u_{\lambda,q}\neq 0$. Moreover, since $I_{\lambda,q}(u)=I_{\lambda,q}(|u|)$ and $u\in \mathcal{A}_{\lambda,q}^*$ implies
$|u|\in \mathcal{A}_{\lambda,q}^*$, we can also assume $u_{\lambda,q}$ non-negative.

Now, let us show that
\begin{equation}\label{un}
u_{\lambda,q}\in \mathcal{N}_{\lambda,q}.
\end{equation}
Indeed, if not, then
\begin{equation*}
a\|u_{\lambda,q}\|^2+b\|u_{\lambda,q}\|^{q}<\|u_{\lambda,q}\|_4^4+\lambda \|u_{\lambda,q}\|_p^p,
\end{equation*}
and so, in particular, $u_{\lambda,q}$ would be a local minimum of the restriction of $\widetilde{I}_{\lambda,q}$ to the set
$$
\{u\in \W\setminus \{0\}:K_{\lambda,q}(u)\geq 0\}.
$$
Now, notice that, by Lemma \ref{KI}, the set
$$\{u\in \W\setminus \{0\}: K_{\lambda,q}(u)=0\}
$$
is a $C^1$-manifold. Therefore, by the Lagrange Multipliers Theorem, we infer that there exists $\mu \in \R$ such that
%\begin{equation}\label{eqcrit} %\label{crit}
$$
\widetilde{I}_{\lambda,q}'(u_{\lambda,q})(\varphi)+\mu K_{\lambda,q}'(u_{\lambda,q})(\varphi)=0
$$
%\end{equation}
for each $\varphi \in \W$. By $\eqref{Iprime}$ and $\eqref{Kprime}$ we then get
$$
\nu_1 u_{\lambda,q}^{q-1}+\lambda \nu_2 u_{\lambda,q}^{p-1}=0 \quad \text{a.e. in }\Omega,
$$
where
\begin{align*}
\nu_1 &:=[qg_1(\Psi_{\lambda,q}(u_{\lambda,q}))-1 + \mu(qg_2(\Psi_{\lambda,q}(u_{\lambda,q}))-q^2)],\\
\nu_2
&:=[pg_1(\Psi_{\lambda,q}(u_{\lambda,q}))-1 + \mu(pg_2(\Psi_{\lambda,q}(u_{\lambda,q}))-p^2)].
\end{align*}
If at least one of the numbers $\nu_1,\nu_2$ were non-zero, the previous equality would imply that $u_{\lambda,q}\in \W$ is constant in $\Omega$, and then $u_{\lambda,q}=0$, a contradiction. Therefore $\nu_1=\nu_2=0$, from which it follows
\begin{align*}
g_1(\Psi_{\lambda,q}(u_{\lambda,q}))+\mu g_2(\Psi_{\lambda,q}(u_{\lambda,q})) &=\mu q+\frac{1}{q},\\
g_1(\Psi_{\lambda,q}(u_{\lambda,q}))+\mu g_2(\Psi_{\lambda,q}(u_{\lambda,q}))&=\mu p+\frac{1}{p},
\end{align*}
that implies $\mu= \frac{1}{pq}$ and
\begin{equation*}
g_1(\Psi_{\lambda,q}(u_{\lambda,q}))+\frac{1}{pq} g_2(\Psi_{\lambda,q}(u_{\lambda,q}))=\frac{1}{p}+\frac{1}{q}.
\end{equation*}
Using the definition of $g_1,g_2$ given in \eqref{g12},  we get
\begin{equation*}
pq \frac{a+bf_q(\Psi_{\lambda,q}(u_{\lambda,q}))^{q-2}}{2a+qbf_q(\Psi_{\lambda,q}(u_{\lambda,q}))^{q-2}}+\frac{4a+q^2bf_q(\Psi_{\lambda,q}(u_{\lambda,q}))^{q-2}}{2a+qbf_q(\Psi_{\lambda,q}(u_{\lambda,q}))^{q-2}}=p+q,
\end{equation*}
from which one easily infers that $p=2$, a contradiction. Therefore, condition $\eqref{un}$ holds and from Lemma \ref{nehari} we also have
\begin{equation}\label{un+-}
u_{\lambda,q}\in \mathcal{N}_{\lambda,q}^+\cup \mathcal{N}_{\lambda,q}^-.
\end{equation}
Moreover, condition $\eqref{un}$ also implies
$$
f_q(\Psi_{\lambda,q}(u_{\lambda,q}))=\|u_{\lambda,q}\|
$$
and, consequently, from $u_{\lambda,q}\in
\mathcal{A}_{\lambda,q}^*$ and $\eqref{un+-}$, it follows
\begin{equation*}
u_{\lambda,q}\in \mathcal{N}_{\lambda,q}^+ \ \ (\subseteq \mathcal{A}_{\lambda,q}^*).
\end{equation*}
Hence,
\begin{equation*}
\widetilde{I}_{\lambda,q}(u_{\lambda,q})=\inf_{\mathcal{A}_{\lambda,q}^*}\widetilde{I}_{\lambda,q} =\inf_{\mathcal{N}_{\lambda,q}^+}\widetilde{I}_{\lambda,q}
\end{equation*}
that, jointly to \eqref{II1}, gives \eqref{infI}.

A standard application of the Lagrange Multipliers Theorem shows that  $u_{\lambda,q}$ is a critical point of $I_{\lambda,q}$. Finally, since $u_{\lambda,q}$ is non-zero and non-negative, by the Strong Maximum Principle, we also infer that $u_{\lambda,q}$ is positive in $\Omega$. Therefore,  $u_{\lambda,q}$ is a solution of problem \eqref{problem}. The proof is now complete.
\end{proof}

\medskip

\begin{proof}[Proof of Theorem \ref{crit}]
Since $\lambda<\Lambda_{p,4}$, by $\eqref{ineq3}$ of Lemma \ref{Lambdabound}, we can find a sequence $\{q_n\}$ in $(q_0,4)$ such that $q_n\rightarrow 4^-$ and $\lambda <\Lambda_{p,q_n}$, for each $n\in \N$. To simplify the notations, we denote the functional $I_{\lambda,q_n}$ by $I_n$, and the set
$\mathcal{N}_{\lambda,q_n}^+$ by $\mathcal{N}_{n}^+$. By Theorem \ref{subcrit}, for each $n\in \N$ there exists a solution $u_n\in
\mathcal{N}_{n}^+$ of problem $(P_{\lambda,q_n})$, satisfying
\begin{equation}\label{Inun}
I_n(u_n) = a\left(\frac{1}{2}-\frac{1}{q_n}\right)\|u_n\|^2-\lambda\left(\frac{1}{p}-\frac{1}{q_n}\right)\|u_n\|_p^p=\inf_{u\in
\mathcal{N}_{n}^+}I_n(u)<0.
\end{equation}
By \eqref{Inun} it follows the boundedness of $\{u_n\}$. Thus, up to a subsequence, we can assume that there exist $l\in [0,+\infty)$ and $u_*\in\W$, such that
$$
\begin{array}{ll}
\|u_n\|  \to l, & \smallskip\\
u_n\rightharpoonup u_* \text{ in } \W, & \quad\text{ as } n\to +\infty. \smallskip\\
u_n \rightarrow u_*  \text{ in } L^m(\Omega) \ \text{ for each } m\in [1,4), &
\end{array}
$$
This implies, via Lebesgue's Dominated Convergence Theorem, that
\begin{equation}\label{criticalustar}
0=\lim_{n\rightarrow +\infty}I_n'(u_n)(\varphi)=(a+bl^2)\int_\Omega \nabla u_*\nabla \varphi dx-\int_\Omega (u_*)^3\varphi dx-\lambda\int_\Omega (u_*)^{p-1}\varphi dx,
\end{equation}
for each $\varphi \in C_0^{\infty}(\Omega)$. By density, the previous equality actually holds for each $\varphi \in \W$ and in particular for $\varphi=u_*$, so that one has
\begin{eqnarray}\label{equ*}
0=(a+bl^2)\|u_*\|^2-\|u_*\|_4^4-\lambda \|u_*\|_p^p.
\end{eqnarray}
In addition, by the Concentration-Compactness Lemma, there exist two nonnegative measures $d\mu,d\nu$, two sequences $\{\mu_k\}_{k\in \N}$, $\{\nu_k\}_{k\in \N}$ in $(0,+\infty)$, and a sequence $\{x_k\}_{k\in \N}$ in $\overline{\Omega}$ such that
\begin{equation}\label{conv}
\left\{
\begin{array}{ll}
|\nabla u_n|^2 \rightarrow d\mu \geq |\nabla u_*|^2+\displaystyle\sum_{k\in A}\mu_k\delta_{x_k} &\text{weakly}^*-\text{in the sense of measures},\\
u_n^4 \rightarrow d\nu= u_*^4+\displaystyle\sum_{k\in A}\nu_k\delta_{x_k},  & \text{weakly}^*-\text{in the sense of measures},
\end{array}\right.
\end{equation}
with $\mu_k^2 S^{-2}\geq \nu_k>0$ for each $k\in A$, where $A\subseteq\N$ is an at most countable set. It follows, in particular, that
\begin{equation}\label{lineq}
l^2\geq \|u_*\|^2+\sum_{k\in A}\mu_k.
\end{equation}
We claim that $A=\emptyset$. Indeed, assume that $A\neq\emptyset$, fix $k\in A$, %$\mu_k>0$ for some $k\in \N$
and let $\rho>0$. Testing $I_n'(u_n)(\varphi)=0$ with $\varphi=u_n\psi_\rho$, where $\psi_\rho:\R^4 \rightarrow [0,1]$ is a $C^1$-function such that
$\sup_\Omega |\nabla \psi_\rho|\leq \frac{2}{\rho}$, $\psi_\rho(x)=1$ if $|x-x_k|\leq \rho$ and $\psi_\rho(x)=0$ if $|x-x_k|\geq 2\rho$, and letting
$n\rightarrow +\infty$ and $\rho\rightarrow 0$, we get (see the proof of Theorem 3 of \cite{av})
\begin{equation*}
\mu_k\geq S^2(a+bl^2).
\end{equation*}
Plugging this inequality in $(\ref{lineq})$, it follows
\begin{eqnarray}\label{lineq1}
l^2\geq \frac{\|u_*\|^2}{1-bS^2}+\frac{aS^2}{1-bS^2}
\end{eqnarray}
Moreover, passing to the limit as $n\rightarrow +\infty$ in \eqref{Inun}, one gets
\begin{equation*}
l^2\leq \lambda \frac{4-p}{ap}\|u_*\|_p^p\leq  \lambda \frac{4-p}{ap}c_p^p\|u_*\|^p,
\end{equation*}
which, together with \eqref{lineq1}, gives
\begin{equation*}
\frac{\|u_*\|^2}{1-bS^2}+\frac{aS^2}{1-bS^2}\leq \lambda \frac{4-p}{ap}c_p^p\|u_*\|^p.
\end{equation*}
This inequality in turn implies $u_*\neq0$ and
\begin{align*}
\lambda & \geq \frac{ap}{c_p^p(4-p)(1-bS^2)}\left(\|u_*\|^{2-p}+aS^2 \|u_*\|^{-p}\right)\\
&\geq  \frac{ap}{c_p^p(4-p)(1-bS^2)} \inf_{\tau>0}\left(\tau^{2-p}+aS^2 \tau^{-p}\right)\\
 &=\frac{2a^\frac{4-p}{2}}{c_p^p(4-p)(1-bS^2)}\left(\frac{pS^2}{2-p}\right)^{\frac{2-p}{2}}
\end{align*}
against the choice of $\lambda$. Therefore it must be $A=\emptyset$ and from \eqref{conv} it follows that
\begin{equation}\label{limitun}
\lim_{n\rightarrow +\infty}\|u_n\|_4^4=\|u_*\|_4^4.
\end{equation}
Now, for every $n\in\N$ one has
\begin{align*}
0  &=I'_n(u_n)(u_n)=a\|u_n\|^2+b\|u_n\|^{q_n} - \|u_n\|_{q_n}^{q_n}-\lambda\|u_n\|_p^p\\
&\geq a\|u_n\|^2+b\|u_n\|^{q_n} -|\Omega|^{\frac{4-q_n}{4}}\|u_n\|_4^{q_n} -\lambda\|u_n\|_p^p
\end{align*}
and hence
$$
a\|u_n\|^2+b\|u_n\|^{q_n} \leq|\Omega|^{\frac{4-q_n}{4}}\|u_n\|_4^{q_n} +\lambda\|u_n\|_p^p.
$$
Passing to the limit as $n\to +\infty$ and taking account of \eqref{limitun}, we then get
\begin{equation*}
(a+bl^2)l^2\leq \|u_*\|_4^4+\lambda \|u_*\|_p^p
\end{equation*}
which, together with \eqref{equ*} and \eqref{lineq} implies
$$
\|u_*\|^2 \leq l^2\leq \|u_*\|^2,
$$
i.e., $u_n\rightarrow u_*$ strongly in $\W$. From $l=\|u_*\|$ and \eqref{criticalustar}, we conclude that $u_*$ is a non-negative critical point of $I_{\lambda,4}$.

It remains to show that $u_*\in \mathcal{N}_{\lambda,4}^+$ and
\begin{equation*}
I_{\lambda,4}(u_*)=\inf_{\mathcal{N}_{\lambda,4}^+}I_{\lambda,4}<0.
\end{equation*}
To this end, observe at first  that by the strong convergence of $\{u_n\}$ to $u_*$, it promptly follows
\begin{equation}\label{limitun1}
\lim_{n\rightarrow +\infty}I_n(u_n)=I_{\lambda,4}(u_*).
\end{equation}
Now, let $u\in \mathcal{N}_{\lambda,4}^+$, and let $t_n:=t_1(p,q_n,u)$ be as in Lemma $\ref{nehari}$. One has $t_n u\in \mathcal{N}_n^+$, that
is
\begin{equation}\label{un4}
\begin{split}
&at_n^2 \|u\|^2+t_n^{q_n}(b\|u\|^{q_n}-\|u\|_{q_n}^{q_n})-\lambda t_n^p \|u\|_p^p=0,\\
&2at_n^2\|u\|^2+ q_n t_n^{q_n}(b\|u\|^{q_n}-\|u\|_{q_n}^{q_n})-\lambda pt_n^p \|u\|_p^p>0.
\end{split}
\end{equation}
By using the above relations and the fact that $t^{q_n}<t^2$ for $t\in(0,1)$ and $\left\| u\right\|^{q_n} < 1+ \left\| u\right\|^4 $, we deduce the following two-sided estimate
$$
\min\{1,\eta_u\}\leq t_n < \left[ \frac{\lambda (q_0-p)c_p^p}{a(q_0-2)}\right]^\frac{1}{2-p}\cdot\frac{1}{\|u\|},
$$
where $\eta_u:=\left[ \frac{\lambda\left\|u\right\|_p^p }{a\left\|u \right\|^2 +b(1+\left\| u\right\|^4 ) }\right]^\frac{1}{2-p}$ is the (unique) positive solution of the equation
$$
\left( a\|u\|^2+b (1+\|u\|^4)\right)\eta^2 -\lambda \|u\|_p^p\eta^p=0.
$$
Therefore, we can assume that $t_n\rightarrow t_*>0$, and passing to the limit as $n\rightarrow +\infty$ in \eqref{un4}, we obtain
\begin{equation*}
\begin{split}
&at_*^2 \|u\|^2+t_*^4(b\|u\|^{4}-\|u\|_{4}^{4})-\lambda t_*^p \|u\|_p^p=0,\\
&2at_*^2\|u\|^2+4t_*^4(b\|u\|^{4}-\|u\|_{4}^{4})-\lambda pt_*^p \|u\|_p^p\geq 0.
\end{split}
\end{equation*}
Since $\lambda <\Lambda_{p,4}$ and $u\in \mathcal{N}_{\lambda,4}^+$, by Lemma \ref{nehari} we know that the previous inequality is strict and that
$t_*=1$. Therefore, since one has
\begin{equation*}
I_n(u_n)\leq I_n(t_n u), \quad \text{for all } n\in \N,
\end{equation*}
in view of \eqref{limitun1} we get, passing to the limit as $n\rightarrow +\infty$,
\begin{equation*}
I_{\lambda,4}(u_*) \leq I_{\lambda,4}(t^*u)=I_{\lambda,4}(u).
\end{equation*}
Consequently, being $u$ an arbitrary function in $\mathcal{N}_{\lambda,4}^+$, we have in view of Lemma \ref{nehariinf}
\begin{equation*}
I_{\lambda,4}(u_*)\leq \inf_{\mathcal{N}_{\lambda,4}^+}I_{\lambda,4}<0.
\end{equation*}
This means that $u_*\neq 0$ which, together with \eqref{equ*} and $l=\|u_*\|$, implies $u_*\in \mathcal{N}_{\lambda,4}$. Moreover, since
$$
2a\left\| u_n\right\|^2 + bq_n \left\| u_n\right\|^{q_n} - q_n\left\|u_n \right\|_{q_n}^{q_n}-\lambda p \left\| u_n\right\|_p^p >0, \quad \text{for all } n\in\N,
$$
taking the limit as $n\to +\infty$ and in view of Lemma \ref{nehari}, one has $u_*\in\mathcal{N}_{\lambda,4}^+$. Of course, being $u^*$ non-zero and
non-negative, one has that $u^*$ is positive in $\Omega$ by the Strong Maximum Principle. Therefore, $u_{\lambda,4}:=u_*$ is a solution of $(P_{\lambda,4})$. The proof is now complete.
\end{proof}

%%%%%%%%%%%%%%%%%%%%%%%%%%%%%%%%%%%%%%%
\section{Minimization on $\mathcal{N}_{\lambda,q}^-$}\label{minNmeno}
%Let us introduce the number
%\begin{equation*}
%\widehat{\Lambda}_{p,q}:=\frac{p}{2}\cdot\frac{d_{p,q}}{\widetilde{c}_{p,q}^p(c_q^q-b)^\frac{2-p}{q-2}},
%\end{equation*}
%where $d_{p,q}$ is defined in \eqref{dpq} and
%\begin{equation*}
%\widetilde{c}_{p,q}:=\sup_{u\in\mathcal{B}_q}\|u\|_p,
%\end{equation*}
%$\mathcal{B}_q$ being defined in \eqref{bq}.
In what follows, $\widehat{\Lambda}_{p,q}$ is the number defined in \eqref{lambdahatpq}.

\begin{lemma}\label{L*lim}
If the function $\psi_p:(0,S^{-2})\rightarrow \R$ defined by 
$$
\psi_p(\beta):=\sup_{\|u\|=1,\, \|u\|_4^4>\beta}\|u\|_p
$$ 
is
continuous at $b$, then
\begin{equation*}
\widehat{\Lambda}_{p,4}\leq \liminf_{q\rightarrow 4^-}  \widehat{\Lambda}_{p,q}.
\end{equation*}
\end{lemma}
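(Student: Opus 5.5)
The plan is to treat the four ingredients of $\widehat{\Lambda}_{p,q}$ separately. The factor $\frac{p}{2}d_{p,q}$ and the term $(c_q^q-b)^{\frac{2-p}{q-2}}$ should behave continuously as $q\to 4^-$. The only delicate quantity is $\widetilde{c}_{p,q}$, for which an upper semicontinuity estimate will be enough:
\begin{equation*}
\limsup_{q\rightarrow 4^-}\widetilde{c}_{p,q}\leq \widetilde{c}_{p,4}=\psi_p(b).
\end{equation*}
Granting this, the liminf of $\widehat{\Lambda}_{p,q}$ is at least the value obtained by replacing $\widetilde{c}_{p,q}$ with $\psi_p(b)$ and letting the continuous parts converge. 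That value is exactly $\widehat{\Lambda}_{p,4}$.

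First, the continuous parts. From \eqref{dpq}, $d_{p,q}$ is an explicit expression in $q$ that is continuous on $(2,4]$. By \cite[Theorem 9]{afi}, already used in Proposition \ref{ineqq} and Lemma \ref{Lambdabound}, the map $q\mapsto c_q^q$ is continuous on $(2,4]$. Moreover $c_q^q-b>0$ for $q\in[q_0,4]$ by Proposition \ref{ineqq}. Since the exponent $\frac{2-p}{q-2}$ is continuous and $c_q^q-b$ stays bounded away from $0$ near $q=4$, we get $(c_q^q-b)^{\frac{2-p}{q-2}}\to (c_4^4-b)^{\frac{2-p}{2}}$, which is positive.

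Second, the key step: a comparison showing that $\mathcal{B}_q$ is contained in a set of the type appearing in the definition of $\psi_p$. Take $u\in\mathcal{B}_q$, so $\|u\|=1$ and $\|u\|_q^q>b$. By H\"older's inequality, $\|u\|_q^q\leq |\Omega|^{\frac{4-q}{4}}\|u\|_4^q$. Hence
\begin{equation*}
\|u\|_4^4>\beta_q:=\left(b\,|\Omega|^{-\frac{4-q}{4}}\right)^{\frac{4}{q}} .
\end{equation*}
Clearly $\beta_q\to b$ as $q\to 4^-$. Since $0<b<S^{-2}$, we have $\beta_q\in(0,S^{-2})$ for $q$ close to $4$. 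For such $q$ this gives
\begin{equation*}
\mathcal{B}_q\subseteq\{u\in\W:\ \|u\|=1,\ \|u\|_4^4>\beta_q\},\qquad\text{and therefore}\qquad \widetilde{c}_{p,q}\leq \psi_p(\beta_q).
\end{equation*}
The assumed continuity of $\psi_p$ at $b$ then yields $\limsup_{q\to4^-}\widetilde{c}_{p,q}\leq\lim_{q\to 4^-}\psi_p(\beta_q)=\psi_p(b)$. This holds regardless of whether $\beta_q$ approaches $b$ from above or below.

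Finally, combining the two steps gives
\begin{equation*}
\liminf_{q\to4^-}\widehat{\Lambda}_{p,q}\geq \frac{p}{2}\cdot\frac{d_{p,4}}{\psi_p(b)^p\,(c_4^4-b)^{\frac{2-p}{2}}}=\widehat{\Lambda}_{p,4}.
\end{equation*}
I expect the only real obstacle to be the inclusion $\mathcal{B}_q\subseteq\{\|u\|_4^4>\beta_q\}$. This is where the hypothesis on $\psi_p$ enters: without continuity at $b$, the shift from $b$ to $\beta_q$ could produce a jump in the supremum. The rest is routine continuity bookkeeping.
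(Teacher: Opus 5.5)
Your proposal is correct and follows essentially the same route as the paper: the H\"older inclusion $\mathcal{B}_q\subseteq\{u\in\W:\ \|u\|=1,\ \|u\|_4^4>b^{4/q}|\Omega|^{-(4-q)/q}\}$, combined with the continuity of $\psi_p$ at $b$, gives $\limsup_{q\to 4^-}\widetilde{c}_{p,q}\leq \widetilde{c}_{p,4}$, and the continuity of $q\mapsto c_q^q$ and of $d_{p,q}$ then finishes the argument. The only difference is cosmetic: the paper proves the semicontinuity step by contradiction along sequences $\{q_n\},\{u_n\}$, whereas you bound $\widetilde{c}_{p,q}\leq\psi_p(\beta_q)$ directly.
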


\begin{proof}
We claim that
\begin{equation}\label{cpqtilde}
\widetilde{c}_{p,4}\geq \limsup_{q\rightarrow 4^-}\widetilde{c}_{p,q}.
\end{equation}
Indeed, if on the contrary we assume that
$$
\widetilde{c}_{p,4}<\limsup_{q\rightarrow 4^-}\widetilde{c}_{p,q},
$$
then there exist $\delta>0$ and two sequences $\{q_n\}\subset[q_0,4)$, $\{u_n\}\subset\W$, such that
$$
\|u_n\|=1, \quad \|u_n\|_{q_n}^{q_n}>b, \quad\text{and}\quad \widetilde{c}_{p,4}+\delta <\|u_n\|_p,
$$
for all $n\in \N$. Since $\|u_n\|_{q_n}^{q_n}>b$ implies $\|u_n\|_{4}^{4}>|\Omega|^{\frac{q_n-4}{q_n}}b^{\frac{4}{q_n}}$, by the continuity of $\psi_p$ at $b$ it follows that, for sufficiently large $n\in\N$,
$$
\|u_n\|_p\leq \sup_{\|u\|=1, \,
\|u\|_4^4>|\Omega|^{\frac{q_n-4}{q_n}}b^{\frac{4}{q_n}}}\|u\|_p<\widetilde{c}_{p,4}+\frac{\delta}{2}<\|u_n\|_p-\frac{\delta}{2},
$$
a contradiction. Therefore, \eqref{cpqtilde} holds and taking the continuity of the function $q\in [q_0,4]\mapsto c_q^q$ into account, the conclusion easily follows.
\end{proof}

\begin{lemma}\label{nehariinfmeno}
Let $p\in (1,2)$ and $\lambda<\min\{\Lambda_{p,q},\widehat{\Lambda}_{p,q}\}$.  Then,
\begin{equation}\label{infNmeno+}
\begin{split}
     I_{\lambda,q}(u)&\geq a\left(\frac{1}{2}-\frac{1}{q}\right)\left( 1-
      \lambda \widehat{\Lambda}_{p,q}^{-1}\right) \|u\|^2, \quad \text{and}\\
      \|u\|^2& > \left[\frac{a(2-p)}{(q-p)(c^q_q-b)}\right]^{\frac{2}{q-2}},%>0
    \end{split}
\end{equation}
    for each $u\in \mathcal{N}_{\lambda,q}^-$.
\end{lemma}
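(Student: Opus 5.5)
The plan is to work only with the two relations defining $\mathcal{N}_{\lambda,q}^-$ and to eliminate $\lambda$ first. Fix $u\in\mathcal{N}_{\lambda,q}^-$ and set $t:=\|u\|$ and $X:=\|u\|_q^q-bt^q$. The relations are $J_{\lambda,q}(u)=0$ and $J_{\lambda,q}'(u)(u)<0$, namely
\begin{equation*}
at^2-X-\lambda\|u\|_p^p=0,\qquad 2at^2-qX-\lambda p\|u\|_p^p<0.
\end{equation*}
I would substitute $\lambda\|u\|_p^p=at^2-X$ from the first relation into the second. This gives $(2-p)at^2<(q-p)X$. In particular $X>0$, so $w:=u/t$ satisfies $\|w\|_q^q>b$, i.e. $w\in\mathcal{B}_q$.

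For the second inequality in \eqref{infNmeno+}, I would use the definition \eqref{costanteimm} of $c_q$ to bound $X\le (c_q^q-b)t^q$. Here $c_q^q-b>0$ by Proposition \ref{ineqq}. Combining this with $(2-p)at^2<(q-p)X$ gives
\begin{equation*}
t^{q-2}>\frac{a(2-p)}{(q-p)(c_q^q-b)},
\end{equation*}
which is exactly the stated lower bound on $\|u\|^2$ after raising to the power $\frac{2}{q-2}$.

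For the first inequality, I would use $J_{\lambda,q}(u)=0$ to write
\begin{equation*}
I_{\lambda,q}(u)=a\left(\tfrac12-\tfrac1q\right)t^2-\lambda\left(\tfrac1p-\tfrac1q\right)\|u\|_p^p.
\end{equation*}
Since $w\in\mathcal{B}_q$, the definition of $\widetilde{c}_{p,q}$ gives $\|u\|_p^p\le \widetilde{c}_{p,q}^p\,t^p$. It then suffices to show
\begin{equation*}
\left(\tfrac1p-\tfrac1q\right)\widetilde{c}_{p,q}^p\,t^{p}\le a\left(\tfrac12-\tfrac1q\right)\widehat{\Lambda}_{p,q}^{-1}t^2 .
\end{equation*}
Inserting the definitions \eqref{lambdahatpq} and \eqref{dpq}, this reduces to
\begin{equation*}
t^{p-2}\le \frac{(q-2)a\,(c_q^q-b)^{\frac{2-p}{q-2}}}{(q-p)\,d_{p,q}}.
\end{equation*}
A direct simplification of the powers of $a/(q-p)$ in $d_{p,q}$ shows that the right-hand side equals $\left[\frac{(q-p)(c_q^q-b)}{a(2-p)}\right]^{\frac{2-p}{q-2}}$. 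This is precisely the bound on $t^{p-2}$ obtained by raising the second inequality to the power $-(2-p)/2<0$, so the required estimate holds, in fact strictly.

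The hypothesis $\lambda<\min\{\Lambda_{p,q},\widehat{\Lambda}_{p,q}\}$ is not needed for the algebra itself. The condition $\lambda<\widehat{\Lambda}_{p,q}$ makes the constant $1-\lambda\widehat{\Lambda}_{p,q}^{-1}$ positive. The condition $\lambda<\Lambda_{p,q}$ guarantees, via Lemma \ref{nehari}, that $\mathcal{N}_{\lambda,q}^-\neq\emptyset$, so the statement is not vacuous. The only step requiring care is the exponent bookkeeping in the last reduction, i.e. checking that the power of $a/(q-p)$ coming from $d_{p,q}$ combines with the factor $a/(q-p)$ into exactly $\left(a/(q-p)\right)^{-\frac{2-p}{q-2}}$. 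I expect no conceptual obstacle beyond this computation.
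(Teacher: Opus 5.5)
Your proposal is correct and follows the paper's proof essentially step for step: derive $a(2-p)\|u\|^2<(q-p)(\|u\|_q^q-b\|u\|^q)\le (q-p)(c_q^q-b)\|u\|^q$, deduce $u/\|u\|\in\mathcal{B}_q$ and the lower bound on $\|u\|$, then use $\|u\|_p^p\le \widetilde{c}_{p,q}^p\|u\|^p$ in the Nehari form of $I_{\lambda,q}$ and control $\|u\|^{p-2}$ via that lower bound. Your exponent bookkeeping is also correct: the threshold does simplify to $\bigl[(q-p)(c_q^q-b)/(a(2-p))\bigr]^{\frac{2-p}{q-2}}$, which is the identity the paper uses without comment in its final equality.
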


\begin{proof}
   Let $u\in \mathcal{N}_{\lambda,q}^-$. Then,
    \begin{equation}\label{inequ}
        \begin{split}
        & a\|u\|^2+b\|u\|^q -\|u\|_q^q-\lambda \|u\|_p^p=0, \quad \text{and}  \\
        &2a\|u\|^2+q\left( b\|u\|^q -\|u\|_q^q\right) -\lambda p\|u\|_p^p<0.
        \end{split}
    \end{equation}
    It follows that
    \begin{equation}\label{ineq4}
    a(2-p)\|u\|^2 < (q-p)(\|u\|_q^q-b\|u\|^q)\leq (q-p)(c^q_q-b)\|u\|^q.
    \end{equation}
    Therefore, $u/\|u\|\in \mathcal{B}_q$ and
    $$
    \|u\| > \left[\frac{a(2-p)}{(q-p)(c^q_q-b)}\right]^{\frac{1}{q-2}}.
    $$
    Consequently, in view \eqref{inequ}, we get
\begin{equation}\label{energyonNminus}
        \begin{split}
       I_{\lambda,q}(u) &=a\left(\frac{1}{2}-\frac{1}{q}\right)\|u\|^2-\lambda \left(\frac{1}{p}-\frac{1}{q}\right)\|u\|_p^p \\
       &\geq a\left(\frac{1}{2}-\frac{1}{q}\right)\|u\|^2-
      \lambda\widetilde{c}_{p,q}^{p}\left(\frac{1}{p}-\frac{1}{q}\right)\|u\|^p\\
      &= \left[a\left(\frac{1}{2}-\frac{1}{q}\right)-
      \lambda \widetilde{c}_{p,q}^p\left(\frac{1}{p}-\frac{1}{q}\right)\|u\|^{p-2}\right]\|u\|^2 \\
      & > \left[a\left(\frac{1}{2}-\frac{1}{q}\right)-
      \lambda \widetilde{c}_{p,q}^{p}\left(\frac{1}{p}-\frac{1}{q}\right)\left(\frac{a(2-p)}{(q-p)(c_q^q-b)}\right)^{\frac{p-2}{q-2}}\right]\|u\|^2\\
      &=a\left(\frac{1}{2}-\frac{1}{q}\right)\left( 1-
      \lambda \widehat{\Lambda}_{p,q}^{-1}\right) \|u\|^2\\
      &\geq a\left(\frac{1}{2}-\frac{1}{q}\right)\left( 1-\lambda \widehat{\Lambda}_{p,q}^{-1}\right) \left[\frac{a(2-p)}{(q-p)(c^q_q-b)}\right]^{\frac{2}{q-2}}.
      \end{split}
\end{equation}
\end{proof}

For all $R>0$, define
\begin{align*}
\widehat{\mathcal{A}}_{\lambda,q}&:=\left\lbrace u\in\mathcal{A}_{\lambda,q}\setminus\{0\}: K_{\lambda,q}(u)\leq 0\right\rbrace,\\
C_{q,R} &:=\left\lbrace u\in\W:\left\|u\right\|_q\leq R \right\rbrace,
\end{align*}
where $\mathcal{A}_{\lambda,q}$ is the set introduced in \eqref{Alambda} and $K_{\lambda,q}$  is the functional introduced in \eqref{K}.
\\[2mm]
Similar to \eqref{eqN}, we can notice that
\begin{equation}\label{nminus}
\mathcal{N}_{\lambda,q}^- = \mathcal{N}_{\lambda,q}\cap\widehat{\mathcal{A}}_{\lambda,q},
\end{equation}
and so, if $\lambda <\Lambda_{p,q}$, by Lemma \ref{nehari} one has $\widehat{\mathcal{A}}_{\lambda,q}\neq \emptyset$. We now prove a result analogous to the one of Lemma \ref{astar} for $\widehat{\mathcal{A}}_{\lambda,q}$.

\begin{lemma}\label{ahat}
Let $q\in [q_0,4)$, $q'\in [q,4)$, $R>0$, and $\lambda<\Lambda_{p,q}$. The set $\widehat{\mathcal{A}}_{\lambda,q}$ is sequentially weakly closed, and the set $\widehat{\mathcal{A}}_{\lambda,q}\cap C_{q',R}$ is sequentially weakly compact.
\end{lemma}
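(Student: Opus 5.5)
The proof splits into two parts: weak closedness of $\widehat{\mathcal{A}}_{\lambda,q}$, and then compactness of its intersection with $C_{q',R}$.

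\textbf{Weak closedness.} Let $\{u_n\}\subset\widehat{\mathcal{A}}_{\lambda,q}$ with $u_n\rightharpoonup u$ in $\W$. Since $q<4$, the maps $u\mapsto\|u\|_q^q$ and $u\mapsto\|u\|_p^p$ are sequentially weakly continuous. The norm is weakly lower semicontinuous. Hence $J_{\lambda,q}$ is sequentially weakly lower semicontinuous, and $J_{\lambda,q}(u)\le\liminf J_{\lambda,q}(u_n)\le 0$. As noted before Lemma \ref{astar}, $K_{\lambda,q}$ is sequentially weakly continuous, so $K_{\lambda,q}(u)=\lim K_{\lambda,q}(u_n)\le 0$.

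The delicate point is excluding $u=0$. I would show that $\widehat{\mathcal{A}}_{\lambda,q}$ stays away from $0$ in a weakly continuous quantity. Set $y_n:=\Psi_{\lambda,q}(u_n)$ and $s_n:=f_q(y_n)$, so that $as_n^2+bs_n^q=y_n$. Then $K_{\lambda,q}(u_n)\le0$ reads
$$2as_n^2+qbs_n^q\le q\|u_n\|_q^q+\lambda p\|u_n\|_p^p.$$
Subtracting $p$ times the identity $as_n^2+bs_n^q=\|u_n\|_q^q+\lambda\|u_n\|_p^p$ gives
$$(2-p)as_n^2+(q-p)bs_n^q\le (q-p)\|u_n\|_q^q.$$
Moreover $\|u_n\|_q^q\le y_n=as_n^2+bs_n^q$, so $\|u_n\|_q^q\le C s_n^2$ when $s_n\le1$. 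On the other hand, $\|u_n\|_q^q\le c_q^q\|u_n\|^q\le c_q^q s_n^q$, because $J_{\lambda,q}(u_n)\le0$ gives $\|u_n\|\le f_q(\Psi_{\lambda,q}(u_n))=s_n$ as in \eqref{ineqfq}. Combining the two bounds,
$$(2-p)as_n^2\le (q-p)c_q^q s_n^q,$$
so $s_n\ge \left[\frac{a(2-p)}{(q-p)c_q^q}\right]^{1/(q-2)}>0$, since $u_n\ne0$ and hence $s_n>0$. This lower bound on $s_n$ passes to the limit, because $s_n\to f_q(\Psi_{\lambda,q}(u))$ by weak continuity of $\Psi_{\lambda,q}$ and continuity of $f_q$. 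Therefore $\Psi_{\lambda,q}(u)>0$, so $u\ne0$ and $u\in\widehat{\mathcal{A}}_{\lambda,q}$. I expect this nondegeneracy step to be the only real obstacle.

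\textbf{Compactness.} The set $C_{q',R}$ is convex and strongly closed in $\W$, hence weakly closed. So $\widehat{\mathcal{A}}_{\lambda,q}\cap C_{q',R}$ is sequentially weakly closed, and by reflexivity it suffices to show it is bounded in $\W$. For $u$ in this set, $J_{\lambda,q}(u)\le 0$ gives
$$a\|u\|^2\le \|u\|_q^q+\lambda\|u\|_p^p.$$
Since $q\le q'$ and $\Omega$ is bounded, Hölder's inequality yields $\|u\|_q\le|\Omega|^{\frac1q-\frac1{q'}}\|u\|_{q'}\le C R$. Also $\lambda\|u\|_p^p\le\lambda c_p^p\|u\|^p$ with $p<2$. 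Hence $a\|u\|^2\le C'+\lambda c_p^p\|u\|^p$, which bounds $\|u\|$. Bounded sequences have weakly convergent subsequences, and their limits stay in the set by weak closedness, which gives sequential weak compactness.
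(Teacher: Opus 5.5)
Your proof is correct and follows essentially the same route as the paper: weak lower semicontinuity of $J_{\lambda,q}$ and weak continuity of $K_{\lambda,q}$ give the two inequalities in the limit. Nondegeneracy comes from combining $K_{\lambda,q}(u_n)\le 0$ with the identity $af_q(\Psi_{\lambda,q}(u_n))^2+bf_q(\Psi_{\lambda,q}(u_n))^q=\Psi_{\lambda,q}(u_n)$ and with $f_q(\Psi_{\lambda,q}(u_n))\ge\|u_n\|$, which yields a uniform positive lower bound on a weakly continuous quantity. Compactness then follows from boundedness plus weak closedness of $C_{q',R}$.

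The only difference is that the paper bounds $\|u_n\|_q$ from below, whereas you bound $s_n=f_q(\Psi_{\lambda,q}(u_n))$ from below. Your intermediate inequality $(2-p)as_n^2+(q-p)bs_n^q\le(q-p)\|u_n\|_q^q$ is in fact the correct form of the paper's corresponding display, which has $(q-2)$ in place of $(q-p)$ and drops a factor $b$; this typo is harmless for the conclusion.
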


\begin{proof}
Let $\{u_n\}$ be a sequence in $\widehat{\mathcal{A}}_{\lambda,q}$ weakly converging to $u^*\in \W$. Let us show that $u^*\in
\widehat{\mathcal{A}}_{\lambda,q}$. For each $n\in \N$, one has
\begin{align}
        & a\|u_n\|^2+b\|u_n\|^q -\|u_n\|_q^q-\lambda \|u_n\|_p^p\leq 0, \quad \text{and}\label{un1}\\
        &2af_q(\Psi_{\lambda,q}(u_n))^2+qbf_q(\Psi_{\lambda,q}(u_n))^{q} -q\|u_n\|_q^q-\lambda p\|u_n\|_p^p\leq 0.\label{un2}
\end{align}
By using the sequential weak lower semicontinuity of $J_{\lambda,q}$ and the sequential weak continuity of $K_{\lambda,q}$, we get
%Passing to the limit as $n\rightarrow +\infty$
\begin{align*}
        & a\|u^*\|^2+b\|u^*\|^q -\|u^*\|_q^q-\lambda \|u^*\|_p^p\leq 0, \quad \text{and} \\
        &2af_q(\Psi_{\lambda,q}(u^*))^2+qbf_q(\Psi_{\lambda,q}(u^*))^q -q\|u\|_q^q-\lambda p\|u^*\|_p^p\leq 0,
    \end{align*}
and so, to prove that $u^*\in \widehat{\mathcal{A}}_{\lambda,q}$, it remains to show that $u^*\neq 0$. To this end, let $n\in \N$ and note that by the definition of $f_q$, one has
$$
af_q(\Psi_{\lambda,q}(u_n))^2+bf_q(\Psi_{\lambda,q}(u_n))^q=\Psi_{\lambda,q}(u_n)=\|u_n\|_q^q+\lambda \|u_n\|_p^p.
$$
%\blue{
%   [Miei conti.] Inserting this equality in \eqref{un2}, we get
%   $$
%   a(2-p)f_q(\Psi_{\lambda,q}(u_n))^2 + b(q-p)f_q(\Psi_{\lambda,q}(u_n))^q \leq (q-p)\|u_n\|_q^q.
%   $$
%   Observe also that by \eqref{un1} and the definition of $f_q$ one has
%   $$
%   f_q(\Psi_{\lambda,q}(u_n))\geq \|u_n\|.
%   $$
%   Consequently, one has
%   \begin{equation*}
%       a(2-p)c_q^{-2}\|u_n\|_q^2 \leq a(2-p)\|u_n\|^2 \leq (q-p)\|u_n\|_q^q
%   \end{equation*}
%   and being $u_n\neq 0$, it also follows
%   \begin{equation*}
%       \|u_n\|_q\geq \left[\frac{a(2-p)c_q^{-2}}{q-p}\right]^{\frac{1}{q-2}}.
%   \end{equation*}
%   Passing to the limit as $n\rightarrow +\infty$ we get $u^*\neq 0$.
%}
Inserting this equality in \eqref{un2}, we get
\begin{equation}\label{un3}
a(2-p)f_q(\Psi_{\lambda,q}(u_n))^2 \leq (q-2)(\|u_n\|_q^q-\|u_n\|^q)\leq (q-2)(1-bc_q^{-q})\|u_n\|_q^q.
\end{equation}
Observe also that by \eqref{un1} and the definition of $f_q$ one has
\begin{equation*}
f_q(\Psi_{\lambda,q}(u_n))\geq \|u_n\|.
\end{equation*}
Consequently, by \eqref{un3}, one has
\begin{equation*}
a(2-p)c_q^{-2}\|u_n\|_q^2 \leq a(2-p)\|u_n\|^2
\leq (q-2)(\|u_n\|_q^q-\|u_n\|^q)\leq (q-2)(1-bc_q^{-q})\|u_n\|_q^q,
\end{equation*}
and being $u_n\neq 0$, it also follows that
\begin{equation*}
\|u_n\|_q\geq \left[\frac{a(2-p)c_q^{-2}}{(q-2)(1-bc_q^{-q})}\right]^{\frac{1}{q-2}}.
\end{equation*}
Passing to the limit as $n\rightarrow +\infty$ we finally get
\begin{equation*}
\|u^*\|_q\geq \left[\frac{a(2-p)c_q^{-2}}{(q-2)(1-bc_q^{-q})}\right]^{\frac{1}{q-2}}>0,
\end{equation*}
that is $u^*\neq 0$.

Now, let $R>0$ and $q'\in [q,4)$. It is immediate to check that that $\widehat{\mathcal{A}}_{\lambda,q}\cap C_{q',R}$ is a bounded set. Moreover, the set $C_{q',R}$ is, like $\widehat{\mathcal{A}}_{\lambda,q}$, sequentially weakly closed. We conclude that $\widehat{\mathcal{A}}_{\lambda,q}\cap
C_{q',R}$ is sequentially weakly compact.
\end{proof}

\medskip

Let $\delta>0$, $x_0\in \Omega$ be such that $\overline{B}(x_0,\delta):=\{x\in\R^4:|x-x_0|\leq\delta\}\subset\Omega$ and let $\varphi\in C^\infty_0(\Omega)$ be such that
$\varphi\geq 0$, $\varphi=1$ in $\overline{B}(x_0,\delta)$. For each $\varepsilon>0$, consider the functions $v_\varepsilon,u_\varepsilon:\Omega\to\R$ defined by
\begin{equation}\label{truncinstanton}
v_\varepsilon(x):=\frac{\varphi(x)}{\varepsilon+|x-x_0|^2}, \quad \text{for all } x\in\Omega, \qquad
u_\varepsilon:=\frac{v_\varepsilon}{\left\|v_\varepsilon\right\|}.
\end{equation}
It is known that, as $\varepsilon\to 0^+$, the following estimates hold:
\begin{equation}\label{stimevepsilon}
    \left\|v_\varepsilon\right\|^2= \frac{C}{\varepsilon} + O(1), \quad S^{-2}=S^{-2}\left\| u_\varepsilon\right\|^4 \geq \left\|u_\varepsilon \right\|_4^4 =S^{-2} + O(\varepsilon),
\end{equation}
where $C>0$ is a suitable constant (see \cite{bn}).

\begin{lemma}
There exists $\varepsilon^*=\varepsilon^*(p)>0$ and two constants $C_1,C_2>0$ such that
\begin{equation}\label{normaLpuepsilon}
    C_1\varepsilon^\frac{p}{2} \leq \left\| u_\varepsilon\right\|_p^p \leq C_2 \varepsilon^\frac{p}{2}
\end{equation}
for each $\varepsilon \in (0,\varepsilon^*]$.
\end{lemma}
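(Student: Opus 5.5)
Since $u_\varepsilon=v_\varepsilon/\|v_\varepsilon\|$ and, by \eqref{stimevepsilon}, $\|v_\varepsilon\|^2=C\varepsilon^{-1}+O(1)$, one has $\|v_\varepsilon\|^{-p}\asymp \varepsilon^{p/2}$ for small $\varepsilon$. More precisely, there is $\varepsilon_0>0$ such that $\frac{C}{2\varepsilon}\le\|v_\varepsilon\|^2\le \frac{2C}{\varepsilon}$ for $\varepsilon\in(0,\varepsilon_0]$. Writing $\|u_\varepsilon\|_p^p=\|v_\varepsilon\|_p^p\,\|v_\varepsilon\|^{-p}$, the claim reduces to showing that $\|v_\varepsilon\|_p^p$ is bounded above and below by positive constants independent of $\varepsilon$ small. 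Then \eqref{normaLpuepsilon} follows with $C_1=(2C)^{-p/2}m$ and $C_2=(C/2)^{-p/2}M$, where $m\le\|v_\varepsilon\|_p^p\le M$.

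For the upper bound, note that $0\le v_\varepsilon(x)\le \frac{\|\varphi\|_\infty}{|x-x_0|^2}$ for every $\varepsilon>0$. Since $p<2$, we have $2p<4=N$, so $|x-x_0|^{-2p}$ is integrable on the bounded set $\Omega$. Hence
\[
\|v_\varepsilon\|_p^p\le \|\varphi\|_\infty^p\int_\Omega |x-x_0|^{-2p}\,dx=:M<+\infty ,
\]
uniformly in $\varepsilon$. (Alternatively, one can use monotone convergence: $v_\varepsilon\uparrow \varphi|x-x_0|^{-2}$ as $\varepsilon\to0^+$.)

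For the lower bound, restrict the integral to $B(x_0,\delta)$, where $\varphi=1$. For $\varepsilon\le 1$ we have $\varepsilon+|x-x_0|^2\le 1+\delta^2$, so
\[
\|v_\varepsilon\|_p^p\ge \int_{B(x_0,\delta)}\frac{dx}{(\varepsilon+|x-x_0|^2)^p}\ge \frac{|B(x_0,\delta)|}{(1+\delta^2)^p}=:m>0 .
\]
Taking $\varepsilon^*=\min\{1,\varepsilon_0\}$ then gives the statement.

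The only point needing care is the integrability of the upper envelope, which is exactly where $p<2$ (indeed $p<N/2$) is used. The rest is bookkeeping with the asymptotics \eqref{stimevepsilon} from \cite{bn}. No step is a real obstacle. One only has to make sure that $\varepsilon_0$ is chosen so that the $O(1)$ term is dominated by $C/(2\varepsilon)$.
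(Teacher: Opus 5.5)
Your proof is correct and follows the paper's approach. Both arguments bound $\|v_\varepsilon\|_p^p$ above and below by positive constants independent of small $\varepsilon$: the paper gets this from dominated convergence to $\int_\Omega(\varphi/|x-x_0|^2)^p\,dx$, while you give explicit constants. Both then divide by $\|v_\varepsilon\|^p\asymp\varepsilon^{-p/2}$ from \eqref{stimevepsilon}. Your factor $\|\varphi\|_\infty^p$ in the dominating function is slightly more careful than the paper's bound $v_\varepsilon^p\le|x-x_0|^{-2p}$, which tacitly assumes $\varphi\le 1$.
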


\begin{proof}
Since $N=4$ and $-2p>-4$, one has $v_\varepsilon^p\leq |(\cdot)-x_0|^{-2p}\in L^1(\Omega),$ for each $\varepsilon>0$, and then
$$
\lim_{\varepsilon\to 0^+}\left\| v_\varepsilon\right\|_p^p= \int_\Omega \left(\frac{\varphi(x)}{|x-x_0|^2}\right)^pdx.
$$
The conclusion then follows by the first estimate in \eqref{stimevepsilon}.
\end{proof}

Fix $\eta \in \left(0,\min\{1,S^{-2}-b\}\right)$ and for each $\varepsilon>0$ and $\lambda \in (0,\Lambda_*]$, where $\Lambda_*$ is as in \eqref{L*}, set $\varepsilon_\lambda:=\varepsilon\lambda^{\frac{2}{2-p}}$.

\begin{lemma}\label{boundh}
There exists $\bar{\varepsilon}\in (0,\varepsilon^*]$ such that
\begin{equation}\label{uel}
\frac{a}{\|v_{\varepsilon_\lambda}\|^2}+b+\eta<\|u_{\varepsilon_\lambda}\|_4^4,
\end{equation}
for each $\lambda \in (0,\Lambda_*]$ and $\varepsilon \in (0,\bar{\varepsilon}]$. Moreover, the non-negative function $h:(0,\bar{\varepsilon}]\times
(0,\Lambda_*]\rightarrow [0,+\infty)$ defined by
$$ h(\varepsilon,\lambda):=\frac{a^2}{4\varepsilon_\lambda}\left(\frac{1}{\left\|u_{\varepsilon_\lambda}
\right\|_4^4-b}-\frac{1}{S^{-2}-b}\right),
$$
is bounded in $(0,\bar{\varepsilon}]\times(0,\Lambda_*]$.
\end{lemma}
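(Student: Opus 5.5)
The key observation is that $\varepsilon_\lambda=\varepsilon\lambda^{\frac{2}{2-p}}$ ranges in $(0,\bar\varepsilon\Lambda_*^{\frac{2}{2-p}}]$ when $(\varepsilon,\lambda)\in(0,\bar\varepsilon]\times(0,\Lambda_*]$. Both \eqref{uel} and the boundedness of $h$ depend on $(\varepsilon,\lambda)$ only through the single variable $s:=\varepsilon_\lambda$. So the plan is to reduce everything to statements about $s\to 0^+$ and use the asymptotics \eqref{stimevepsilon}. I then choose $\bar\varepsilon$ so small that $s_0:=\bar\varepsilon\Lambda_*^{\frac{2}{2-p}}$ lies below the threshold where those asymptotics, and the bounds \eqref{normaLpuepsilon}, are valid. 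Since $\Lambda_*<+\infty$ by Lemma \ref{Lambdabound}, this is a legitimate choice, and it also guarantees $\bar\varepsilon\le\varepsilon^*$.

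For \eqref{uel}, by \eqref{stimevepsilon} we have $a/\|v_s\|^2=a s/C+O(s^2)\to 0$ and $\|u_s\|_4^4=S^{-2}+O(s)\to S^{-2}$ as $s\to 0^+$. Since $\eta<S^{-2}-b$, the limit of the left-hand side, $b+\eta$, is strictly smaller than the limit of the right-hand side, $S^{-2}$. Hence there is $s_1>0$ such that \eqref{uel} holds for all $s\in(0,s_1]$. I then take $\bar\varepsilon\le \min\{\varepsilon^*, s_1\Lambda_*^{-\frac{2}{2-p}}\}$, together with any further smallness needed below.

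For $h$, I write
$$h=\frac{a^2}{4s}\cdot\frac{S^{-2}-\|u_s\|_4^4}{(\|u_s\|_4^4-b)(S^{-2}-b)}.$$
The numerator $S^{-2}-\|u_s\|_4^4$ is nonnegative by the inequality in \eqref{stimevepsilon}, and it is $O(s)$, that is, at most $Ms$ for $s\le s_1$ (shrinking $s_1$ if needed). For the denominator, \eqref{uel} gives $\|u_s\|_4^4-b>\eta$. Therefore
$$0\le h\le \frac{a^2M}{4\eta(S^{-2}-b)}$$
uniformly on the domain. This shows that $h$ is nonnegative and bounded.

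The only delicate point is that the $O(\varepsilon)$ estimate in \eqref{stimevepsilon} must hold with a uniform constant on a fixed interval $(0,s_1]$, not merely asymptotically. I would take this from the Brezis–Nirenberg computation in \cite{bn}, where the bound is explicit for small $\varepsilon$. On $(0,s_1]$, continuity in $s$ handles any compact subrange, so the constant $M$ is indeed uniform.
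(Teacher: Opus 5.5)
Your proposal is correct and follows the paper's route. The paper's proof simply says the lemma is a straightforward consequence of \eqref{stimevepsilon}, and you make this explicit. You observe that both claims depend only on $s=\varepsilon_\lambda\in(0,\bar\varepsilon\Lambda_*^{\frac{2}{2-p}}]$, and you bound $h$ uniformly using $\|u_s\|_4^4-b>\eta$ (from \eqref{uel}) together with $0\le S^{-2}-\|u_s\|_4^4\le Ms$. Your closing worry about uniformity and compact subranges is unnecessary. The $O(\varepsilon)$ in \eqref{stimevepsilon} already means such a bound $Ms$ holds on some interval $(0,s_1]$, and you have already accounted for this by shrinking $s_1$.
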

\begin{proof}
The conclusion is a straightforward consequence of \eqref{stimevepsilon}.
\end{proof}

\begin{lemma}\label{eps}
There exists $\varepsilon\in (0,\bar{\varepsilon}]$ such that
\begin{equation}\label{defgamma}
\sup_{(\rho,t)\in (0,\bar{\varepsilon}]\times(0,\Lambda_*]}h(\rho,t)\varepsilon^{\frac{2-p}{2}}
<\frac{1}{p2^p}C_1\left(\frac{a}{S^{-2}-b}\right)^\frac{p}{2},
\end{equation}
\begin{equation}\label{defgamma1}
\frac{\Lambda_*^{\frac{2}{2-p}}}{p2^p}C_1\left(\frac{a}{S^{-2}-b}\right)^\frac{p}{2}\varepsilon^{\frac{p}{2}}<\frac{a^2}{8(S^{-2}-b)},
\end{equation}
and
\begin{equation}\label{t0}
\left(\frac{2(2-p)a}{q_0(q_0-p)}\right)^{\frac{1}{q_0-2}}\cdot \frac{\|v_{\varepsilon_\lambda}\|^{-1}}{\eta^{\frac{1}{q_0-2}}}<1,
\end{equation}
for each $\lambda \in (0,\Lambda_*]$.
\end{lemma}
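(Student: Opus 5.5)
The plan is to note that each of the three inequalities holds for all sufficiently small $\varepsilon>0$, uniformly in $\lambda\in(0,\Lambda_*]$. I would then take $\varepsilon$ below the minimum of the three thresholds and below $\bar\varepsilon$. The only structural input is that $\Lambda_*<+\infty$ (Lemma \ref{Lambdabound}) and $\lambda\le\Lambda_*$, so that $\varepsilon_\lambda=\varepsilon\lambda^{\frac{2}{2-p}}\le \varepsilon\Lambda_*^{\frac{2}{2-p}}$ for every admissible $\lambda$. Recall also that $\varepsilon_\lambda\to 0$ uniformly in $\lambda$ as $\varepsilon\to0^+$.

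For \eqref{defgamma}, I would invoke Lemma \ref{boundh}. Set $M:=\sup_{(\rho,t)\in(0,\bar\varepsilon]\times(0,\Lambda_*]}h(\rho,t)$, which is a finite nonnegative number that does not depend on $\varepsilon$. The right-hand side $\frac{1}{p2^p}C_1\left(\frac{a}{S^{-2}-b}\right)^{p/2}$ is a fixed positive constant, and since $\frac{2-p}{2}>0$, the inequality $M\varepsilon^{\frac{2-p}{2}}<\frac{1}{p2^p}C_1\left(\frac{a}{S^{-2}-b}\right)^{p/2}$ holds for all small $\varepsilon$. This is trivially true if $M=0$; otherwise it is equivalent to $\varepsilon<\bigl(\tfrac{1}{M}\cdot\tfrac{1}{p2^p}C_1(\tfrac{a}{S^{-2}-b})^{p/2}\bigr)^{\frac{2}{2-p}}$. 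Similarly, \eqref{defgamma1} has a fixed positive right-hand side $\frac{a^2}{8(S^{-2}-b)}$, while its left-hand side is a fixed constant times $\varepsilon^{p/2}$, which tends to $0$. Neither of these two inequalities involves $\lambda$ at all.

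The step needing some care is \eqref{t0}, because $\lambda$ enters through $\|v_{\varepsilon_\lambda}\|$ and the bound must be uniform in $\lambda$. By the first estimate in \eqref{stimevepsilon}, there is $s_0>0$ such that $\|v_s\|^2\ge \frac{C}{2s}$ for all $s\in(0,s_0]$. If $\varepsilon\le s_0\Lambda_*^{-\frac{2}{2-p}}$, then $\varepsilon_\lambda\le s_0$ for every $\lambda\in(0,\Lambda_*]$. Hence
$$
\|v_{\varepsilon_\lambda}\|^{-1}\le\left(\frac{2\varepsilon_\lambda}{C}\right)^{1/2}\le \left(\frac{2\varepsilon\Lambda_*^{\frac{2}{2-p}}}{C}\right)^{1/2}.
$$
The remaining factors in \eqref{t0} are fixed positive constants depending only on $a,p,q_0,\eta$. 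Therefore the left-hand side of \eqref{t0} is at most a constant times $\varepsilon^{1/2}$, and it is $<1$ for $\varepsilon$ small, uniformly in $\lambda$.

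Finally, I would choose $\varepsilon\in(0,\bar\varepsilon]$ smaller than the three thresholds found above. This $\varepsilon$ satisfies \eqref{defgamma}, \eqref{defgamma1} and \eqref{t0} simultaneously for all $\lambda\in(0,\Lambda_*]$. The main (mild) obstacle is only the uniformity in $\lambda$ in \eqref{t0}, and it is resolved by the monotone bound $\varepsilon_\lambda\le\varepsilon\Lambda_*^{\frac{2}{2-p}}$ together with the asymptotics of $\|v_s\|^2$.
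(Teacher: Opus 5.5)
Your proposal is correct and follows the same route as the paper, which simply states that the lemma follows from \eqref{stimevepsilon} and Lemma~\ref{boundh}. You also spell out the one point the paper leaves implicit: the uniformity of \eqref{t0} in $\lambda$, obtained from $\varepsilon_\lambda\le\varepsilon\Lambda_*^{\frac{2}{2-p}}$ and the bound $\|v_s\|^2\ge \frac{C}{2s}$ for small $s$.
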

\begin{proof}
The conclusion is a consequence of \eqref{stimevepsilon} and Lemma \ref{boundh}.
\end{proof}

\begin{lemma}\label{stimasuperioreinf}
Let $\lambda<\Lambda_{p,q}$. There exists $q_\lambda\in[q_0,4)$ and $R_\lambda>0$ such that for all $q\in [q_\lambda,4)$ and $q'\in (q,4)$ one has
$\mathcal{N}_{\lambda,q}^-\cap C_{q',R_\lambda}\neq \emptyset$ and
\begin{equation}\label{upperestiminf}
\inf_{\mathcal{N}_{\lambda,q}^-\cap \mathcal{C}_{q',R_\lambda}} I_{\lambda,q} < \frac{a^2}{4(S^{-2}-b)} -\gamma_\lambda,
\end{equation}
where $\gamma_\lambda>0$ depends only on $\lambda,p$.
\end{lemma}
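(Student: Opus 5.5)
We test the fibering map on the truncated instanton $u_{\varepsilon_\lambda}$ of \eqref{truncinstanton}, with $\varepsilon$ fixed as in Lemma \ref{eps}. The plan has three steps: first show that $u_{\varepsilon_\lambda}$ has a $\mathcal{N}^-$-projection in the critical case $q=4$ with energy strictly below $\frac{a^2}{4(S^{-2}-b)}$; then pass to $q$ close to $4$ by continuity; finally fix $R_\lambda$ so that the projections lie in $C_{q',R_\lambda}$.

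\textbf{Step 1 (critical exponent).} Write $u:=u_{\varepsilon_\lambda}$ (so $\|u\|=1$) and $A:=\|u\|_4^4-b$. By \eqref{uel}, $A>\frac{a}{\|v_{\varepsilon_\lambda}\|^2}+\eta>\eta$, so $b\|u\|^4<\|u\|_4^4$. Lemma \ref{nehari}$(ii)$ (with $q=4$, since $\lambda<\Lambda_{p,4}$ via \eqref{ineq3} or directly) then gives $t_2u\in\mathcal{N}^-_{\lambda,4}$, where $t_2$ is the global maximum point on $(0,\infty)$ of $\phi(t):=I_{\lambda,4}(tu)$. We estimate
$$
\phi(t)=\frac a2t^2-\frac A4t^4-\frac{\lambda}{p}t^p\|u\|_p^p\le \frac{a^2}{4A}-\frac{\lambda}{p}t^p\|u\|_p^p .
$$
Using $\frac{a^2}{4A}=\frac{a^2}{4(S^{-2}-b)}+\varepsilon_\lambda h(\varepsilon,\lambda)$ and \eqref{normaLpuepsilon}, which gives $\|u\|_p^p\ge C_1\varepsilon_\lambda^{p/2}=C_1\varepsilon^{p/2}\lambda^{p/(2-p)}$, it remains to bound $t_2$ from below. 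Since $t_2$ lies beyond the maximizer of $\frac a2t^2-\frac A4t^4$, up to the small $p$-correction we expect $t_2\ge \frac12\left(\frac{a}{S^{-2}-b}\right)^{1/2}$; this is where the factor $2^{-p}$ in \eqref{defgamma} comes from. We plan to get it from $J(t_2u)=0$ together with \eqref{t0}, which places $t_2$ above $t_0<1$.

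Multiplying out, $\lambda t_2^p\|u\|_p^p/p\ge \frac{C_1}{p2^p}\left(\frac{a}{S^{-2}-b}\right)^{p/2}\varepsilon^{p/2}\lambda^{2/(2-p)}$. This is because $\lambda\cdot\lambda^{p/(2-p)}=\lambda^{2/(2-p)}$, and the error term $\varepsilon_\lambda h=\varepsilon\lambda^{2/(2-p)}h$ carries the same power of $\lambda$. Hence \eqref{defgamma} yields
$$
I_{\lambda,4}(t_2u)\le \frac{a^2}{4(S^{-2}-b)}-2\gamma_\lambda, \qquad \gamma_\lambda:=\tfrac12\,\lambda^{\frac{2}{2-p}}\varepsilon^{\frac p2}\left[\frac{C_1}{p2^p}\Big(\frac{a}{S^{-2}-b}\Big)^{\frac p2}-\sup h\,\varepsilon^{\frac{2-p}{2}}\right]>0,
$$
and $\gamma_\lambda$ depends only on $\lambda$ and $p$. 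Condition \eqref{defgamma1} will be used to guarantee that this subtracted term is less than half of $\frac{a^2}{4(S^{-2}-b)}$, so the lower bound on $t_2$ is consistent.

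\textbf{Step 2 (continuity in $q$).} For fixed $u$, the coefficients $\|u\|_q^q\to\|u\|_4^4$ as $q\to4^-$, and $b\|u\|^q-\|u\|_q^q$ stays bounded away from $0$ with the correct sign. Hence the larger root $t_2(p,q,u)$ of the fiber equation converges to $t_2(p,4,u)$: it is a simple root with $g_u'<0$ there, so the implicit function theorem applies. It follows that $I_{\lambda,q}(t_2(q)u)\to I_{\lambda,4}(t_2u)$, and we choose $q_\lambda$ so that $I_{\lambda,q}(t_2(q)u)<\frac{a^2}{4(S^{-2}-b)}-\gamma_\lambda$ for all $q\in[q_\lambda,4)$. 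We also need $\lambda<\Lambda_{p,q}$ on this range in order to apply Lemma \ref{nehari}. The hypothesis refers to a single $q$; if it is meant for $\Lambda_{p,4}$, we use \eqref{ineq3} to shrink the interval accordingly.

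\textbf{Step 3 (choice of $R_\lambda$).} Set $R_\lambda:=2\sup_{q\in[q_\lambda,4)}t_2(q)\,c_4$. For $q'<4$ this gives $\|t_2(q)u\|_{q'}\le|\Omega|^{\frac1{q'}-\frac14}\,t_2(q)\,c_4$, which is uniformly bounded, so the projections lie in $C_{q',R_\lambda}$. This proves both nonemptiness and \eqref{upperestiminf}. The main obstacle is the lower bound on $t_2$ in Step 1, uniformly in $\lambda\in(0,\Lambda_*]$: we must show that the concave term does not push $t_2$ below $\frac12\sqrt{a/(S^{-2}-b)}$. We would try to obtain it by comparing $J(tu)$ with $at^2-At^4$ at $t=\frac12\sqrt{a/A}$, and using \eqref{defgamma1} to show that $J$ is still positive there.
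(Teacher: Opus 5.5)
There is a genuine gap in Step 1: everything rests on the lower bound $t_2\ge \underline{t}:=\frac12\sqrt{a/(S^{-2}-b)}$, which you leave unproved, and neither route you suggest delivers it.

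\textbf{Why the suggested routes fail.} Condition \eqref{t0} is an \emph{upper} bound. It says $t_0<1$ for the unnormalized $v_{\varepsilon_\lambda}$; the paper uses it to get $t_2<1$, hence the explicit $R_\lambda=|\Omega|+\|v_{\varepsilon_\lambda}\|_4^4$. For the normalized $u$ at $q=4$ one has $t_0=\bigl(\frac{2-p}{2(4-p)}\cdot\frac aA\bigr)^{1/2}$. This is smaller than $\underline t$ as soon as $A>\frac{2(2-p)}{4-p}(S^{-2}-b)$, i.e.\ for all small $\varepsilon$, so $t_2>t_0$ gives nothing.

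Your other route is to show $J_{\lambda,4}(su)>0$ at some $s\ge\underline t$. That needs an \emph{upper} bound on $\lambda\|u_{\varepsilon_\lambda}\|_p^p$, i.e.\ the constant $C_2$ in \eqref{normaLpuepsilon}. But \eqref{defgamma1} only controls the quantity built on $C_1$, and even with $C_1=C_2$ its constants do not match for $p>3/2$. So you would need an extra smallness condition on $\varepsilon$ that is not in Lemma \ref{eps}.

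The same issue affects the existence of a simple root $t_2$ at $q=4$, which your implicit-function argument in Step 2 requires. Citing \eqref{ineq3} goes the wrong way: it gives $\Lambda_{p,4}\le\liminf_{q\to4^-}\Lambda_{p,q}$, so $\lambda<\Lambda_{p,q}$ does not yield $\lambda<\Lambda_{p,4}$.

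\textbf{The missing idea.} You never need to know where $t_2$ lies; bound the whole fibre instead.
\begin{itemize}
\item For $t<\underline t$, one has $I_{\lambda,4}(tu)<\frac a2\underline t^{\,2}=\frac{a^2}{8(S^{-2}-b)}$. This is below $\frac{a^2}{4(S^{-2}-b)}-\gamma_\lambda$ precisely because \eqref{defgamma1} forces $\gamma_\lambda<\frac{a^2}{8(S^{-2}-b)}$. That is the actual role of \eqref{defgamma1}, not consistency of a lower bound on $t_2$.
\item For $t\ge\underline t$, your own estimate applies with $t^p\ge\underline t^{\,p}$.
\end{itemize}
Hence $\sup_{t>0}I_{\lambda,4}(tu_{\varepsilon_\lambda})<\frac{a^2}{4(S^{-2}-b)}-\gamma_\lambda$.

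The paper then first shows the fibre is negative for $t>\bar t$ uniformly in $q$. It passes to $q<4$ through continuity of $q\mapsto\max_{t\in[0,\bar t]}I_{\lambda,q}(tu_{\varepsilon_\lambda})$. It uses the $\mathcal N^-_{\lambda,q}$-projection $t_2v_{\varepsilon_\lambda}$ only for $q<4$, via $I_{\lambda,q}(t_2v_{\varepsilon_\lambda})\le\sup_{t>0}I_{\lambda,q}(tu_{\varepsilon_\lambda})$. No projection and no implicit function theorem at $q=4$ are needed.

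A minor point in Step 3: the factor $2$ should be $\max\{1,|\Omega|^{1/4}\}$ to absorb $|\Omega|^{\frac1{q'}-\frac14}$.
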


\begin{proof}
Let $\varepsilon>0$ be as in Lemma \ref{eps} and let $\lambda \in (0,\Lambda_{p,q})\subseteq (0,\Lambda_*)$. By \eqref{uel} we infer that
\begin{equation}\label{buv}
a\|v_{\varepsilon_\lambda}\|^2+b\|v_{\varepsilon_\lambda}\|^{4}<\|v_{\varepsilon_\lambda}\|_{4}^{4}.
\end{equation}
Moreover, by \eqref{defgamma} one has
\begin{equation}\label{gammal}
\begin{split}
\gamma_\lambda:&= \frac{\lambda}{p2^p}C_1 \varepsilon_\lambda
^\frac{p}{2}\left(\frac{a}{S^{-2}-b}\right)^\frac{p}{2}-\frac{a^2}{4(\left\|u_{\varepsilon_\lambda}
\right\|_4^4-b)}+\frac{a^2}{4(S^{-2}-b)} \\
&=\lambda\varepsilon_\lambda^{\frac{p}{2}}\left[\frac{C_1}{p2^p}\left(\frac{a}{S^{-2}-b}\right)^\frac{p}{2}-h(\varepsilon,\lambda)\varepsilon^{\frac{2-p}{2}}\right] >0,
\end{split}
\end{equation}
and by \eqref{defgamma1} and the non-negativity of $h$ one also has
\begin{equation*}
\gamma_\lambda\leq
\varepsilon^{\frac{p}{2}}\frac{\Lambda_*^{\frac{2}{2-p}}}{p2^p}C_1\left(\frac{a}{S^{-2}-b}\right)^\frac{p}{2}<\frac{a^2}{8(S^{-2}-b)}.
\end{equation*}
Therefore,
\begin{equation*}
\gamma_\lambda\in \left(0,\frac{a^2}{8(S^{-2}-b)}\right).
\end{equation*}
Now, note that since $\left\| v_{\varepsilon_\lambda}\right\|^q\to \left\| v_{\varepsilon_\lambda}\right\|^4$ and $\left\|
u_{\varepsilon_\lambda}\right\|^q_q\to \left\| u_{\varepsilon_\lambda}\right\|^4_4$ as $q\to 4^-$, by \eqref{buv} there exists
$q_{\varepsilon_\lambda}\in[q_0,4)$ such that
\begin{equation}\label{normv}
a\|v_{\varepsilon_\lambda}\|^2+(b+\eta)\left\|v_{\varepsilon_\lambda} \right\|^q < \left\| v_{\varepsilon_\lambda}\right\|_q^q< \left\|
v_{\varepsilon_\lambda}\right\|_q^q+\lambda \|v_{\varepsilon_\lambda}\|_p^p,
\end{equation}
for every $q\in[q_{\varepsilon_\lambda},4]$.

Let $q\in[q_{\varepsilon_\lambda},4]$ and consider the function $g_{v_{\varepsilon_\lambda}}$ defined in \eqref{fibermap}. By \eqref{normv} and Lemma \ref{nehari}, we know that there exist exactly two zeros $t_1,t_2>0$ (both depending on $q$) for $g_{v_{\varepsilon_\lambda}}$, with
$g_{v_{\varepsilon_\lambda}}$ negative in $(0,t_1)\cup(t_2,+\infty)$ and positive in $(t_1,t_2)$, and with
$$
t_1<\left(\frac{2(2-p)}{q(q-p)}\cdot
\frac{a\|v_{\varepsilon_\lambda}\|^2}{\|v_{\varepsilon_\lambda}\|_q^q-b\|v_{\varepsilon_\lambda}\|^q}\right)^{\frac{1}{q-2}}=\left(\frac{2(2-p)}{q(q-p)}\cdot
\frac{a\|v_{\varepsilon_\lambda}\|^{2-q}}{\|u_{\varepsilon_\lambda}\|_q^q-b}\right)^{\frac{1}{q-2}}<t_2.
$$
Moreover, observe that \eqref{t0} and $\eta\in (0,1)$ imply
$$
\left(\frac{2(2-p)}{q(q-p)}\cdot \frac{a\|v_{\varepsilon_\lambda}\|^{2-q}}{\|u_{\varepsilon_\lambda}\|_q^q-b}\right)^{\frac{1}{q-2}}\leq
\left(\frac{2(2-p)a}{q_0(q_0-p)}\right)^{\frac{1}{q_0-2}}\cdot \frac{\|v_{\varepsilon_\lambda}\|^{-1}}{\eta^{\frac{1}{q_0-2}}}<1.
$$
Consequently, since by \eqref{normv} one has $g_{v_{\varepsilon_\lambda}}(1)<0$, it cannot be $t_2>1$, otherwise being $g_{v_{\varepsilon_\lambda}}$
positive exactly in $(t_1,t_2)$, it should be
$$
1<t_1<\left(\frac{2(2-p)}{q(q-p)}\cdot \frac{a\|v_{\varepsilon_\lambda}\|^{2-q}}{\|u_{\varepsilon_\lambda}\|_q^q-b}\right)^{\frac{1}{q-2}}<1,
$$
a contradiction. Hence $t_2\in (0,1)$. Moreover, by Lemma \ref{nehari} we know that $t_2v_{\varepsilon_\lambda}\in \mathcal{N}_{\lambda,q}^-$, and
being $t_2\in (0,1)$ we also have
\begin{equation}\label{R0}
\|t_2v_{\varepsilon_\lambda}\|_{q'}^{q'}\leq \|v_{\varepsilon_\lambda}\|_{q'}^{q'}\leq |\Omega|+\|v_{\varepsilon_\lambda}\|_4^4=:R_\lambda %(1+|\Omega|)^2(1+\|v_{\varepsilon_\lambda}\|_4^4)=:R_\lambda,
\end{equation}
for all $q'\in [q_{\varepsilon_\lambda},4)$. %\blue{[ $\|v_{\varepsilon_\lambda}\|_{q'}^{q'}\leq |\Omega|+\|v_{\varepsilon_\lambda}\|_4^4=:R_\lambda$]?}

Now, let us consider the function $\phi:(0,+\infty)\times [q_{\varepsilon_\lambda},4]\rightarrow \R$ defined by
$$
\phi(t,q):=I_{\lambda,q}(tu_{\varepsilon_\lambda}) =\frac{a}{2}t^2 -\frac{1}{q}(\left\|u_{\varepsilon_\lambda}\right\|_q^q-b )t^q
-\frac{\lambda}{p}\left\|u_{\varepsilon_\lambda} \right\|_p^p t^p,
$$
for each  $(t,q) \in (0,+\infty)\times [q_{\varepsilon_\lambda},4]$. In light of \eqref{normv}, for $q\in [q_{\varepsilon_\lambda},4]$ one has
\begin{equation}\label{Ilambdabound}
\phi(t,q)<0 \quad\text{if} \quad t>\bar{t}:= \left( \frac{2a}{\eta}\right) ^{\frac{1}{q_{\varepsilon_\lambda}-2}}.
%\frac{2a}{\eta^{\frac{1}{q_{\varepsilon_\lambda}-2}}} \quad \blue{\left( \frac{2a}{\eta}\right) ^{\frac{1}{q_{\varepsilon_\lambda}-2}}}?
\end{equation}
Therefore,
\begin{equation}\label{Ilambdamax}
\sup_{t>0}\phi(t,q)=\max_{t\in [0,\bar{t}]}\phi(t,q).
\end{equation}
Moreover, for $t<\underline{t}:=\frac{\sqrt{a}}{2\sqrt{S^{-2}-b}}$ one has
\begin{equation}\label{Ilambdabound1}
\phi(t,4)<\frac{a^2}{8(S^{-2}-b)},
\end{equation}
and, recalling \eqref{normaLpuepsilon}, for  $t\in[\underline{t},\bar{t}]$, one also has
\begin{align*}
    \phi(t,4) &\leq  \sup_{t>0}\left[\frac{a}{2}t^2 -\frac{1}{4}(\left\|u_{\varepsilon_\lambda}\right\|_4^4-b )t^4\right]
    -\frac{\lambda}{p}C_1{\varepsilon_\lambda}^\frac{p}{2}\underline{t}^p\\
    &=\frac{a^2}{4( \left\|u_{\varepsilon_\lambda} \right\|_4^4-b)} -\frac{\lambda}{p}C_1{\varepsilon_\lambda}^\frac{p}{2}\underline{t}^p\\
    &=\frac{a^2}{4(S^{-2}-b)}-\gamma_\lambda.
\end{align*}
Hence, being $\gamma_\lambda < \frac{a^2}{8(S^{-2}-b)}$,  we obtain in view of \eqref{Ilambdabound}, \eqref{Ilambdamax} and \eqref{Ilambdabound1},
\begin{equation}\label{Ilambdabound2}
\max_{t>0} \phi(t,4)= \max_{t\in [0,\bar{t}]} \phi(t,4)< \frac{a^2}{4(S^{-2}-b)} -\gamma_\lambda.
\end{equation}
Now observe that since $\phi$ is continuous in $[0,\bar{t}]\times [q_{\varepsilon_\lambda},4]$ and $[0,\bar{t}]$ is compact, one has the continuity of the marginal function
$$
q\in [q_{\varepsilon_\lambda},4] \mapsto \max_{t\in [0,\bar{t}]}\phi(t,q).
$$
Consequently, from \eqref{Ilambdabound2}, there exists $q_{\lambda}\in [q_{\varepsilon_\lambda},4)$  such that
$$
\max_{t>0}I_{\lambda,q}(tu_{\varepsilon_\lambda})=\max_{t\in [0,\bar{t}]}\phi(t,q)< \frac{a^2}{4(S^{-2}-b)} -\gamma_{\lambda},
$$
for every $q\in [q_\lambda,4]$. Finally, if $R_\lambda$ is as in \eqref{R0} and if $q,q'\in [q_\lambda,4)$, with $q'>q$, one has
$$
\inf_{\mathcal{N}_{\lambda,q}^-\cap \mathcal{C}_{q',R_\lambda}}I_{\lambda,q}\leq I_{\lambda,q}(t_2v_{\varepsilon_\lambda})\leq
\max_{t>0}I_{\lambda,q}(tu_{\varepsilon_\lambda})<\frac{a^2}{4(S^{-2}-b)} -\gamma_\lambda.
$$
\end{proof}

\begin{lemma}\label{R}
Let $\lambda<\min\{\Lambda_{p,q},\widehat{\Lambda}_{p,q}\}$, let $q_\lambda$ be as in Lemma $\ref{stimasuperioreinf}$, let $q,q'\in [q_\lambda,4]$,
with $q'>q$, and let
$$
R=R(q,q',\lambda):=\max\left\{R_\lambda,\left(\frac{aqc_{q'}^2}{2(q-2)(S^{-2}-b)(1-\lambda \widehat{\Lambda}_{p,q}^{-1})}\right)^{\frac{1}{2}}\right\}.
$$
Then, $$
\inf_{\mathcal{N}_{\lambda,q}^{-}}I_{\lambda,q}=\inf_{\mathcal{N}_{\lambda,q}^{-}\cap \mathcal{C}_{q',R}}I_{\lambda,q}.
$$
\end{lemma}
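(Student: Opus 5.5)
The inequality $\inf_{\mathcal{N}_{\lambda,q}^{-}}I_{\lambda,q}\le \inf_{\mathcal{N}_{\lambda,q}^{-}\cap \mathcal{C}_{q',R}}I_{\lambda,q}$ is trivial, since the second set is smaller. It is also nonempty: $R\ge R_\lambda$, so $\mathcal{C}_{q',R_\lambda}\subseteq\mathcal{C}_{q',R}$, and Lemma \ref{stimasuperioreinf} applies. For the reverse inequality I would show that every $u\in\mathcal{N}_{\lambda,q}^-$ lying outside $\mathcal{C}_{q',R}$ has energy at least the threshold $\frac{a^2}{4(S^{-2}-b)}$. By Lemma \ref{stimasuperioreinf}, applied with $R_\lambda\le R$, the infimum over $\mathcal{N}_{\lambda,q}^-\cap\mathcal{C}_{q',R}$ lies strictly below $\frac{a^2}{4(S^{-2}-b)}-\gamma_\lambda$. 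So functions outside $\mathcal{C}_{q',R}$ cannot lower the infimum, and the two infima coincide.

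For the key estimate, let $u\in\mathcal{N}_{\lambda,q}^-$ with $\|u\|_{q'}>R$. By the definition \eqref{costanteimm} of $c_{q'}$ we have $\|u\|_{q'}\le c_{q'}\|u\|$, hence
\[
\|u\|^2> R^2c_{q'}^{-2}\ge \frac{aq}{2(q-2)(S^{-2}-b)(1-\lambda\widehat{\Lambda}_{p,q}^{-1})}.
\]
The first inequality of \eqref{infNmeno+} in Lemma \ref{nehariinfmeno} (valid since $\lambda<\min\{\Lambda_{p,q},\widehat{\Lambda}_{p,q}\}$) gives $I_{\lambda,q}(u)\ge a\frac{q-2}{2q}(1-\lambda\widehat{\Lambda}_{p,q}^{-1})\|u\|^2$. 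Inserting the lower bound on $\|u\|^2$ yields
\[
I_{\lambda,q}(u)> a\,\frac{q-2}{2q}\cdot\frac{aq}{2(q-2)(S^{-2}-b)}=\frac{a^2}{4(S^{-2}-b)}.
\]
This is exactly where the second entry in the definition of $R$ comes from.

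Combining the two steps, $\inf_{\mathcal{N}^-_{\lambda,q}}I_{\lambda,q}=\min\{\inf_{\mathcal{N}^-\cap\mathcal{C}_{q',R}},\inf_{\mathcal{N}^-\setminus\mathcal{C}_{q',R}}\}$. Here the second infimum is at least $\frac{a^2}{4(S^{-2}-b)}$, while the first is strictly smaller. Hence the minimum is the first infimum, which proves the claim.

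The one step needing care is the applicability of Lemma \ref{stimasuperioreinf}: it is stated for $q\in[q_\lambda,4)$ and $q'\in(q,4)$. The boundary case $q'=4$ allowed in the statement should be treated separately, for instance by noting that the constructed element $t_2v_{\varepsilon_\lambda}$ from that proof still lies in $\mathcal{C}_{4,R}$, since $\|t_2v_{\varepsilon_\lambda}\|_4^4\le R_\lambda$. Apart from this, the argument is a direct combination of earlier estimates and presents no real obstacle.
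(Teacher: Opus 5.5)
Your proof is correct and is essentially the paper's argument. The paper takes a minimizing sequence in $\mathcal{N}_{\lambda,q}^-$ lying below $\frac{a^2}{4(S^{-2}-b)}$, which Lemma \ref{stimasuperioreinf} makes available, and then uses the first estimate of Lemma \ref{nehariinfmeno} together with $\|u\|_{q'}\le c_{q'}\|u\|$ to place that sequence in $\mathcal{C}_{q',R}$; this is just the contrapositive of your energy bound outside $\mathcal{C}_{q',R}$, and your extra remark on the endpoint $q'=4$ addresses a point the paper passes over silently.
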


\begin{proof}
By Lemma \ref{stimasuperioreinf} we can find a minimizing sequence $\{u_n\}\subset \mathcal{N}_{\lambda,q}^{-}$ of $I_{\lambda,q}$ on
$\mathcal{N}_{\lambda,q}^{-}$  such that
$$
I_{\lambda,q}(u_n)<\frac{a^2}{4(S^{-2}-b)}, \quad \text{for each } n\in\N.
$$
By Lemma \ref{nehariinfmeno} we know that
$$
I_{\lambda,q}(u_n) \geq a\left(\frac{1}{2}-\frac{1}{q}\right)\left[1-
      \lambda \widehat{\Lambda}_{p,q}^{-1}\right]\|u_n\|^2\geq a\left(\frac{1}{2}-\frac{1}{q}\right)\left[1-
      \lambda \widehat{\Lambda}_{p,q}^{-1}\right]c_{q'}^{-2}\|u_n\|_{q'}^{2},
$$
and therefore
$$
\|u_n\|_{q'}^2\leq \frac{qa}{2(q-2)(S^{-2}-b)(1-\lambda \widehat{\Lambda}_{p,q}^{-1})}c_{q'}^2\leq R^2,
$$
that is $u_n\in \mathcal{C}_{q',R}$, for each $n\in \N$. This proves the Lemma.
\end{proof}

\medskip

\begin{proof}[Proof of Theorem \ref{solutionN-subcrit}]
Let $q,q'\in[q_\lambda,4)$, with $q'>q$, and let $R$ be as in Lemma \ref{R}.  By Lemma \ref{ahat} $\widehat{\mathcal{A}}_{\lambda,q}\cap
C_{q',R}$ is non-empty and sequentially weakly compact. Thus, the sequentially weakly continuous functional $\widetilde{I}_{\lambda,q}$ defined in
\eqref{Itilde}, restricted to $\widehat{\mathcal{A}}_{\lambda,q}\cap C_{q',R}$, admits a global minimum $u_{\lambda,q}\in
\widehat{\mathcal{A}}_{\lambda,q}\cap C_{q',R}$, which, of course, can be assumed non-negative. By an argument similar to the one of the proof of Theorem \ref{subcrit} (or rather, by exactly the same argument as in the proof of Lemma 10 of \cite{av}), one can show that $u_{\lambda,q} \in
\mathcal{N}_{\lambda,q}$. Consequently,  $u_{\lambda,q} \in \mathcal{N}_{\lambda,q}^{-}$ and, taking in mind \eqref{nminus}, one has:
$$
\widetilde{I}_{\lambda,q}(u_{\lambda,q})=\inf_{\mathcal{N}_{\lambda,q}^{-}\cap
C_{q',R}}\widetilde{I}_{\lambda,q}(u_{\lambda,q})=\inf_{\widehat{\mathcal{A}}_{\lambda,q}\cap C_{q',R}}\widetilde{I}_{\lambda,q}(u_{\lambda,q}).
$$
Since $\widetilde{I}_{\lambda,q}=I_{\lambda,q}$ on $\mathcal{N}_{\lambda,q}$, it follows, taking Lemma \ref{nehariinfmeno} and Lemma \ref{R} into account,
$$
I_{\lambda,q}(u_{\lambda,q})=\inf_{\mathcal{N}_{\lambda,q}^{-}\cap
C_{q',R}}I_{\lambda,q}(u_{\lambda,q})=\inf_{\mathcal{N}_{\lambda,q}^-}I_{\lambda,q}>0.
$$
Finally, a standard application of the Lagrange Multipliers Theorem shows that $u_{\lambda,q}$ is a non-zero and non-negative critical point of $I_{\lambda,q}$. Thus, by the Strong Maximum Principle, $u_{\lambda,q}$ is a solution of \eqref{problem}.
\end{proof}

Let $\varepsilon>0$ be as in Lemma \ref{eps} and put \begin{equation}\label{sigma}
\sigma:=a\left(\max\left\{\frac{S^p}{d_p}\varepsilon^{\frac{p(p-2)}{4}}, \frac{p2^p}{C_1}\left(H\cdot
\varepsilon^{\frac{2-p}{2}}+1\right)\right\}\right)^{-\frac{2}{p}},
\end{equation}
where
$$
d_p:=\frac{2^{3-p}}{(4-p)c_p^p}\left( \frac{p}{2-p}\right) ^{\frac{2-p}{2}}
$$
and
$$
H:=\sup_{(\rho,t)\in
(0,\bar{\varepsilon}]\times(0,\Lambda_*]}h(\rho,t).$$

\begin{proof}[Proof of Theorem \ref{soluzcriticN-}]
Suppose at first that the function $\psi_p$ introduced in Lemma \eqref{L*lim} is continuous at $b$. By \eqref{ineq3} of Lemma
\ref{Lambdabound} and Lemma \ref{L*lim} one has
$$ \lambda<\min\{\Lambda_{p,4},\widehat{\Lambda}_{p,4}\}\leq \liminf_{q\to 4^-}\min\{\Lambda_{p,q},\widehat{\Lambda}_{p,q}\}.
$$
Therefore, there exists a sequence $\{q_n\}\subset(q_0,4)$, with $q_n\to 4^-$, such that $$\lambda<\inf_{n\in
\N}\min\{\Lambda_{p,q_n},\widehat{\Lambda}_{p,q_n}\}.$$ Setting $I_n:=I_{\lambda,q_n}$, by Lemma \ref{nehariinfmeno}, Lemma \ref{stimasuperioreinf}
and Theorem \ref{solutionN-subcrit}, there exists $v_n:=v_{\lambda,q_n}\in \mathcal{N}_{n}^-:= \mathcal{N}_{\lambda,q_n}^-$ such that,
\begin{equation}\label{infN-}
\begin{split}
  \frac{a^2}{4(S^{-2}-b)}-\gamma_\lambda & >\inf_{\mathcal{N}_{n}^-}I_n= I_n(v_n)=a\left(\frac{1}{2}-\frac{1}{q_n}\right)\|v_n\|^2-\lambda\left(\frac{1}{p}-\frac{1}{q_n}\right)\|v_n\|_p^p\\
  &>a\left(\frac{1}{2}-\frac{1}{q_n}\right)(1-\lambda \widehat{\Lambda}_{p,q_n}^{-1})\|v_n\|^2,
\end{split}
\end{equation}
for each $n\in \N$. Clearly \eqref{infN-} implies the boundedness of the sequence $\{v_n\}$. Thus there exists a subsequence, still denoted by $\{v_n\}$, $l\in(0,+\infty)$ and $v_*\in\W$, such that
$$
\begin{array}{ll}
    \|v_n\|  \to l, & \smallskip\\
    v_n\rightharpoonup v_* \text{ in } \W, & \quad\text{ as } n\to +\infty. \smallskip\\
    v_n \rightarrow v_*  \text{ in } L^m(\Omega) \ \text{ for each } m\in [1,4), &
\end{array}
$$
Notice that the positivity of $l$ follows by Lemma \ref{nehariinfmeno} which implies that
\begin{equation}\label{llower}
l^2\geq \frac{2-p}{(4-p)(S^{-2}-b)}>0.
\end{equation}
By taking the limit as $n\to+\infty$ in the relation
$$
a\left\| v_n\right\|^2 + b\left\| v_n\right\|^{q_n}=\left\| v_n\right\|_{q_n}^{q_n}   +\lambda \left\| v_n\right\|_{p}^{p},
$$
we obtain
$$
al^2+bl^4 \leq S^{-2}l^4 +\lambda \left\|v_* \right\|_p^p,
$$
and hence
\begin{equation}\label{l2one}
l^2\geq\frac{a}{S^{-2}-b} - \frac{\lambda}{l^2(S^{-2}-b)}\left\|v_* \right\|_p^p.
\end{equation}
On the other hand, passing to the limit as $n\to +\infty$ in \eqref{infN-}, we get
$$
\frac{a}{4}l^2-\lambda\frac{4-p}{4p}\|v_*\|_p^p \leq \frac{a^2}{4(S^{-2}-b)}-\gamma_\lambda,
$$
which implies that
\begin{equation}\label{l2two}
l^2 \leq \frac{a}{S^{-2}-b} -\frac{4\gamma_\lambda}{a} +\lambda\frac{4-p}{ap}\|v_*\|_p^p.
\end{equation}
Notice that conditions \eqref{l2one} and \eqref{l2two} force $v_* \not\equiv 0$ in $\Omega$.

By retracing the proof of Theorem \ref{crit}, one infers that
\begin{eqnarray}\label{eqv*}
    0=(a+bl^2)\|v_*\|^2-\|v_*\|_4^4-\lambda \|v_*\|_p^p.
\end{eqnarray}
Moreover, there exist two nonnegative measures $d\mu,d\nu$, two sequences $\{\mu_k\}_{k\in \N}$, $\{\nu_k\}_{k\in \N}$ in $(0,+\infty)$, and a
sequence $\{x_k\}_{k\in \N}$ in $\overline{\Omega}$ such that
\begin{equation}\label{conv2}
    \left\{
    \begin{array}{ll}
        |\nabla v_n|^2 \rightarrow d\mu \geq |\nabla v_*|^2+\displaystyle\sum_{k\in A}\mu_k\delta_{x_k} &\text{weakly}^*-\text{ in the sense of measures},\\
        v_n^4 \rightarrow d\nu= v_*^4+\displaystyle\sum_{k\in A}\nu_k\delta_{x_k},  & \text{weakly}^*- \text{ in the sense of measures},
    \end{array}\right.
\end{equation}
with $\mu_k^2 S^{-2}\geq \nu_k>0$ for each $k\in A$, where $A\subseteq\N$ is an at most countable set. We claim that $A=\emptyset$. Indeed, assume by
contradiction that $A\neq\emptyset$ and fix $k\in A$. Then, via the same arguments as in the proof of Theorem \ref{crit}, we arrive at
\begin{equation}\label{l2lower}
    l^2\geq \frac{\|v_*\|^2 + aS^2}{1-bS^2}.
\end{equation}
%which together \eqref{llower} yields
%\begin{eqnarray*}
%&1-\lambda\widehat{\Lambda}_{p,4}^{-1}&  \leq  \left[\frac{a}{S^{-2}-b}-\frac{4\gamma}{a}\right]\cdot\frac{1-bS^2}{\left\| v_*\right\|^2 +
%aS^2}\\[2mm]& &\leq \frac{aS^2}{\|v^*\|^2+aS^2}-\frac{4\gamma}{a}\frac{1-bS^2}{\left\| v_*\right\|^2 + aS^2}\\[2mm]
%& &\leq 1-\frac{4\gamma}{a}\frac{1-bS^2}{\left\| v_*\right\|^2 + aS^2}
%\end{eqnarray*}
%from which
%$$
%\lambda \geq  \frac{4\gamma}{a}\frac{1-bS^2}{\left\| v_*\right\|^2 + aS^2}\widehat{\Lambda}_{p,4}.$$
Now, from \eqref{l2two} and \eqref{l2lower}, it follows
\begin{equation*}
\frac{\|v^*\|^2}{1-bS^2}\leq -\frac{4\gamma_\lambda}{a}+\lambda \frac{4-p}{ap}\|v^*\|_p^p\leq  -\frac{4\gamma_\lambda}{a}+\lambda
c_p^p\frac{4-p}{ap}\|v^*\|^p,
\end{equation*}
that implies
\begin{equation*}
\inf_{t>0}\left(\frac{t^2}{1-bS^2}-\lambda c_p^p \frac{4-p}{ap}t^p+\frac{4\gamma_\lambda}{a}\right)\leq 0.
\end{equation*}
Since the above infimum is attained at
$$
\bar{t}:=\left(\lambda\cdot \frac{(4-p)(1-bS^2)c_p^p}{2a}\right)^{\frac{1}{2-p}},
$$
it turns out
\begin{equation*}
\overline{t}^p\left(\frac{\bar{t}^{2-p}}{1-bS^2}-\lambda c_p^p \frac{4-p}{ap}\right)+\frac{4\gamma_\lambda}{a}\leq 0,
\end{equation*}
and plugging {\tiny }the value of $\bar{t}$ in the above inequality we get
\begin{equation*}
 [\lambda
c_p^p(4-p)]^{\frac{2}{2-p}}\left(\frac{1-bS^2}{2a}\right)^{\frac{p}{2-p}}\left(\frac{1}{p}-\frac{1}{2}\right)\geq 4\gamma_\lambda.
\end{equation*}
Therefore, recalling the definitions of $\sigma$ and $\gamma_\lambda$, given in \eqref{sigma} and \eqref{gammal}, respectively, and the fact that $b>S^{-2}-\sigma$, it follows
\begin{equation*}
\begin{split}
\lambda&\geq \frac{a^{\frac{p}{2}}d_p(\gamma_\lambda )^{\frac{2-p}{2}}}{(1-bS^2)^{\frac{p}{2}}}\\
&\geq \frac{a^{\frac{p}{2}}d_p}{S^p(S^{-2}-b)^{\frac{p}{2}}}\cdot \lambda\varepsilon^
{\frac{p(2-p)}{4}}\left[\frac{C_1}{p2^p}\frac{a^{\frac{p}{2}}}{(S^{-2}-b)^{\frac{p}{2}}}-\sup_{(\rho,t)\in (0,\bar{\varepsilon}]\times
(0,\Lambda_*]}h(\varepsilon,t)\varepsilon^{\frac{2-p}{2}} \right]^{\frac{2-p}{2}}\\
&>\lambda \frac{a^{\frac{p}{2}}d_p}{S^p}\sigma^{-\frac{p}{2}}\varepsilon^
{\frac{p(2-p)}{4}}\left[\frac{C_1a^{\frac{p}{2}}}{p2^p}\sigma^{-\frac{p}{2}}-H \varepsilon^{\frac{2-p}{2}} \right]^{\frac{2-p}{2}}\geq \lambda,
\end{split}
\end{equation*}
a contradiction. Thus, as in the proof of Theorem \ref{crit}, we infer that $v_n$ strongly converges to $v_*$ and that $v_{\lambda,4}:=v_*$ is a solution of $(P_{\lambda,4})$.

At this point, let $u\in \mathcal{N}_{\lambda,4}^-$. By \eqref{ineq4} we know that $b\|u\|^4<\|u\|_4^4$, and then, since $q_n\rightarrow 4$, we can assume  $b\|u\|^{q_n}<\|u\|_{q_n}^{q_n}$ for each $n\in \N$. Therefore, by Lemma \ref{nehari}, we can find
$$
t_n=t(q_n,u)>\left(\frac{{2}(2-p)}{q_n(q_n-p)}\cdot \frac{a\|u\|^2}{\|u\|_{q_n}^{q_n}-b\|u\|^{q_n}}\right)^{\frac{1}{q_n-2}}
$$
such that $t_nu\in \mathcal{N}_{\lambda,q_n}^-$. In particular, since
$$
a \|u\|^2 \tau^2 +(b\|u\|^{q_n}-\|u\|_{q_n}^{q_n})\tau^{q_n}-\lambda
\|u\|_p^p\tau^p<0, \quad \text{if } \tau> \left(\frac{a\|u\|^2}{\|u\|_{q_n}^{q_n}-b\|u\|^{q_n}}\right)^{\frac{1}{q_n-2}},
$$
one also has
$$
t_n\leq
\left(\frac{a\|u\|^2}{\|u\|_{q_n}^{q_n}-b\|u\|^{q_n}}\right)^{\frac{1}{q_n-2}}.
$$
Hence, the sequence $\{t_n\}$ turns out bounded and away from $0$,
and so we can assume that
$$
t_n\rightarrow t_*\in (0,+\infty).
$$
Then, passing  to the limit as $n\rightarrow +\infty$ in
\begin{align*}
& at_n^2 \|u\|^2-t_n^{q_n}(\|u\|^{q_n}_{q_n}-b\|u\|^{q_n})-\lambda t_n^p \|u\|_p^p=0,\\
&2at_n^2\|u\|^2-q_n t_n^{q_n}(\|u\|^{q_n}_{q_n}-b\|u\|^{q_n})-\lambda pt_n^p \|u\|_p^p < 0,
\end{align*}
we get
\begin{align*}
& a t_*^2 \|u\|^2-t_*^{4}(\|u\|^{4}_{4}-b\|u\|^{4})-\lambda t_*^p \|u\|_p^{p}=0\\
& 2at_*^2 \|u\|^2-4t_*^{4}(\|u\|^{4}_{4}-b\|u\|^{4})-\lambda p t_*^p \|u\|_p^p\leq 0.
\end{align*}
Since $\lambda<\Lambda_{p,4}$, by Lemma \ref{nehari} we know that the last inequality is strict, so that $t^*u\in \mathcal{N}_{\lambda,4}^-$. By the
proof of Lemma \ref{nehari}, we also know that there is a unique positive number  $t$ such that $tu\in \mathcal{N}_{\lambda,4}^-$.  Since $u\in
\mathcal{N}_{\lambda,4}^-$, we conclude that $t_*=1$.

After that, one has
$$
I_{\lambda,4}(u)=\liminf_{n\rightarrow +\infty}I_n(t_nu)\geq \liminf_{n\rightarrow +\infty}I_n(v_n)=I_{\lambda,4}(v_*).
$$
Being $u$ an arbitrary function of $\mathcal{N}_{\lambda,4}^-$, we infer that (recall Lemma \ref{nehariinfmeno}),
$$
\inf_{\mathcal{N}_{\lambda,4}^-}I_{\lambda,4}=I_{\lambda,4}(v_*)\geq \frac{a}{4}(1-\lambda\widehat{\Lambda}_{p,4}^{-1})\|v_*\|^2>0,
$$
which concludes the proof in the case $\psi_p$ is continuous at $b$.

Finally, let $b$ be any number in $(S^{-2}-\sigma,S^{-2})$. Since $\psi_p$  is non-increasing, it has at most a countable set of discontinuity points and, therefore, the set of points of $(S^{-2}-\sigma, S^{-2})$ at which $\psi_p$ is continuous is dense in the same set. Thus, we can fix a non-increasing sequence $\{b_n\}\subset (S^{-2}-\sigma,S^{-2})$ such that $b_n\rightarrow b$ and $\psi_p$ is continuous at each
$b_n$. Since $\Lambda_{p,4}$ and $\widehat{\Lambda}_{p,4}$ are non-decreasing at $b$ and $b_n\geq b$ for each $n\in \N$, we obtain
$$
\lambda<\min\{\Lambda_{p,4},\widehat{\Lambda}_{p,4}\}\leq \min\{\Lambda_{p,4,n},\widehat{\Lambda}_{p,4,n}\},
$$
where $\Lambda_{p,4,n}$ and $\widehat{\Lambda}_{p,4,n}$ are the numbers obtained replacing $b$ by $b_n$ in
$\Lambda_{p,4}$ and $\widehat{\Lambda}_{p,4}$, respectively. This means that for each $n\in\N$ we can find a critical point $v_n$ of the functional
$$
\widehat{I}_n(u):=\frac{1}{2}\|u\|^2+\frac{b_n}{4}\|u\|^4-\frac{1}{4}\|u\|_4^4-\frac{\lambda}{p} \|u\|_p^p
$$
that minimizes $\widehat{I}_n$ on 
$$
\widehat{\mathcal{N}}_n^-:=\{u\in \W\setminus\{0\}: \widehat{J}_n(u):=\widehat{I}_n'(u)(u)=0, \
\widehat{J}_n'(u)(u)<0\}.
$$ 
Arguing exactly as above, it follows that $v_n$ strongly converges to a critical point of $I_\lambda$ that minimizes
$I_{\lambda,4}$ on $\mathcal{N}_{\lambda,4}^-$.
\end{proof}

%%%%%%%%%%%%%%%%%%%%%%%%%%%%%%%%%%%
\section*{Acknowledgements}
The authors are members of the Gruppo Nazionale per l'Analisi Matematica, la Probabilità e le loro Applicazioni (GNAMPA) of
the Istituto Nazionale di Alta Matematica (INdAM). This paper was written within the INdAM-GNAMPA project CUP code E53C25002010001. 
%%%%%%%%%%%%%%%%%%%%%%%%%%%%%%%%%%%%%%%%%

\end{document}